\numberwithin{equation}{section}
\newtheorem{thm}[equation]{Theorem}
\newtheorem{lem}[equation]{Lemma}
\newtheorem{prop}[equation]{Proposition}
\theoremstyle{remark}
\newtheorem{remark}[equation]{Remark}
\newtheorem{definition}[equation]{Definition}
\newcommand{\abs}[1]{\lvert #1 \rvert}
\newcommand{\labs}[1]{\left| #1 \right|}
\newcommand{\norml}[2][2]{\abs{#2}_{#1}}
\newcommand{\scalarh}[2]{\langle #1 \,|\, #2 \rangle}
\newcommand{\normh}[1]{\lVert #1 \rVert}
\newcommand{\settc}[2]{\bigl\{\,#1 \bigm\vert #2\,\bigr\}}
\newcommand{\fp}{\hat{f}^{+}}
\newcommand{\fm}{\hat{f}^{-}}
\newcommand{\fpm}{\hat{f}^{\pm}}
\newcommand{\ffp}{F^{+}}
\newcommand{\ffpm}{F^{\pm}}
\newcommand{\Fp}{\mathcal{F}^{+}_{\lambda}}
\newcommand{\Fpm}{\mathcal{F}^{\pm}_{\lambda}}
\newcommand{\Ip}{I^{+}_{\lambda}}
\newcommand{\Imm}{I^{-}_{\lambda}}
\newcommand{\Ipm}{I^{\pm}_{\lambda}}
\newcommand{\vdp}{v_{\delta}}
\newcommand{\vdb}{\bar{v}_{\delta}}
\newcommand{\gp}[1]{\hat{g}^{+}_{#1}}
\newcommand{\gm}[1]{\hat{g}^{-}_{#1}}
\DeclareMathOperator{\dive}{div}
\newcounter{tesi}
\newenvironment{ipotesi}[1]%
	{%
		\begin{list}%
			{\textbf{\upshape{(#1\arabic{tesi})}}}%
			{%
				\usecounter{tesi}%
				\renewcommand{\thetesi}{\textbf{\upshape{(#1\arabic{tesi})}}}%
				\settowidth{\labelwidth}{\thetesi}%
				\setlength{\labelsep}{5pt}%
				\settowidth{\itemindent}{\thetesi\hskip\labelsep}%
				\setlength{\leftmargin}{0pt}%
			}%
	}%
	{\end{list}%
	           }%
\title[radial solutions for mean curvature operator]{Existence of
multiple radial solutions for nonlinear equation involving the mean
curvature operator in Lorentz-Minkowski space}
\author{Vittorio Coti Zelati}
\author{Xu Dong}
\author{Yuanhong Wei}
\keywords{mean curvature; Lorentz-Minkowski space; Born-Infeld theory;
variational method; non-smooth functional}
\subjclass{35J62 35B09 35J20}
\begin{document}

\begin{abstract}
	This paper devotes to the multiplicity of the radial solutions for
	the Dirichlet problem for nonlinear equations involving the mean
	curvature operator in Lorentz-Minkowski space
	\begin{equation*}
		-\dive (\frac{\nabla u}{\sqrt{1 - \abs{\nabla u}^{2}}}) =
		\lambda b(\abs{x})\abs{u}^{q-2}u + f(\abs{x}, u) \qquad
		\text{in } B_{R},
	\end{equation*}
	in case $q \in (1,2)$ and $f(\abs{x}, s)$ is superlinear in $s$.

	Solutions are found using Szulkin's critical point theory for
	non-smooth functional. Multiplicity results are also given for
	some cases in which $f$ depends also on (the absolute value of)
	the gradient of $u$.
\end{abstract}

\maketitle

\section{Introduction}

This paper is about existence of multiple radial solutions for
nonlinear equations involving the mean curvature operator in
Lorentz-Minkowski space
\begin{equation}
	\label{MCE}
	\begin{cases}
		-\dive (\frac{\nabla u}{\sqrt{1 - \abs{\nabla u}^{2}}}) =
		\lambda b(\abs{x})\abs{u}^{q-2}u + f(\abs{x}, u) & \text{in }
		B_{R}, \\
		u = 0& \text{on }\partial B_{R}
	\end{cases}
\end{equation}
where $B_{R}$ is the open ball centered at $0$ of radius $R$ in the
Euclidean space $\mathbb{R}^{N}$, $N \geq 3$, $\lambda > 0$ is a real
parameter, $1 < q < 2$. Here $\abs{\cdot}$ stands for the Euclidian
norm in $\mathbb{R}^{N}$.

Under this ansatz $u(\abs{x})$, where $u \colon [0,+\infty) \to
\mathbb{R}$ and letting $r = \abs{x}$ the equation \eqref{MCE} reduces
to the one-dimensional mixed boundary problem
\begin{equation}
	\label{RMCE}
	\tag{BP}
	\begin{cases}
		-\left( \frac{r^{N-1} u'}{\sqrt{1 - \abs{u'}^{2}}}
		\right)' = r^{N-1} \lambda b(r) \abs{u}^{q-2} u + r^{N-1}
		f(r,u) &r \in (0,R), \\
		u'(0) = u(R) = 0 & 
	\end{cases}.
\end{equation}

The operator $\mathcal{Q} \colon \mathbb{R}^{N} \to \mathbb{R}$,
\begin{equation*}
	\mathcal{Q}(u):= - \dive(\frac{\nabla u}{\sqrt{1-\abs{\nabla
	u}^{2}}}),
\end{equation*}
is related to the mean curvature of space-like hypersurfaces in
Lorentz-Minkowski space and to some problems in Physics, see for
example in \cite{Cheng_Yau_1976, Flaherty_1979} for application in
geometry and \cite{Bonheure_dAvenia_Pomponio_2016} where its relation
to the Born-Infeld theory is explained.

The study of the nonlinear equations involving the operator
$\mathcal{Q}$ has received a lot of attention after the publication of
the paper \cite{Bereanu_Jebelean_Mawhin_2009} by Bereanu, Jebelean,
Mawhin and since then several different methods have been used to
tackle different nonlinear problems involving such and operator.

Fixed point theorems have been used to prove existence of radial
solutions for both Dirichlet and Neumann problems with different kind
of nonlinearities in \cite{Bereanu_Jebelean_Mawhin_2009,
Bereanu_Jebelean_Mawhin_2010, Bereanu_Jebelean_Torres_2013,
Bereanu_Jebelean_Torres_2013a}.

The same authors have then introduced a variational approach to the
problem to prove, using critical point theory for nonsmooth
functionals the existence radial solutions for the Neumann problem in
\cite{Bereanu_Jebelean_Mawhin_2011a, Bereanu_Jebelean_Mawhin_2013}. In
\cite{Bereanu_Jebelean_Mawhin_2014} they prove, with similar
techniques, existence of (not radial) solutions for the Dirichlet
problem.

Also using critical point theory for nonsmooth functionals Bonheure,
d'Avenia and Pomponio in \cite{Bonheure_dAvenia_Pomponio_2016} prove
existence and uniqueness of the weak solution for the electrostatic
Born-Infeld equation
\begin{equation*}
	\begin{cases}
		- \dive(\frac{\nabla u}{\sqrt{1-\abs{\nabla u}^{2}}}) = \rho&
		\text{in } \mathbb{R}^{N} \\
		\lim_{\abs{x} \to +\infty} u(x) = 0
	\end{cases}
\end{equation*}
where $\rho$ is an assigned extended charge density.

Motivated in particular by the results of Coelho, Corsato, and
Rivetti, who in the paper \cite{Coelho_Corsato_Rivetti_2014} prove
existence of one, two or three positive radial solutions with a
nonlinearity similar to that in \eqref{MCE}, we prove here for that
equation some existence and multiplicity of radial solutions. Due to
the lack of regularity of the operator $\mathcal{Q}$, we are lead to
study existence of critical points for a non-smooth functional
following Szulkin \cite{Szulkin_1986} in a way related to what has
been done in \cite{Bereanu_Jebelean_Mawhin_2014}. This approach is
different from the one used by Coelho, Corsato, and Rivetti in
\cite{Coelho_Corsato_Rivetti_2014}, where an equivalent, non-singular
problem is introduced and studied.

In our Theorem \ref{thm:ex6} we show existence of 3 positive and 3
negative solutions for equation \eqref{MCE}. This result is similar to
the one in \cite{Coelho_Corsato_Rivetti_2014} (from which one can
easily deduce also existence of negative solutions), although with a
(slightly) different proof (for a more precise comparison of the
results see Remark \ref{rem:coelhoetc}). Our main contributions are
the existence of a seventh solution, see Theorem \ref{thm:ex7},
applying in this situation an idea of Ambrosetti, Garcia Azorero and
Peral in \cite{Ambrosetti_GarciaAzorero_Peral_1996}, and the existence
of multiple solutions in case the nonlinear term depends also from the
norm of the gradient of the solution, see Theorem \ref{thm:thg}.

Let $F(r,s) = \int_{0}^{s} f(r,t) \, dt$. On $f$ and $b$ we will
assume that
\begin{ipotesi}{F}
	\item\label{it:fC} $f \in C([0, R] \times [-R, R],
	\mathbb{R})$;

	\item\label{it:fat0} $\lim_{s \to 0} \frac{f(r,s)}{s} = 0$
	uniformly for $r \in [0, R]$;

	\item\label{it:AR}
	$sf (r, s) > 0$ for every $0 < \abs{s} \leq R$ and $r \in [0, R]$;

	\item\label{it:fsuper} $F(r,s) \geq a_{1}\abs{s}^{\theta} - a_{2}$
	for all $(r,s) \in [0,R] \times [-R, R]$ for some $\theta > 2$ and
	$a_{1} > 0$, $a_{2} \geq 0$ such that
	\begin{equation}
		\label{eq:stimaaa}
		(1 + a_{2}) \frac{1}{N} - a_{1} R^{\theta}
		\frac{\Gamma(N)\Gamma(\theta+1)}{\Gamma(N+\theta+1)} \leq -1
	\end{equation}
	(here $\Gamma$ denotes the usual Gamma funtion);
	
	\item\label{it:asymmetry} there is $\sigma > 0$ and $\bar{\theta} 
	> 4$ such that $\abs{F(r, -s) - F(r, s)} \leq
	\abs{s}^{\bar{\theta}}$ for all $(r,s) \in [0,R] \times
	[-\sigma,\sigma]$;
	
	\item\label{it:fprimo} $f \in C^{1}([0, R] \times [-R, R],
	\mathbb{R})$ and there is $\sigma > 0$ and $\tilde{\theta} > 2$
	such that $\abs{f'(r, s)} \leq c \abs{s}^{\tilde{\theta}-2}$ for
	all $(r,s) \in [0,R] \times [-\sigma,\sigma]$,
\end{ipotesi}
and 
\begin{ipotesi}{B}
	\item\label{it:b} $b \in C([0,R], \mathbb{R})$ is such that $0 <
	b_{0} \leq b(r) \leq b_{1}$ for some real constants $b_{0}$ and
	$b_{1}$. 
\end{ipotesi}

\begin{remark}
	The assumption \ref{it:fsuper} is satisfied for all $R$ large
	enough if $F(r,s) \geq a_{1}\abs{s}^{\theta} - a_{2}$ for all
	$(r,s) \in [0,+\infty) \times \mathbb{R}$. It implies that the
	nonlinear term $f(r,s)$ is superlinear. It can also be satisfied
	taking if $f$ is multiplied by a large constant $\mu$ as in the
	paper \cite{Coelho_Corsato_Rivetti_2014}, see remark
	\ref{rem:coelhoetc}.
	
	The assumption \ref{it:asymmetry} tells us that the function $F$ 
	is close to be an even function in $s$ when $s$ is small. 
	
	The assumption \ref{it:fprimo} implies \ref{it:fat0}, and we can 
	assume $\tilde{\theta} \in (2, \frac{2N}{N-2})$.
	
	From the validity of the well known Ambrosetti-Rabinowitz
	condition $0 < \theta F(r,s) \leq sf(r, s)$ for all $r \in
	[0,+\infty)$ and $s \in \mathbb{R} \setminus \{0\}$ follows that
	\ref{it:AR} and \ref{it:fsuper} hold provided $R$ is large enough.
	From such an assumption also follows that $F(r,s) \leq
	c\abs{s}^{\theta}$ for $\abs{s} \leq 1$ and $r \in [0,R]$ .
\end{remark}

\begin{thm}
	\label{thm:ex6}
	Let $f$ satisfy \ref{it:fC}, \ref{it:fat0}, \ref{it:AR},
	\ref{it:fsuper} and assume \ref{it:b} holds. Then 
	\begin{itemize}
		\item For $\lambda > 0$ \eqref{RMCE} has one positive solution
		$u_{+}$ and one negative solution $u_{-}$;
	
		\item There is $\lambda_{*}(R)$ such that for all $\lambda \in
		(0, \lambda_{*}(R)) \eqref{RMCE}$ has one additional positive
		solution $w_{+}$ and one additional negative solution $w_{-}$
		of mountain pass type;
	
		\item There is $\lambda_{**} \in (0, \lambda_{*}(R))$ such
		that for all $\lambda \in (0, \lambda_{**}) \eqref{RMCE}$ has
		third positive solution $v_{+} $ and third negative solution
		$v_{-}$.
	\end{itemize}
\end{thm}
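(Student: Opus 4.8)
\emph{The plan.} I would set up \eqref{RMCE} variationally as the search for critical points, in the sense of Szulkin, of
\[
\mathcal I_\lambda(u)=\int_0^R r^{N-1}\bigl(1-\sqrt{1-\abs{u'}^2}\bigr)\,dr-\frac{\lambda}{q}\int_0^R r^{N-1}b(r)\abs{u}^q\,dr-\int_0^R r^{N-1}F(r,u)\,dr
\]
(the radial energy, equal up to the factor $\abs{\partial B_1}$ to the energy on $B_R$) on $X=\{u\colon[0,R]\to\mathbb R: u(R)=0,\ \int_0^R r^{N-1}\abs{u'}^2\,dr<\infty\}$. Writing $\mathcal I_\lambda=\Psi+G_\lambda$, with $\Psi(u)$ equal to the first integral if $u$ belongs to $K=\{u\in X:\abs{u'}\le 1\text{ a.e.}\}$ and $\Psi(u)=+\infty$ otherwise, and $G_\lambda$ the sum of the other two terms, one has: $\Psi$ is convex, proper and lower semicontinuous; $G_\lambda$ is of class $C^1$ on $X$ (using $q>1$ and \ref{it:fC}); $K$ is convex, bounded and closed, and --- being equi-Lipschitz and uniformly bounded --- compact in $C([0,R])$ by Ascoli-Arzel\`a, with $\abs{u}\le R$ on $K$. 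I will take as known, from the regularity theory of the operator $\mathcal{Q}$ (this is the main technical point), that every Szulkin critical point of $\mathcal I_\lambda$ is in fact a classical solution of \eqref{RMCE} with $\norml[\infty]{u'}<1$, i.e.\ the gradient constraint defining $K$ is never active at a critical point, and that a nontrivial nonnegative solution is positive on $[0,R)$ (strong maximum principle). For the sign-definite solutions I would replace $b\abs{u}^{q-2}u$ and $f(r,u)$ by $b(u^{\pm})^{q-1}$ and $f(r,u^{\pm})$, obtaining functionals $\mathcal I_\lambda^{\pm}$ whose nontrivial critical points are, by the above, positive resp.\ negative solutions of \eqref{RMCE}; everything below is stated for $\mathcal I_\lambda^{+}$, the negative case being obtained symmetrically by replacing $\phi_0$ below with $-\phi_0$.

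\emph{First solution (every $\lambda>0$).} Since $v\mapsto 1-\sqrt{1-v^2}$ is convex on $[-1,1]$, $\Psi$ is lower semicontinuous along uniform convergence, and $G_\lambda^{+}$ is continuous there; hence $\mathcal I_\lambda^{+}$ attains its infimum over the compact set $K$ at some $u_+$. Testing with $t\phi$ for a fixed $\phi\in K$, $\phi\ge 0$, $\phi\not\equiv 0$, and using $1-\sqrt{1-t^2\abs{\phi'}^2}\le t^2$ and $\abs{F(r,s)}\le\eta s^2$ for small $\abs{s}$ (from \ref{it:fat0}), one gets $\mathcal I_\lambda^{+}(t\phi)\le Ct^2-c\lambda t^q$ with $c>0$ for small $t$; since $q<2$ this is negative for small $t$, so $\inf_K\mathcal I_\lambda^{+}<0=\mathcal I_\lambda^{+}(0)$ and $u_+\ne 0$. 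Thus $u_+$ is a positive solution, with $\mathcal I_\lambda^{+}(u_+)<0$; no smallness of $\lambda$ is needed.

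\emph{Second solution, of mountain pass type ($\lambda<\lambda_*(R)$).} Put $\normh{u}^2=\int_0^R r^{N-1}\abs{u'}^2\,dr$. By the radial Poincar\'e and H\"older inequalities $\int_0^R r^{N-1}\abs{u}^q\,dr\le C\normh{u}^q$, and for $u\in K$ one has $\norml[\infty]{u}\le C\normh{u}^{2/N}$ (attaining height $h$ forces $\int_0^R r^{N-1}\abs{u'}^2\,dr\ge h^N/N$), so $\normh{u}$ small forces $\norml[\infty]{u}$ small, whence $\int_0^R r^{N-1}F(r,u)\,dr\le\eta\int_0^R r^{N-1}u^2\,dr\le\eta C\normh{u}^2$. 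With $\Psi(u)\ge\tfrac12\normh{u}^2$ this yields, below a threshold on $\normh{u}$, $\mathcal I_\lambda^{+}(u)\ge\tfrac14\normh{u}^2-C\lambda\normh{u}^q$, which is $>0$ on $\{u\in K:\normh{u}=\rho_1\}$ with $\rho_1:=(8C\lambda)^{1/(2-q)}$, provided $\rho_1$ stays below that threshold --- which holds exactly for $\lambda<\lambda_*(R)$, a number depending on $R$ through the embedding constants. On the other hand the cone $\phi_0(r):=R-r$ lies in $K$, is $\ge 0$, satisfies $\Psi(\phi_0)=\int_0^R r^{N-1}\,dr=R^N/N$, and since $\int_0^R r^{N-1}(R-r)^\theta\,dr=R^{N+\theta}\Gamma(N)\Gamma(\theta+1)/\Gamma(N+\theta+1)$, assumption \ref{it:fsuper} and the inequality \eqref{eq:stimaaa} give $\mathcal I_\lambda^{+}(\phi_0)\le R^N\bigl[(1+a_2)/N-a_1R^\theta\,\Gamma(N)\Gamma(\theta+1)/\Gamma(N+\theta+1)\bigr]\le -R^N=:-c_0<0$; moreover $\normh{\phi_0}=(R^N/N)^{1/2}>\rho_1$ once $\lambda$ is small. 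Hence $\mathcal I_\lambda^{+}$ has Szulkin's mountain pass geometry between $0$ and $\phi_0$, with paths taken inside the convex set $K$, and the Palais-Smale condition holds because $K$ is $C([0,R])$-compact: a Palais-Smale sequence lies in $K$, so a subsequence converges uniformly, and convexity of $\Psi$ together with continuity of $G_\lambda^{+}$ forces the limit to be critical. The mountain pass critical point $w_+$ has $\mathcal I_\lambda^{+}(w_+)\ge\inf_{\normh{u}=\rho_1}\mathcal I_\lambda^{+}>0$, so $w_+\notin\{0,u_+\}$ (as $\mathcal I_\lambda^{+}(u_+)<0$), and is the sought positive solution of mountain pass type.

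\emph{Third solution ($\lambda<\lambda_{**}$).} Finally I would minimise $\mathcal I_\lambda^{+}$ over the compact set $\overline{B}_{\rho_1}\cap K$, where $B_{\rho_1}=\{u\in K:\normh{u}<\rho_1\}$. On $\{\normh{u}=\rho_1\}\cap K$ the functional is $>0$ by the previous step, while at the functions $t\phi$ of the first step (for $t$ small, so $t\phi\in B_{\rho_1}$) it is $<0$; hence the minimiser $v_+$ lies in the open ball $B_{\rho_1}$, is therefore a local minimiser of $\mathcal I_\lambda^{+}$ on all of $K$, and satisfies $\mathcal I_\lambda^{+}(v_+)<0$. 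Moreover $\mathcal I_\lambda^{+}(v_+)\ge-\varepsilon(\lambda)$, where $\varepsilon(\lambda):=-\inf_{\overline{B}_{\rho_1}\cap K}\mathcal I_\lambda^{+}\to0$ as $\lambda\to0^{+}$ (since $\rho_1(\lambda)\to0$ and $\mathcal I_\lambda^{+}$ is uniformly small on a small ball). Choosing $\lambda_{**}\in(0,\lambda_*(R))$ so that $\varepsilon(\lambda)<c_0$ for $\lambda<\lambda_{**}$, one obtains $\mathcal I_\lambda^{+}(v_+)>-c_0\ge\mathcal I_\lambda^{+}(\phi_0)\ge\inf_K\mathcal I_\lambda^{+}=\mathcal I_\lambda^{+}(u_+)$, so $v_+\ne u_+$; also $v_+\ne w_+$ (the two energies have opposite signs) and $v_+\ne0$. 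Thus $v_+$ is a third positive solution, and the symmetric construction yields $w_-$ and $v_-$. \emph{The hard part} is the regularity input quoted in the first paragraph --- upgrading Szulkin critical points to classical solutions with $\norml[\infty]{u'}<1$, i.e.\ ruling out activation of the constraint $\abs{u'}\le1$ --- which relies on the boundedness of the right-hand side $\lambda b(r)\abs{u}^{q-2}u+f(r,u)$ (because $\abs{u}\le R$ on $K$) together with the regularity theory of the Minkowski curvature operator; a secondary technical point is the calibration of $\rho_1(\lambda)$ so that the positive ``rim'' $\{\normh{u}=\rho_1\}$ can coexist with the deep value $\mathcal I_\lambda^{+}(\phi_0)\le-c_0$ supplied by \eqref{eq:stimaaa}, which is exactly what pins down $\lambda_*(R)$ and $\lambda_{**}$.
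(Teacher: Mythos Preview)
Your approach is essentially the same as the paper's: Szulkin's framework on $\mathcal{A}$ with $\Psi$ extended by $+\infty$ outside $K$, sign-truncated functionals $\mathcal I_\lambda^{\pm}$, the cone $\phi_0(r)=R-r$ as the test function realising \eqref{eq:stimaaa}, and the three critical points obtained as global minimum, mountain pass, and local minimum in a small ball, separated by the chain of energy inequalities $\mathcal I_\lambda^{+}(u_+)\le -R^N< \mathcal I_\lambda^{+}(v_+)<0<\mathcal I_\lambda^{+}(w_+)$. Your $L^\infty$ estimate $\norml[\infty]{u}\le C\normh{u}^{2/N}$ on $K$ (via $(R-h)^N+h^N\le R^N$) is a pleasant shortcut; the paper instead controls $\int r^{N-1}F(r,u)$ by $\epsilon\norml[N-1,2]{u}^2+c_\epsilon\norml[N-1,\alpha]{u}^\alpha$ and uses the Sobolev embedding for $\alpha\in(2,2^*)$, which has the advantage of not relying on the constraint $\abs{u'}\le 1$.

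There is one genuine gap. For the mountain pass you invoke Szulkin's Palais--Smale condition, but your verification (``$K$ is $C([0,R])$-compact, so a subsequence converges uniformly, and the limit is critical'') stops short: Szulkin's theorem requires convergence \emph{in $\mathcal{A}$}, and uniform convergence of equi-Lipschitz functions does not by itself yield $\normh{u_n-u}\to 0$. The paper closes this (its Lemma~\ref{lem:PS}) by first showing the weak limit $u$ is a critical point, then using the regularity $\abs{u'}\le 1-\varepsilon$ (your ``hard part'') to estimate
\[
\int_0^R r^{N-1}\frac{\abs{u'}^2-\abs{u_n'}^2}{\sqrt{1-\abs{u'}^2}+\sqrt{1-\abs{u_n'}^2}}\,dr
\ge \frac{1}{\sqrt{2\varepsilon-\varepsilon^2}}\int_0^R r^{N-1}(\abs{u'}^2-\abs{u_n'}^2)\,dr,
\]
and combining this with the Palais--Smale inequality at $v=u$ to force $\limsup\normh{u_n}\le\normh{u}$. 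With that addition your argument is complete and matches the paper's proof.
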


\begin{remark}
	\label{rem:coelhoetc}
	This Theorem should be compared with the result by Coelho, Corsato
	and Rivetti in \cite[Theorem 3.1]{Coelho_Corsato_Rivetti_2014}. 
	In that Theorem the authors consider  a class of equations of the form 
	\begin{equation*}
		\begin{cases}
			-\left( \frac{r^{N-1} u'}{\sqrt{1 - \abs{u'}^{2}}}
			\right)' = r^{N-1} \lambda g(r,u) + \mu r^{N-1} f(r,u) &r
			\in (0,R), \\
			u'(0) = u(R) = 0 
		\end{cases}.
	\end{equation*}
	Their assumptions on $g(r,u)$ and $f(r,u)$ are weaker than our
	assumptions on the corresponding nonlinear terms. They prove
	existence of $3$ positive solutions provided $\mu$ is large enough
	and $\lambda < \lambda_{*}(\mu)$. The method of the proof there
	relies on the introduction of a modified problem that allows them
	to use standard critical point theory in Hilbert space for $C^{1}$
	functional in order to find the three critical point (a global 
	minimum, a local minimum and a mountain pass critical point).
\end{remark}

Our main improvement on the paper \cite{Coelho_Corsato_Rivetti_2014} 
is the existence of a seventh solution, a mountain pass critical 
point between the two local minima $v_{\pm}$.
\begin{thm}
	\label{thm:ex7}
	Let $f$ satisfy \ref{it:fC}, \ref{it:fat0}, \ref{it:AR},
	\ref{it:fsuper}, \ref{it:asymmetry} and \ref{it:fprimo}, and
	assume \ref{it:b} holds. Then there exists a $\lambda_{***} \in
	(0, \lambda_{**})$ such that for all
	$\lambda \in (0, \lambda_{***})$ \eqref{RMCE} has an additional
	solution $v$ of mountain pass type such that $I_{\lambda}(v) < 0$.
\end{thm}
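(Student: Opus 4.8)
The plan is to obtain $v$ as a mountain-pass critical point, in the sense of Szulkin \cite{Szulkin_1986}, of the non-smooth functional $I_\lambda=\Phi+\Psi$ associated with \eqref{RMCE} --- where $\Phi$ is the convex, lower semicontinuous area term $\int_0^R r^{N-1}\bigl(1-\sqrt{1-|u'|^2}\bigr)\,dr$ (equal to $+\infty$ outside the admissible cone $K$) and $\Psi\in C^1$ gathers $-\frac{\lambda}{q}\int r^{N-1}b|u|^q\,dr$ and $-\int_0^R r^{N-1}F(r,u)\,dr$ --- taking as endpoints of the admissible paths the two local minima $v_-$ and $v_+$ produced by Theorem \ref{thm:ex6}. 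First I would record, from their construction, that for $\lambda<\lambda_{**}$ the $v_\pm$ are (strict) local minima with $I_\lambda(v_\pm)<0$, and that the remaining four solutions have energies outside the interval $\bigl(\max\{I_\lambda(v_+),I_\lambda(v_-)\},0\bigr)$: one has $I_\lambda(u_\pm)\le -R^N$ (this is essentially \eqref{eq:stimaaa}, tested on a function close to $r\mapsto R-r$) while $I_\lambda(w_\pm)\ge 0$. Consequently any critical point with energy strictly in that interval is automatically distinct from the six solutions of Theorem \ref{thm:ex6} (if one prefers, one can also observe that such a $v$ changes sign, whereas the other six are one-signed).

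Next, set $c_\lambda=\inf_{\gamma\in\Gamma}\max_{t\in[0,1]}I_\lambda(\gamma(t))$, with $\Gamma$ the continuous paths in $K$ joining $v_-$ to $v_+$. Since the $v_\pm$ are (strict) local minima one has $c_\lambda\ge\max\{I_\lambda(v_+),I_\lambda(v_-)\}$, and Szulkin's mountain-pass theorem, together with its "two local minima" corollary (and, in the borderline case $c_\lambda=\max\{I_\lambda(v_\pm)\}$, the standard refinement that keeps the mountain-pass point away from isolated local minima, in its non-smooth version), yields a critical point $v$ with $I_\lambda(v)=c_\lambda$ and $v\notin\{v_+,v_-\}$, provided the Palais--Smale (or Cerami) condition holds at level $c_\lambda$. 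That compactness is inherited from the proof of Theorem \ref{thm:ex6}: a Palais--Smale sequence lies in $K$, which is bounded and compact in $C([0,R])$ (its elements being $1$-Lipschitz and vanishing at $R$), so it converges uniformly along a subsequence, and one upgrades to convergence in the energy space by the usual inversion argument for the homeomorphism $s\mapsto s/\sqrt{1-s^2}$.

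It then remains to show $c_\lambda<0$ (together with $c_\lambda>-R^N$, which holds since the path constructed below stays among small functions); this gives $I_\lambda(v)<0$ and, via the separation recorded above, the distinctness of $v$ from $u_\pm$ and $w_\pm$. To build an admissible path with negative maximum I would use that, because $1<q<2$, for every fixed $\phi\in K\setminus\{0\}$ one has $I_\lambda(t\phi)<0$ for $t>0$ small: the concave contribution $-\frac{\lambda}{q}t^q\int r^{N-1}b|\phi|^q$ dominates the $O(t^2)$ area term and, by \ref{it:fat0}, the term $-\int r^{N-1}F(r,t\phi)=o(t^2)$. Moreover, $v_\pm$ being the "small" minima of Theorem \ref{thm:ex6}, the segments $t\mapsto tv_\pm$, $t\in(0,1]$, already keep $I_\lambda$ negative. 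I would then concatenate: the ray from $v_-$ to a tiny $\varepsilon\phi_-$, the ray from $\varepsilon\phi_+$ to $v_+$, and a short arc joining $\varepsilon\phi_-$ to $\varepsilon\phi_+$ through functions of size $O(\varepsilon)$ (a straight segment at scale $\varepsilon$, perturbed generically so as never to pass through $0$); by the uniform form of the previous estimate this last arc has $I_\lambda<0$ for $\varepsilon$ small. This produces a path in $\Gamma$ with $\max I_\lambda<0$, hence $c_\lambda<0$, and $c_\lambda\ge\max\{I_\lambda(v_\pm)\}>-R^N$.

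I expect the main obstacle to be precisely this last construction: producing a connecting path on which $I_\lambda$ stays strictly negative while keeping away from the origin, which forces one to use quantitatively both the concavity brought by $q<2$ near $0$ and the precise location of $v_\pm$ coming from Theorem \ref{thm:ex6}. This is where the device of Ambrosetti, Garcia Azorero and Peral \cite{Ambrosetti_GarciaAzorero_Peral_1996} enters --- the concave perturbation not only creates the extra local minima $v_\pm$ but also pulls the whole mountain-pass level between them below $0$, which is what produces a genuinely new, seventh solution. The borderline case in the mountain-pass theorem and the routine upgrade of uniform to energy-space convergence in the compactness condition are secondary technical points.
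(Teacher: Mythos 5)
Your strategy coincides with the paper's: take the small local minima $v_{\pm}$ from Theorem \ref{thm:ex6} as endpoints of a Szulkin mountain-pass class, build a connecting path on which $I_{\lambda}$ stays negative, and invoke compactness. But there is a genuine gap at the opening step. You ``record from their construction'' that $v_{\pm}$ are (strict) local minima of the \emph{full} functional $I_{\lambda}$. That does not follow. Theorem \ref{thm:ex6} produces $v_{+}$ only as a local minimizer of the \emph{one-sided} functional $\Ip$, built from the truncation $\fp$ that vanishes for negative arguments. Minimality of $v_{+}$ for $\Ip$ says nothing about sign-changing competitors near $v_{+}$: a nearby $w$ with a small negative dip could in principle have $I_{\lambda}(w)<I_{\lambda}(v_{+})$ even though $\Ip(w)\geq \Ip(v_{+})$, because the negative part of $w$ contributes to $I_{\lambda}$ through the $-\tfrac{\lambda}{q}\int r^{N-1}b\abs{w}^{q}$ term and through $F(r,\cdot)$ on $(-R,0)$, neither of which $\Ip$ sees. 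Promoting $v_{\pm}$ to local minima of $I_{\lambda}$ is precisely the content of the paper's Lemma \ref{lem:lmI}: one supposes a nearby $\vdp$ beats $v_{+}$, then ``folds'' the small negative excursion of $\vdp$ to build a nonnegative $\vdb$ with $I_{\lambda}(\vdb)\leq I_{\lambda}(\vdp)<\Ip(v_{+})$, contradicting minimality of $v_{+}$ for $\Ip$ in the ball of radius $\rho_{+}(R)$. The estimate controlling the folding uses assumption \ref{it:asymmetry} (near-evenness of $F$ near $0$) in an essential way. Your write-up never says where \ref{it:asymmetry} is used, which is the signal that this step has been silently assumed rather than proved.

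A second, smaller gap: you assert that $I_{\lambda}(tv_{\pm})<0$ for all $t\in(0,1]$ on the strength of the $1<q<2$ heuristic, but that heuristic only controls the regime of small $t$. Extending it to $t$ near $1$ requires the fiber-map monotonicity analysis in the paper's Lemma \ref{lem:sevenMP}, which shows that $t\mapsto I_{\lambda}(tv_{+})$ has no interior critical point on $(0,1)$ for $\lambda$ small; this is where assumption \ref{it:fprimo} enters, and again your proposal does not locate its role. You do correctly identify the path construction as the place where the Ambrosetti--Garcia Azorero--Peral device operates, so you are aware that some nontrivial estimate lives there, but the argument as written does not supply it. The remaining pieces (energy-level separation from $u_{\pm}$ and $w_{\pm}$, PS compactness inherited from the geometry of $K$) are fine and match the paper.
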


\begin{remark}
	An example of a function satisfying assumptions
	\ref{it:fC}--\ref{it:fprimo} is $f(r,s) =
	\frac{a_{1}}{5}\abs{s}^{3}s^{+} - \frac{a_{2}}{5}\abs{s}^{3}s^{-}$
	provided $a_{1}$ and $a_{2}$ are such that
	\begin{equation*}
		\frac{120a_{i} R^{5}}{(N+5)\cdots N} \geq 1 + \frac{1}{N}.
	\end{equation*}
	If $N = 3$ it is satisfied if
	\begin{equation*}
		\frac{a_{i}R^{5}}{168} \geq \frac{4}{3}.
	\end{equation*}
\end{remark}

We also consider here the mean curvature equation in
Lorentz-Minkowski space which with a gradient term.
\begin{equation}
	\label{GMCE}
	\begin{cases}
		-\dive (\frac{\nabla u}{\sqrt{1 - \abs{\nabla u}^{2}}}) =
		\lambda b(\abs{x}) \abs{u}^{q-2}u + g(\abs{x}, u, \abs{\nabla
		u} ) &\text{in } B_{R}, \\
		u=0 &\text{on } \partial B_{R}(0)
	\end{cases}
\end{equation}
We assume that the nonlinear term $g$ satisfies the following
assumptions
\begin{ipotesi}{G}
	\item\label{it:fC*} $g \in C([0, R] \times [-R,R] \times [-1,1],
	\mathbb{R})$ ;
	
	\item\label{it:fat0*}  $\lim_{s \to 0} \frac{g(r,s,\xi)}{s} = 0$
	uniformly for  $r \in [0,R]$ and $\xi \in [-1,1]$;
	
	\item\label{it:AR*}
	$sg(r, s, \xi) > 0$ for every $0 < \abs{s} \leq R$, $r \in [0, 
	R]$ and $\xi \in [-1,1]$;

	\item\label{it:fsuper*} $G(r,s, \xi) \geq a_{1}\abs{s}^{\theta} -
	a_{2}$ for all $(r,s, \xi) \in [0,R] \times [ -R , R] \times [-1, 1]$
	for some $\theta > 2$ and $a_{1} > 0$, $a_{2} \geq 0$ such that
	\begin{equation}
		\label{eq:stimaaa*}
		(1 + a_{2}) \frac{1}{N} - a_{1} R^{\theta}
		\frac{\Gamma(N)\Gamma(\theta+1)}{\Gamma(N+\theta+1)} \leq -1;
	\end{equation}
	
	\item\label{it:lip} there exist positive constants $L_{1}$,
	$L_{2}$ such that
	\begin{align*}
		&\abs{g(r, s_{1}, \xi) - g(r, s_{2}, \xi)} \leq
		L_{1}\abs{s_{1} - s_{2}}\\
		&\abs{g(r, s, \xi_{1}) - g(r, s, \xi_{2})} \leq L_{2}
		\abs{\xi_{1} - \xi_{2}}
	\end{align*}
	for any $r\in [0, R]$, $\abs{s} \leq R$, $\abs{\xi} \leq 1$ 
	where
	\begin{equation}
		\label{eq:lipconst}
		L_{1} C_{2} < \frac{1}{4} \qquad \hbox{and} \qquad L_{2}
		\sqrt{C_{2}} < \frac{1}{2}.
	\end{equation}
	(Here $C_{2} = C(N,2,R)$ is the Sobolev's embedding constant, 
	see Lemma \ref{estimate3} below). 
\end{ipotesi}

Our main result about the mean curvature equation in Lorentz-Minkowski
space with gradient term reads as follows.	

\begin{thm}
	\label{thm:thg}
	Suppose that $g$ satisfies \ref{it:fC*}, \ref{it:fat0*},
	\ref{it:AR*}, \ref{it:fsuper*} and \ref{it:lip}, and $b$ satisfies
	\ref{it:b}. Then there exists $\bar{\lambda}$ such that, for any
	$\lambda \in (0, \bar{\lambda})$ problem \eqref{GMCE} has at least
	two positive and two negative nontrivial solutions.
\end{thm}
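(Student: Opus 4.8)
The plan is to freeze the gradient dependence, solve the resulting problem by the variational (Szulkin) method exactly as in Theorem \ref{thm:ex6}, and then recover genuine solutions of \eqref{GMCE} by a fixed point argument, the smallness conditions \eqref{eq:lipconst} being precisely what makes that argument close. Concretely, I would first reduce, as for \eqref{MCE}, to the radial mixed problem attached to \eqref{GMCE} and reinstate the framework of Theorem \ref{thm:ex6}: on the Lipschitz ball $\mathcal{K} = \settc{u \in W^{1,\infty}(0,R)}{u(R) = 0,\ \abs{u'} \le 1 \text{ a.e.}}$ one has the convex lower semicontinuous $\Phi(u) = \int_{0}^{R} r^{N-1}(1 - \sqrt{1 - \abs{u'}^{2}})\,dr$, and for \emph{fixed} $w \in \mathcal{K}$ one works with $I^{w}_{\lambda}(u) = \Phi(u) - \frac{\lambda}{q}\int_{0}^{R} r^{N-1} b \abs{u}^{q} - \int_{0}^{R} r^{N-1} G_{w}(r,u)$, where $g_{w}(r,s) := g(r,s,\abs{w'(r)})$ and $G_{w}(r,s) := \int_{0}^{s} g_{w}(r,t)\,dt$. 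Assumptions \ref{it:fC*}--\ref{it:fsuper*} guarantee that $g_{w}$ satisfies \ref{it:fC}, \ref{it:fat0}, \ref{it:AR} and \ref{it:fsuper} with the \emph{same} $\theta, a_{1}, a_{2}$, hence with \eqref{eq:stimaaa} satisfied, for every $w$, while \ref{it:lip} yields in addition a modulus of continuity for $s \mapsto g_{w}(r,s)$ uniform in $w$. Thus the proof of Theorem \ref{thm:ex6} applies to the frozen problem with constants independent of $w$, producing $\lambda_{0} > 0$ such that for $\lambda \in (0,\lambda_{0})$ the frozen problem has two positive solutions $U_{+}(w), V_{+}(w)$ and two negative ones $U_{-}(w), V_{-}(w)$ arising from the minimization characterizations used there (say $U_{\pm}(w)$ a global minimizer of $I^{w}_{\lambda}$ over functions of one sign, and $V_{\pm}(w)$ the small minimal solution generated by the concave term).

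Next I would prove a priori bounds on solutions $u$ of the frozen problems, uniform in $w \in \mathcal{K}$ and $\lambda < \lambda_{0}$. Writing $\psi(r) = r^{N-1} u'(r)/\sqrt{1 - \abs{u'}^{2}}$, the equation gives $\abs{\psi'(r)} \le r^{N-1}(\lambda b_{1} R^{q-1} + \norml[\infty]{g})$; since $\psi(0) = 0$ this forces $\abs{\psi(r)} \le C r^{N}$, hence $\abs{u'(r)/\sqrt{1 - \abs{u'}^{2}}} \le C r$, hence a uniform bound $\norml[\infty]{u'} \le \rho_{0} < 1$; moreover $\{\psi\}$ is equi-Lipschitz, and since $u' = \psi/\sqrt{r^{2(N-1)} + \psi^{2}}$ all these solutions lie in a fixed compact subset of $C^{1}([0,R])$. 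I would then take $\mathcal{C}$ to be the closed bounded convex set of radial $w$ with $w(R) = 0$, $\norml[\infty]{w'} \le \rho_{0}$ and $\norml[C^{1}]{w} \le M$, into which the operators $w \mapsto U_{\pm}(w)$, $w \mapsto V_{\pm}(w)$ map, with relatively compact image.

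The heart of the argument is that these operators are single valued and continuous on $\mathcal{C}$ in the $C^{1}$ topology, so Schauder's fixed point theorem yields $w^{*} \in \mathcal{C}$ with $U_{+}(w^{*}) = w^{*}$ (a positive solution of \eqref{GMCE}), and similarly for $V_{+}$ and the two negative branches. Single valuedness comes from uniqueness of the relevant minimizers: for $V_{\pm}(w)$ this is a small-$L_{1}$, small-$\lambda$ perturbation of the classical uniqueness for the concave term $\abs{u}^{q-2}u$; for $U_{\pm}(w)$ one isolates the characterizing region via the uniform a priori bounds. Continuity is the usual compactness-and-passage-to-the-limit: $w_{n} \to w$ in $C^{1}$ implies $g_{w_{n}} \to g_{w}$ uniformly by the second inequality in \ref{it:lip}, and a $C^{1}$-convergent subsequence of $U_{+}(w_{n})$ has a limit which, by lower semicontinuity and convergence of energies, is the (unique) $U_{+}(w)$. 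This is exactly where \eqref{eq:lipconst} enters quantitatively: testing the difference of two frozen equations with $U_{+}(w_{1}) - U_{+}(w_{2})$, using that $1 - \sqrt{1 - \abs{p}^{2}}$ has Hessian $\ge I$ (strict monotonicity of the operator) together with $\norml[\infty]{v} \le C_{2}\norml{v}$ from Lemma \ref{estimate3}, one reaches an estimate of the form $\norml{U_{+}(w_{1}) - U_{+}(w_{2})} \le 4 L_{1} C_{2}\,\norml{U_{+}(w_{1}) - U_{+}(w_{2})} + 2 L_{2}\sqrt{C_{2}}\,\norml{w_{1}' - w_{2}'}$, which \eqref{eq:lipconst} turns into a contraction, so one may use Banach's theorem in place of Schauder and obtain each branch directly. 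Finally the solutions are nontrivial because the minimizing levels are negative ($I^{w}_{\lambda}(U_{\pm}(w)), I^{w}_{\lambda}(V_{\pm}(w)) < 0 = I^{w}_{\lambda}(0)$, the sublinear term with $q < 2$ pushing the energy below $0$), positive resp.\ negative by the maximum principle, and the two of each sign are distinct because for $\lambda$ small they are separated in $C^{1}$-norm (a large minimizer versus the small sublinear one).

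I expect the main obstacle to be precisely this single-valued, continuous (indeed contractive) dependence of the frozen solutions on $w$: the operator is non-smooth and degenerates as $\abs{u'} \to 1$, the concave term $\abs{u}^{q-2}u$ destroys the convexity of $I^{w}_{\lambda}$ near $0$, and the second solution is not a global minimizer, so one must carefully pin down the region in which each solution is characterized, verify that the uniform a priori bounds keep that characterization stable along sequences $w_{n} \to w$, and exploit \eqref{eq:lipconst} to close the contraction — treating the Lipschitz dependence in $u$ (constant $L_{1}$) and in $\abs{\nabla u}$ (constant $L_{2}$) separately, as the hypothesis is stated.
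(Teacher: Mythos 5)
Your overall strategy coincides with the paper's: freeze the gradient dependence, run the machinery of Theorem \ref{thm:ex6} for the frozen problem to produce a global minimizer and a mountain-pass solution of one sign for each fixed $\omega$, and then use the smallness condition \eqref{eq:lipconst} together with the strict monotonicity of the curvature operator ($(1-\sqrt{1-|p|^2})$ has second derivative $\geq 1$) to close a contraction-type estimate. Where you diverge is the outer layer: you first set up a Schauder fixed-point argument for the solution maps $w \mapsto U_{\pm}(w)$, $w \mapsto V_{\pm}(w)$, and only later observe that one can switch to Banach. The paper follows De~Figueiredo--Girardi--Matzeu \cite{DeFigueiredo_Girardi_Matzeu_2004} more literally: it does not introduce a solution map at all, but constructs a Picard iteration $u_{n}$ where at each step \emph{any} positive minimizer (resp.\ mountain-pass solution) of the $u_{n-1}$-frozen problem is chosen, and proves directly that the iterates form a Cauchy sequence in $\mathcal{A}$. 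This sidesteps entirely the single-valuedness and continuity issues that you yourself flag as the main obstacle; those issues are genuinely thorny here, since the frozen problem has several positive solutions (Theorem~\ref{thm:ex6}), so the ``solution map'' is not well defined without first fixing a selection. The paper also needs the uniform lower bounds of Lemmas~\ref{inf} and~\ref{inf*} to guarantee that the limit is nontrivial, which you reach by a slightly different route (negativity of the energy level), but both devices serve the same purpose.

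There is one concrete gap in your contraction estimate. You write
\begin{equation*}
	\normh{U_{+}(w_{1}) - U_{+}(w_{2})} \leq 4L_{1}C_{2}\,
	\normh{U_{+}(w_{1}) - U_{+}(w_{2})} +
	2L_{2}\sqrt{C_{2}}\,\normh{w_{1}' - w_{2}'},
\end{equation*}
but this omits the contribution of the sublinear term $\lambda
b(r)\abs{u}^{q-2}u$, which does not cancel when you subtract the two
frozen equations and test with $U_{+}(w_{1}) - U_{+}(w_{2})$: it
produces an extra term $\lambda\int r^{N-1} b(r)
(U_{+}(w_{1})^{q-1}-U_{+}(w_{2})^{q-1})(U_{+}(w_{1})-U_{+}(w_{2}))\,dr$
of the \emph{wrong} sign for contraction (it is nonnegative since
$s\mapsto s^{q-1}$ is increasing on $(0,\infty)$). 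Handling it is
exactly what forces the additional restriction $\lambda <
\bar{\lambda}$ in the statement, and the paper devotes a nontrivial
step to absorbing this term before invoking \eqref{eq:lipconst}. Your
remark that it is a ``small-$\lambda$ perturbation of the classical
uniqueness for the concave term'' gestures at the right idea but would
need to be carried out quantitatively; as written, the estimate and
therefore the fixed-point (or Cauchy) closure is incomplete. Once that
term is accounted for, and the Schauder detour is replaced by the
direct iteration as you yourself suggest at the end, the argument
aligns with the paper's.
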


\section{ Preliminaries}
\label{sec:prel}
In this section, we focus on some preliminaries and introduce some
technical tools that will be used in the sequel. 

\subsection{Szulkin critical point theory for non-smooth functionals}

To find solutions of our problem we will use the the critical point
theory for non-smooth functionals developed by Szulkin in
\cite{Szulkin_1986}, which we present briefly here.

\begin{definition}
	\label{def:d1}
	Let $X$ be a real Banach space and $\Psi \colon X \to
	(-\infty,+\infty]$ be a convex lower semi-continuous function. Let
	$D(\Psi) = \settc{u \in X}{\Psi(u)<+\infty}$ be the effective
	domain of $\Psi$. Denote by $X^{*}$ the dual of $X$ and by
	$\scalarh{\cdot}{\cdot}$ the duality pairing between $X^{*}$ and
	$X$. For $u\in D(\Psi)$, the set
	\begin{equation*}
		\partial\Psi(u) = \settc{u^{*} \in X^{*}}{\Psi(v) - \Psi(u)
		\geq \scalarh{u^{*}}{v - u} \ \forall v \in D(\Psi)}
	\end{equation*}
	is called the sub-differential of $\Psi$ at $u$. 
	
	For a functional of the form $I(u) = \Psi(u) + \mathcal{F}(u)$,
	with $\Psi(u)$ as above and $\mathcal{F} \in C^{1}(X,\mathbb{R})$,
	then $u \in D(\Psi)$ is said to be a critical point for $I$ if
	\begin{equation}
		\label{def:criticalpoint}
		\scalarh{\mathcal{F}'(u)}{v-u} + \Psi(v) - \Psi(u) \geq 0,
		\qquad \forall v\in D(\Psi).
	\end{equation}
	or, equivalently, if $-\mathcal{F}'(u) \in \partial\Psi(u)$. 
	
	A number $c \in \mathbb{R}$ such that $I^{-1}(c)$ contains a
	critical point is called a critical value of $I$.
\end{definition}

\begin{prop}[Proposition 1.1 in \cite{Szulkin_1986}]
	If $u \in X$ is a local minimum for $I$ then $u$ is a critical 
	point.
\end{prop}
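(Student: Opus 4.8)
The plan is to linearize the minimality inequality along segments issuing from $u$, using the convexity of $\Psi$ to control $\Psi$ on such segments and the differentiability of $\mathcal{F}$ to pass to the limit. First I would record two consequences of $u$ being a local minimum: since $I$ takes values in $(-\infty,+\infty]$ and $I(u) \le I(w)$ for all $w$ in some neighbourhood $U$ of $u$, necessarily $I(u) < +\infty$, hence $u \in D(\Psi)$; and there is $r > 0$ with $I(u) \le I(w)$ whenever $\normh{w - u} < r$.

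Next, fix an arbitrary $v \in D(\Psi)$ and consider the segment $w_{t} = u + t(v - u) = (1-t)u + tv$, $t \in [0,1]$. Since $D(\Psi)$ is convex, $w_{t} \in D(\Psi)$; and for $t$ small enough $\normh{w_{t} - u} = t\,\normh{v-u} < r$, so $I(u) \le I(w_{t})$. Combining this with the convexity inequality $\Psi(w_{t}) \le (1-t)\Psi(u) + t\Psi(v)$ yields
\[
	\Psi(u) + \mathcal{F}(u) \;\le\; \Psi(w_{t}) + \mathcal{F}(w_{t}) \;\le\; (1-t)\Psi(u) + t\Psi(v) + \mathcal{F}(w_{t}),
\]
and after cancelling $\Psi(u)$ and dividing by $t > 0$,
\[
	\frac{\mathcal{F}(u) - \mathcal{F}(u + t(v - u))}{t} \;\le\; \Psi(v) - \Psi(u).
\]

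Finally I would let $t \to 0^{+}$. Because $\mathcal{F} \in C^{1}(X,\mathbb{R})$, the left-hand side converges to $-\scalarh{\mathcal{F}'(u)}{v - u}$, so we obtain $\scalarh{\mathcal{F}'(u)}{v - u} + \Psi(v) - \Psi(u) \ge 0$. Since $v \in D(\Psi)$ was arbitrary, this is precisely \eqref{def:criticalpoint}, and $u$ is a critical point of $I$. There is no genuine obstacle in this argument; the only two points that deserve an explicit word are that $D(\Psi)$ is convex (so the whole segment $w_{t}$ lies in the effective domain, keeping $\Psi(w_{t})$ finite and the convexity estimate available) and that the difference quotient of $\mathcal{F}$ has the stated limit, which follows at once from Fréchet — indeed even Gâteaux — differentiability.
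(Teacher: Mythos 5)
The paper does not give a proof of this proposition; it simply cites Szulkin's Proposition~1.1. Your argument is correct and is in fact the standard proof (the one in Szulkin's paper): use convexity of $\Psi$ along the segment from $u$ to $v$, use local minimality to bound $I(u)$ by $I((1-t)u+tv)$, cancel $\Psi(u)$, divide by $t$, and let $t\to 0^{+}$ using the G\^ateaux differentiability of $\mathcal{F}$.

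One small imprecision at the start: from ``$I(u)\le I(w)$ for all $w$ near $u$'' alone it does \emph{not} follow that $I(u)<+\infty$, since nothing a priori rules out $I\equiv+\infty$ on a neighbourhood of $u$ (in which case $u$ is vacuously a ``local minimum'' but cannot be a critical point, as $u\notin D(\Psi)$). The correct reading is that $u\in D(\Psi)$ is part of the hypothesis (as it must be for the notion of critical point in Definition~\ref{def:d1} to apply), or equivalently that one should only consider local minima with finite energy. Once that is granted, everything else in your argument --- the convexity of $D(\Psi)$, the cancellation of the finite quantity $\Psi(u)$, the passage to the limit in the difference quotient --- is exactly right.
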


Always following \cite{Szulkin_1986} we now state in this setting the
Palais-Smale compactness condition.
\begin{definition}
	The functional $I(u) = \Psi(u) + \mathcal{F}(u)$ satisfies the
	Palais-Smale condition if every sequence $\{u_{n}\} \in X$ for
	which $I(u_{n}) \to c$ and
	\begin{equation}
		\label{eq:ps}
		\scalarh{\mathcal{F}'(u_{n})}{v - u_{n}} + \Psi(v) -
		\Psi(u_{n}) \geq-\varepsilon_{n} \normh{v-u_{n}} \qquad
		\forall v\in D(\Psi),
	\end{equation}
	where $\varepsilon_{n} \to 0$ possesses a convergent subsequence.
\end{definition}

\begin{prop}[Theorem 1.7 of \cite{Szulkin_1986}]
	\label{minimumcritical}
	If $I$ is bounded from below and satisfies (PS) then 
	\begin{equation*}
		c = \inf_{X} I
	\end{equation*}
	is a critical value.
\end{prop}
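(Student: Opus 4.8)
The plan is to reproduce Szulkin's original argument, whose engine is Ekeland's variational principle. Since $\Psi$ is convex and lower semi-continuous and $\mathcal{F}\in C^{1}(X,\mathbb{R})$, the functional $I=\Psi+\mathcal{F}$ is lower semi-continuous on the complete metric space $X$; as $\Psi$ is proper and $I$ is bounded below by hypothesis, $c=\inf_{X}I$ is finite. Applying Ekeland's variational principle with $\varepsilon=1/n$ then furnishes, for each $n$, a point $u_{n}\in X$ with $c\le I(u_{n})\le c+\tfrac1n$ (so in particular $u_{n}\in D(\Psi)$) and
\begin{equation*}
	I(v)-I(u_{n})\ge -\tfrac1n\normh{v-u_{n}}\qquad\forall v\in X .
\end{equation*}

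The next step is to convert this into the Palais--Smale inequality \eqref{eq:ps}. Fix $v\in D(\Psi)$ and $t\in(0,1)$, and apply the displayed inequality with $v$ replaced by $u_{n}+t(v-u_{n})\in D(\Psi)$. Using convexity of $\Psi$, namely $\Psi(u_{n}+t(v-u_{n}))-\Psi(u_{n})\le t\bigl(\Psi(v)-\Psi(u_{n})\bigr)$, together with the first-order expansion $\mathcal{F}(u_{n}+t(v-u_{n}))-\mathcal{F}(u_{n})=t\scalarh{\mathcal{F}'(u_{n})}{v-u_{n}}+o(t)$ as $t\to 0^{+}$, one obtains
\begin{equation*}
	t\scalarh{\mathcal{F}'(u_{n})}{v-u_{n}}+t\bigl(\Psi(v)-\Psi(u_{n})\bigr)+o(t)\ge -\tfrac1n\, t\normh{v-u_{n}};
\end{equation*}
dividing by $t$ and letting $t\to 0^{+}$ yields \eqref{eq:ps} with $\varepsilon_{n}=1/n$, the inequality being trivial when $v\notin D(\Psi)$ since then $\Psi(v)=+\infty$. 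Hence $\{u_{n}\}$ satisfies $I(u_{n})\to c$ and \eqref{eq:ps}.

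Now invoke (PS): up to a subsequence, $u_{n_{k}}\to u$ in $X$. By lower semi-continuity of $I$, $I(u)\le\liminf_{k}I(u_{n_{k}})=c$, so $I(u)=c$ and $u\in D(\Psi)$. It remains to pass to the limit in \eqref{eq:ps}: since $\mathcal{F}\in C^{1}$ we have $\scalarh{\mathcal{F}'(u_{n_{k}})}{v-u_{n_{k}}}\to\scalarh{\mathcal{F}'(u)}{v-u}$, while lower semi-continuity of $\Psi$ gives $\liminf_{k}\Psi(u_{n_{k}})\ge\Psi(u)$, hence $\limsup_{k}\bigl(\Psi(v)-\Psi(u_{n_{k}})\bigr)\le\Psi(v)-\Psi(u)$. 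Taking $\limsup_{k}$ on both sides of \eqref{eq:ps} therefore gives $\scalarh{\mathcal{F}'(u)}{v-u}+\Psi(v)-\Psi(u)\ge 0$ for every $v\in D(\Psi)$, i.e. $u$ is a critical point of $I$ in the sense of \eqref{def:criticalpoint}. Since $I(u)=c$, the number $c$ is a critical value.

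The only genuinely delicate points are the passage $t\to 0^{+}$ that manufactures \eqref{eq:ps} out of Ekeland's inequality — which uses \emph{both} the convexity of $\Psi$ (to linearize it with a clean inequality along the segment) and the differentiability of $\mathcal{F}$ — and the final limit, where one must take a $\limsup$ rather than a $\lim$ and exploit the lower semi-continuity of $\Psi$ in the correct direction. No compactness beyond (PS) is needed: boundedness below is used only to ensure $c$ is finite and to license Ekeland's principle.
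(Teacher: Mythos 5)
The paper does not prove Proposition~\ref{minimumcritical} at all --- it is stated merely as a citation of Theorem 1.7 in \cite{Szulkin_1986}. Your reconstruction via Ekeland's variational principle is precisely Szulkin's original argument, and it is correct: the passage from Ekeland's inequality to \eqref{eq:ps} by testing along the segment $u_{n}+t(v-u_{n})$, using convexity of $\Psi$ to get the clean one-sided inequality and differentiability of $\mathcal{F}$ for the $o(t)$ remainder, and the final limit exploiting continuity of $\mathcal{F}'$ together with lower semi-continuity of $\Psi$ in the right direction, are all sound. (A minor stylistic point: rather than ``taking $\limsup$ on both sides,'' it is cleaner to rearrange \eqref{eq:ps} as $\Psi(u_{n_k})\le\scalarh{\mathcal{F}'(u_{n_k})}{v-u_{n_k}}+\Psi(v)+\varepsilon_{n_k}\normh{v-u_{n_k}}$ and take $\liminf$ of the left side against the limit of the right side; this makes transparent why no finiteness issue arises even if $\Psi(u_{n_k})$ were unbounded. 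Your version reaches the same conclusion.)
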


\begin{prop}(Mountain Pass Theorem, see Theorem 3.2 of
\cite{Szulkin_1986})
	\label{thm:szulkin}
	Suppose that $I = \Psi + \mathcal{F}$, where $\mathcal{F} \in
	C^{1}(X, \mathbb{R})$ and $\Psi \colon X \to (-\infty, +\infty]$
	is convex, proper (i.e. $\Psi \not\equiv +\infty) $, and lower
	semi-continuous, is a function satisfying Palais-Smale condition
	and
	\begin{enumerate}
		\item[(i)] $I(0)=0$ and there exist $M$, $\rho > 0$ such that
		$I|_{\partial B_{\rho}}\geq M$,
		
		\item[(ii)] $I(e) \leq 0$ for some $e \notin \overline{B}_{\rho}$.
	\end{enumerate}
	Then $I$ has a critical value $$c \geq M$$ which may be
	characterized by
	\begin{equation*}
		c = \inf_{\phi \in \Phi} \sup_{s \in [0, 1]} I(\phi(s)),
	\end{equation*}
	where $\Phi = \settc{\phi \in C ([0, 1] , X)}{\phi(0) = 0, \ 
	\phi(1) = e}$.
\end{prop}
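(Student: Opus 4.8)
The plan is to run the classical minimax argument for the linking geometry (i), (ii), replacing the pseudo-gradient flow of the smooth theory by a deformation built directly from the convexity and lower semicontinuity of $\Psi$ together with the $C^{1}$ regularity of $\mathcal{F}$. Throughout put $I^{a} = \settc{u \in X}{I(u) \le a}$. The easy half is to see that $c$ is well defined with $M \le c < +\infty$. For $c \ge M$: every $\phi \in \Phi$ joins $0 \in B_{\rho}$ to $e \notin \overline{B}_{\rho}$, hence by continuity meets $\partial B_{\rho}$ at some $\phi(s_{0})$, so $\sup_{s\in[0,1]} I(\phi(s)) \ge I(\phi(s_{0})) \ge M$, and taking the infimum over $\Phi$ yields $c \ge M > 0$. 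For $c < +\infty$: since $\Psi(0) < +\infty$ and $\Psi(e) < +\infty$ (the latter because $I(e) \le 0$), and $D(\Psi)$ is convex, the path $\phi_{0}(s) = se$ lies in $D(\Psi)$; convexity of $\Psi$ gives $\Psi(se) \le \max\{\Psi(0),\Psi(e)\}$ while $\mathcal{F}$ is bounded on the compact set $\phi_{0}([0,1])$, whence $c \le \sup_{s} I(se) < +\infty$.

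The core is a deformation lemma: if $c$ is not a critical value, then there exist $\varepsilon \in (0, M/2)$, $T > 0$ and a continuous map $\eta \colon [0,T] \times X \to X$ such that $\eta(0,\cdot) = \mathrm{id}_{X}$, $\eta(t,u) = u$ whenever $\abs{I(u) - c} \ge 2\varepsilon$, and $\eta(T, I^{c+\varepsilon}) \subset I^{c-\varepsilon}$. To prove it I would first extract from the Palais--Smale condition a quantitative nondegeneracy: for $\varepsilon$ small enough there is $\beta > 0$ so that every $u$ with $\abs{I(u) - c} \le 2\varepsilon$ admits $v = v(u) \in D(\Psi)$, $v \neq u$, with $\normh{v - u} \le 1$ and
\begin{equation*}
	\scalarh{\mathcal{F}'(u)}{v - u} + \Psi(v) - \Psi(u) \le -\beta\,\normh{v - u}.
\end{equation*}
Indeed, were this false for $\varepsilon = \beta = 1/n$, one would get $u_{n}$ with $I(u_{n}) \to c$ satisfying precisely the defining inequality \eqref{eq:ps} of a Palais--Smale sequence (with $\varepsilon_{n} = 1/n$; the range $\normh{v-u_{n}} > 1$ is reduced to the unit-sphere case by convexity of $\Psi$); by (PS), along a subsequence $u_{n} \to u$; testing \eqref{eq:ps} with the fixed $v = u$ gives $\Psi(u_{n}) \to \Psi(u)$, hence $I(u) = c$, and then \eqref{eq:ps} with arbitrary $v$ (using continuity of $\mathcal{F}'$ and lower semicontinuity of $\Psi$) shows $u$ is a critical point at level $c$, a contradiction. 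Moreover, since $w \mapsto \scalarh{\mathcal{F}'(w)}{v(u) - w}$ and $w \mapsto \normh{v(u) - w}$ are continuous and $\Psi$ is lower semicontinuous, the inequality above (with $\beta$ halved) persists for $w$ in a whole $X$-neighbourhood $\mathcal{N}_{u}$ of $u$, with the target point $v(u)$ kept fixed.

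It remains to globalize and conclude. Cover $\settc{u \in X}{\abs{I(u) - c} \le 2\varepsilon}$, which is contained in $D(\Psi)$, by the neighbourhoods $\mathcal{N}_{u}$, and, $X$ being a metric space, pass to a locally finite refinement with targets $v_{i} \in D(\Psi)$ and a subordinate locally Lipschitz partition of unity $\{\pi_{i}\}$; choosing a locally Lipschitz cutoff $\chi$ equal to $1$ on $\{\abs{I-c} \le \varepsilon\}$ and to $0$ off $\{\abs{I-c} < 2\varepsilon\}$, set $V(u) = \chi(u)\bigl(\sum_{i}\pi_{i}(u) v_{i} - u\bigr)$; because $D(\Psi)$ is convex, $\sum_{i}\pi_{i}(u) v_{i} \in D(\Psi)$ and the whole segment from $u$ to $u + V(u)$ stays in $D(\Psi)$, while $\normh{V(u)} \le 1$. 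The flow $\eta$ of $\dot\eta = V(\eta)$, $\eta(0,\cdot) = \mathrm{id}$, is then globally defined and continuous, equals the identity off the strip by the cutoff, and, using the one-sided estimate $\Psi(v) - \Psi(u) \ge \lim_{t\to 0^{+}} t^{-1}\bigl(\Psi((1-t)u + tv) - \Psi(u)\bigr)$ from convexity together with convexity applied to the combination $\sum_{i}\pi_{i} v_{i}$, satisfies $\frac{d^{+}}{dt} I(\eta(t,u)) \le \chi(\eta)\sum_{i}\pi_{i}(\eta)\bigl[\scalarh{\mathcal{F}'(\eta)}{v_{i}-\eta} + \Psi(v_{i}) - \Psi(\eta)\bigr] \le 0$, with a strictly negative upper bound that is uniform over $\{\abs{I-c} \le \varepsilon\}$ by the nondegeneracy above; hence for $T$ large enough $\eta(T, I^{c+\varepsilon}) \subset I^{c-\varepsilon}$. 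Finally, assuming $c$ is not critical, pick $\phi \in \Phi$ with $\sup_{s} I(\phi(s)) < c + \varepsilon$; then $\psi = \eta(T, \phi(\cdot)) \in \Phi$ — it is continuous, and $\eta$ fixes its endpoints since $\abs{I(0) - c} = \abs{I(e) - c} \ge c \ge M > 2\varepsilon$ — while $\sup_{s} I(\psi(s)) \le c - \varepsilon < c$, contradicting $c = \inf_{\Phi}\sup_{[0,1]} I$. Thus $c$ is a critical value, and $c \ge M$ was already shown.

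The main obstacle is the deformation lemma: all the non-smoothness is concentrated there, the usual pseudo-gradient vector field is unavailable, and the decisive substitutes are (i) convexity of $D(\Psi)$, which keeps segments and convex combinations of admissible targets admissible; (ii) the one-sided directional estimate for $\Psi$, which turns the subdifferential-type inequality into a genuine descent rate along the flow; and (iii) lower semicontinuity of $\Psi$, which spreads a pointwise descent inequality to a neighbourhood. Care is needed to verify that this descent rate does not degenerate where it matters (so that the flow reaches $I^{c-\varepsilon}$ in finite time) and to handle the sublevel-set topology of the merely lower semicontinuous $I$ when choosing $\chi$. An alternative that sidesteps the explicit flow is to apply Ekeland's variational principle to $\phi \mapsto \max_{s} I(\phi(s))$ on the complete metric space $(\Phi, d_{\infty})$, obtaining an almost-minimizing path whose peak set contains an almost-critical point of $I$, and then invoking (PS) as above; the perturbation estimates are essentially the same but localized near the compact peak set.
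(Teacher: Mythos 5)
The paper does not give a proof of this proposition; it cites it as Theorem~3.2 of Szulkin's paper, so the comparison is against the standard proof rather than one in the text. Your skeleton is the right shape, and the preparatory steps are essentially sound: the argument for $M \le c < +\infty$, the extraction from (PS) of a quantitative non‐degeneracy (including the reduction of targets $v$ to $\normh{v-u}\le 1$ via convexity of $\Psi$), and the spread of the descent inequality to a neighbourhood via lower semicontinuity of $\Psi$ and continuity of $\mathcal{F}'$ all go through. The genuine gap is in the deformation lemma. You construct a continuous ODE flow $\dot\eta=V(\eta)$ and try to bound the Dini derivative of $I\circ\eta$ using convexity of $\Psi$. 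But convexity controls $\Psi$ only \emph{exactly on segments}: the Euler step $\eta(t)+hV(\eta(t))$ is a convex combination of $\eta(t)$ and points of $D(\Psi)$, whereas the true solution $\eta(t+h)$ differs from it by $o(h)$, and since $\Psi$ is merely lsc — with $D(\Psi)$ possibly of empty interior, so that $\Psi$ may be discontinuous at every point of its domain and equal to $+\infty$ arbitrarily close by — nothing prevents $\Psi(\eta(t+h))=+\infty$ for all small $h>0$. The differential inequality you write down therefore does not follow, and the flow may not even remain in $D(\Psi)$. A second, independent obstruction: the cut‐off $\chi$ you require, locally Lipschitz and separating $\settc{u}{\abs{I(u)-c}\le\varepsilon}$ from $\settc{u}{\abs{I(u)-c}\ge 2\varepsilon}$, need not exist because $I$ is only lsc, so these two sets are not in general at positive distance. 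You flag both difficulties at the end, but flagging is not resolving, and they are fatal for the ODE route as written.

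The standard repair — and, as far as I recall, what Szulkin in fact does — is to abandon the continuous flow in favour of a \emph{discrete}, piecewise‐affine deformation: finitely many small moves $u\mapsto(1-h)u+hv_i$, with a locally finite cover and partition of unity organising which $v_i$ is used where (localised to the compact image of a near‐optimal path rather than all of $X$, which also removes the need for a Lipschitz cut‐off built from $I$). At every step the new point is a genuine convex combination of points of $D(\Psi)$, so the convexity inequality for $\Psi$ applies with no error term, and lower semicontinuity is only invoked to make the descent estimate hold on neighbourhoods. Your closing alternative via Ekeland's variational principle applied to $\phi\mapsto\max_{s}I(\phi(s))$ on $(\Phi,d_\infty)$ is sound and genuinely sidesteps both obstructions; if you want a complete self‐contained proof, developing that route — noting that this path functional is lsc and bounded below near $c$, extracting an almost‐critical point on the peak set of an Ekeland path, and then invoking your non‐degeneracy step with (PS) — is the cleaner way to go.
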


\subsection{The functional space and some inequalities}
We let
\begin{equation*}
	\mathcal{A}(0,R) = \settc{u \in W_{\text{loc}}^{1,1}(0,
	R]}{\int^{R}_{0} r^{N-1} \abs{u'}^{2} \, dr < +\infty, \ u(R)=0},
\end{equation*}
which is an Hilbert space endowed with the scalar product
\begin{equation*}
	\scalarh{u}{v} := \int^{R}_{0} r^{N-1} u'(r) v'(r) \, dr
\end{equation*}
and the corresponding norm
\begin{equation*}
	\normh{u}^{2} = \scalarh{u}{u}.
\end{equation*}
For $p \geq 1$, we consider the following weighted Lebesgue spaces
\begin{equation*}
	L^{p}_{N-1}(0,R) := \settc{ u \colon (0,R) \to \mathbb{R},
	\text{measurable}}{ \int^{R}_{0} r^{N-1} \abs{u}^{p}\, dr <
	+\infty}.
\end{equation*}
The space $L^{p}_{N-1}(0,R)$ ($L^{p}_{N-1}$ for brevity) is a Banach
space with norm
\begin{equation*}
	\norml[N-1,p]{u} = \left( \int^{R}_{0} r^{N-1} \abs{u}^{p} \, dr
	\right)^{\frac{1}{p}},
\end{equation*}
while with $L^{\infty}$ we denote the usual Lebesgue space
$L^{\infty}(0,R)$ and with $\norml[\infty]{ \cdot }$ its norm. Let us
remark that for all radially symmetric functions $v \colon
\mathbb{R}^{N} \to \mathbb{R}$ we have that $v(x) = u(\abs{x})$ for
some function $u \colon [0,+\infty) \to \mathbb{R}$, and
\begin{align*}
	&\int_{B(0,R)} \abs{\nabla v(x)}^{2} \, dx = \omega_{N-1}
	\int_{0}^{R} r^{N-1} \abs{u'(r)}^{2} \, dr, \\
	&\int_{B(0,R)} \abs{v(x)}^{p} \, dx = \omega_{N-1} \int_{0}^{R}
	r^{N-1} \abs{u(r)}^{p} \, dr,
\end{align*}
and hence we have that for radial functions the norm in $\mathcal{A}$
is, up to a constant, the norm in $H^{1}_{0}(B(0,R))$ and the norms
$\norml[N-1,p]{ \cdot }$ are the $L^{p}(B(0,R))$ norms. The Hilbert
space $\mathcal{A}$ can be identified with the subspace of
$H^{1}_{0}(B(0,R))$ consisting of radial functions.

Let us now recall some well known properties of $\mathcal{A}$, which
plays an essential role in the proof of the main result. One can find 
a more general statement -- and a sketch of the proofs -- in
\cite{Bonheure_Gomes_Sanchez_2005}.

\begin{lem}[Hardy's inequality for radial functions, Proposition
	1 of \cite{Bonheure_Gomes_Sanchez_2005}]
	\label{estimate1}
	Assume that $N \geq 3$. Then for any $u\in \mathcal{A}$ it holds
	\begin{equation*}
		\int_{0}^{R}r^{N-3}u^{2}dr\leq \frac{4}{(N-2)^{2}}
		\normh{u}^{2}.
	\end{equation*}
\end{lem}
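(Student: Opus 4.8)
\emph{Proof proposal.} The plan is the classical one-dimensional Hardy argument, carried out on a truncated interval $[\varepsilon,R]$ in order to avoid any a priori discussion of the behaviour of $u$ near $0$. First I would fix $\varepsilon\in(0,R)$ and use that $u\in W^{1,1}_{\mathrm{loc}}(0,R]$ is absolutely continuous on $[\varepsilon,R]$, so that $r^{N-2}u^{2}$ is absolutely continuous there and the fundamental theorem of calculus applies. Writing $r^{N-3}=\frac{1}{N-2}(r^{N-2})'$ and integrating by parts gives
\begin{equation*}
	(N-2)\int_{\varepsilon}^{R} r^{N-3}u^{2}\,dr
	= \bigl[r^{N-2}u^{2}\bigr]_{\varepsilon}^{R}
	- 2\int_{\varepsilon}^{R} r^{N-2}u\,u'\,dr
	= -\varepsilon^{N-2}u(\varepsilon)^{2}
	- 2\int_{\varepsilon}^{R} r^{N-2}u\,u'\,dr,
\end{equation*}
where the boundary term at $R$ vanishes because $u(R)=0$ and the boundary term at $\varepsilon$ has a favourable (nonpositive) sign.

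The key step is then a Cauchy--Schwarz estimate on the cross term: splitting $r^{N-2}=r^{(N-3)/2}\cdot r^{(N-1)/2}$ we get
\begin{equation*}
	\labs{\int_{\varepsilon}^{R} r^{N-2}u\,u'\,dr}
	\leq \Bigl(\int_{\varepsilon}^{R} r^{N-3}u^{2}\,dr\Bigr)^{1/2}
	\Bigl(\int_{\varepsilon}^{R} r^{N-1}\abs{u'}^{2}\,dr\Bigr)^{1/2}
	\leq \Bigl(\int_{\varepsilon}^{R} r^{N-3}u^{2}\,dr\Bigr)^{1/2}\normh{u},
\end{equation*}
both factors being finite since $u$ is bounded on $[\varepsilon,R]$ and $u\in\mathcal{A}$. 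Setting $A_{\varepsilon}:=\int_{\varepsilon}^{R} r^{N-3}u^{2}\,dr\in[0,\infty)$, the two displays combine to $(N-2)A_{\varepsilon}\leq 2A_{\varepsilon}^{1/2}\normh{u}$; dividing by $A_{\varepsilon}^{1/2}$ (trivial if $A_{\varepsilon}=0$) yields $A_{\varepsilon}^{1/2}\leq\frac{2}{N-2}\normh{u}$, hence $A_{\varepsilon}\leq\frac{4}{(N-2)^{2}}\normh{u}^{2}$ uniformly in $\varepsilon$.

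Finally I would let $\varepsilon\to0^{+}$: since $r\mapsto r^{N-3}u^{2}\ge 0$, the monotone convergence theorem gives $\int_{0}^{R} r^{N-3}u^{2}\,dr=\lim_{\varepsilon\to0^{+}}A_{\varepsilon}\leq\frac{4}{(N-2)^{2}}\normh{u}^{2}$, which is the assertion (and in particular shows $r^{N-3}u^{2}\in L^{1}(0,R)$). The only delicate point is precisely the control of the behaviour of $u$ near $r=0$; the truncation at $\varepsilon$ sidesteps it entirely, the nonpositivity of the boundary term $-\varepsilon^{N-2}u(\varepsilon)^{2}$ being exactly what makes the estimate uniform without needing to know that this term vanishes in the limit.
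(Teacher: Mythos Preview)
Your argument is correct and is the classical one-dimensional Hardy proof: integrate by parts on $[\varepsilon,R]$, discard the nonpositive boundary contribution at $\varepsilon$, apply Cauchy--Schwarz with the splitting $r^{N-2}=r^{(N-3)/2}r^{(N-1)/2}$, and pass to the limit via monotone convergence. Each step is justified in the space $\mathcal{A}$ exactly as you say.

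There is nothing to compare against in the paper itself: the lemma is only stated there, with a reference to \cite{Bonheure_Gomes_Sanchez_2005} for the proof. That reference gives essentially the same integration-by-parts computation (in a slightly more general weighted setting), so your approach matches the intended one.
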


\begin{lem}[Corollary 2 of \cite{Bonheure_Gomes_Sanchez_2005} and 
	Lemma 2.2 of \cite{Coelho_Corsato_Rivetti_2014}]
	\label{estimate2}
	Let $N \geq 2$ and $m > (N-2)/2$. Then for all $u\in \mathcal{A}$
	\begin{equation*}
		\norml[\infty]{r^{m}u} \leq C(N,m, R) \normh{u}.
	\end{equation*}
\end{lem}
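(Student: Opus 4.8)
\textbf{Proof proposal for Lemma \ref{estimate2}.}

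The plan is to prove the pointwise bound directly from the fundamental theorem of calculus, exploiting the boundary condition $u(R) = 0$ together with the Cauchy--Schwarz inequality adapted to the weight $r^{N-1}$. Fix $u \in \mathcal{A}$; by density it suffices to argue for $u$ smooth on $(0,R]$ with $u(R) = 0$. For $r \in (0,R]$ write
\begin{equation*}
	r^{m} u(r) = -\int_{r}^{R} \frac{d}{dt}\bigl( t^{m} u(t) \bigr)\, dt
	= -\int_{r}^{R} m t^{m-1} u(t)\, dt - \int_{r}^{R} t^{m} u'(t)\, dt.
\end{equation*}
The second integral is handled by Cauchy--Schwarz against the weight: writing $t^{m} u'(t) = t^{m - (N-1)/2} \cdot t^{(N-1)/2} u'(t)$ one gets
\begin{equation*}
	\abs[\Big]{\int_{r}^{R} t^{m} u'(t)\, dt}
	\leq \left( \int_{0}^{R} t^{2m - (N-1)}\, dt \right)^{1/2} \normh{u},
\end{equation*}
and the exponent integral $\int_{0}^{R} t^{2m-N+1}\,dt$ converges precisely because $m > (N-2)/2$, i.e. $2m - N + 1 > -1$.

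For the first integral I would bound $\abs{u(t)}$ by $\norml[\infty]{r^{m}u} \cdot t^{-m}$ on $(0,R]$ — but this would make the estimate circular, so instead I would use Hardy-type control. The cleaner route: estimate $\int_{r}^{R} m t^{m-1} \abs{u(t)}\, dt$ by splitting $t^{m-1} \abs{u} = t^{m - (N-1)/2 - 1} \cdot t^{(N-3)/2} \abs{u} \cdot t$; more directly, write $t^{m-1}\abs{u(t)} = \bigl( t^{(N-3)/2} \abs{u(t)} \bigr)\bigl( t^{m - (N-1)/2} \bigr)$ and apply Cauchy--Schwarz to obtain
\begin{equation*}
	\int_{0}^{R} m t^{m-1} \abs{u(t)}\, dt
	\leq m \left( \int_{0}^{R} t^{N-3} u(t)^{2}\, dt \right)^{1/2}
	\left( \int_{0}^{R} t^{2m - N + 1}\, dt \right)^{1/2},
\end{equation*}
and now Lemma \ref{estimate1} (Hardy's inequality) bounds the first factor by $\frac{2}{N-2}\normh{u}$, while the second factor converges under $m > (N-2)/2$ exactly as before. (If $N = 2$ one must argue slightly differently since Lemma \ref{estimate1} is stated for $N \geq 3$; in that borderline case $m > 0$ and one can integrate $t^m u$ against $u'$ after an integration by parts, or simply invoke the one-dimensional Sobolev embedding on $(0,R)$ after the change of variable $s = t^{m+1}$.)

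Combining the two estimates and taking the supremum over $r \in (0,R]$ gives
\begin{equation*}
	\norml[\infty]{r^{m}u} \leq \left( \frac{2m}{N-2} + 1 \right)
	\left( \int_{0}^{R} t^{2m-N+1}\, dt \right)^{1/2} \normh{u}
	= C(N,m,R)\, \normh{u},
\end{equation*}
with $C(N,m,R) = \bigl( \tfrac{2m}{N-2} + 1 \bigr)\bigl( \tfrac{R^{2m-N+2}}{2m-N+2} \bigr)^{1/2}$. Finally, the passage from smooth functions to general $u \in \mathcal{A}$ follows from the fact that the estimate shows convergence in $\normh{\cdot}$ implies uniform convergence of $r^m u_n$, so the inequality persists in the limit. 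I expect the main (minor) obstacle to be handling the endpoint behaviour as $r \to 0^{+}$: one must confirm that $r^m u(r) \to 0$, or at least stays bounded, so that the identity $r^m u(r) = -\int_r^R (t^m u)'\,dt$ is legitimate — this is where the integrability of $\int_0 t^{2m-N+1}\,dt$ and Hardy's inequality together do the work, ensuring $(t^m u)' \in L^1(0,R)$ so the integral from $0$ is absolutely convergent and the limit exists.
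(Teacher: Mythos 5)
Your argument is correct for $N\ge3$, but it is somewhat more elaborate than it needs to be and diverges from the route the cited references take. The paper does not prove Lemma \ref{estimate2}, delegating to Corollary~2 of Bonheure--Gomes--Sanchez and Lemma~2.2 of Coelho--Corsato--Rivetti, where the argument is the shorter one: from $u(R)=0$ one writes $|u(r)| = \labs{\int_r^R u'(t)\,dt}$, inserts the weight as $u'(t) = t^{-(N-1)/2}\cdot t^{(N-1)/2}u'(t)$, applies Cauchy--Schwarz, and obtains directly
\begin{equation*}
	|u(r)| \le \Bigl(\int_r^R t^{-(N-1)}\,dt\Bigr)^{1/2}\normh{u},
\end{equation*}
which for $N\ge3$ gives $|u(r)|\le (N-2)^{-1/2}r^{-(N-2)/2}\normh{u}$, so $r^m|u(r)|\le R^{\,m-(N-2)/2}(N-2)^{-1/2}\normh{u}$ for any $m\ge(N-2)/2$; for $N=2$ one gets $|u(r)|\le \sqrt{\log(R/r)}\,\normh{u}$ and $r^m\sqrt{\log(R/r)}$ is bounded on $(0,R]$ for any $m>0$. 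That route handles both dimensions uniformly, needs no Hardy input, and gives a constant that stays bounded as $m\downarrow(N-2)/2$.

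Your proof instead differentiates $t^m u$, splits the two resulting integrals, and invokes Hardy's inequality (Lemma~\ref{estimate1}) to control $\int m t^{m-1}u$. That works and is correct for $N\ge3$, but it buys you a worse constant (your $C(N,m,R)$ blows up as $m\downarrow(N-2)/2$ through the factor $(2m-N+2)^{-1/2}$, whereas the lemma needs a nontrivial estimate precisely near that endpoint) and, more importantly, it genuinely fails when $N=2$: Lemma~\ref{estimate1} is stated only for $N\ge3$, and the Hardy inequality $\int_0^R t^{-1}u^2 \lesssim \int_0^R t\,|u'|^2$ is false in general, so your term $\int m t^{m-1}u$ is not controlled by this scheme. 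Your parenthetical remedy for $N=2$ is only a gesture; since the lemma asserts the result for $N\ge2$, as written your proof has a real gap at $N=2$. Finally, the closing discussion about density and about whether $r^m u(r)\to 0$ is unnecessary: since $u\in W^{1,1}_{\mathrm{loc}}(0,R]$, the map $t\mapsto t^m u(t)$ is absolutely continuous on each $[r,R]$ with $r>0$, and $u(R)=0$, so the identity $r^m u(r) = -\int_r^R (t^m u)'\,dt$ holds directly for every $r\in(0,R]$ and one simply takes the supremum over $r$.
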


The following Lemma is the Sobolev embedding of $H^{1}_{0}(B(0,R))$ in
$L^{p}(B(0,R))$ in case of radial functions.
\begin{lem}[Proposition 3 of \cite{Bonheure_Gomes_Sanchez_2005} and 
	Lemma 2.3 of \cite{Coelho_Corsato_Rivetti_2014}]
	\label{estimate3}
	If $N \geq 3$, $p \in [1, \frac{2N}{N-2})$ and $N = 2$, $p \in [1,
	+\infty)$. Then there exists a constant $C(N,p,R)$ such that for
	any $u\in \mathcal{A}$,
	\begin{equation}
		\label{eq:poincare}
		\norml[N-1,p]{u}^{p} \leq C(N,p,R) \normh{u}^{p}.
	\end{equation}
\end{lem}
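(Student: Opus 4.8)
The plan is to deduce the inequality directly from the two facts already at hand — Hardy's inequality (Lemma \ref{estimate1}) and the pointwise decay estimate $\norml[\infty]{r^{m}u} \le C(N,m,R)\normh{u}$ for $m > (N-2)/2$ (Lemma \ref{estimate2}) — splitting the argument according to whether $p = 2$, $1 \le p < 2$, or $2 < p < \tfrac{2N}{N-2}$. No density argument is needed, since Lemma \ref{estimate2} already furnishes a pointwise bound for every $u \in \mathcal{A}$.

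For $p = 2$ I would just note that $r^{N-1} = r^{N-3}r^{2} \le R^{2}r^{N-3}$ on $(0,R)$, so that by Lemma \ref{estimate1}
\begin{equation*}
	\int_{0}^{R} r^{N-1} u^{2}\,dr \le R^{2}\int_{0}^{R} r^{N-3} u^{2}\,dr \le \frac{4R^{2}}{(N-2)^{2}}\normh{u}^{2},
\end{equation*}
which is the claim with $C(N,2,R) = 4R^{2}/(N-2)^{2}$. For $1 \le p < 2$ the measure $r^{N-1}\,dr$ on $(0,R)$ is finite, with total mass $R^{N}/N$, so Hölder's inequality (the inclusion $L^{2}_{N-1}\hookrightarrow L^{p}_{N-1}$ on a finite-measure space) gives $\norml[N-1,p]{u} \le (R^{N}/N)^{\frac1p-\frac12}\,\norml[N-1,2]{u}$, and the case $p = 2$ just settled finishes it.

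The substantive case is $2 < p < \tfrac{2N}{N-2}$. This bound on $p$ is equivalent to $(N-2)(p-2) < 4$, hence to $\tfrac{N-2}{2} < \tfrac{2}{p-2}$, so I may fix an exponent $m$ with $\tfrac{N-2}{2} < m \le \tfrac{2}{p-2}$. For this $m$, Lemma \ref{estimate2} gives $\abs{u(r)} \le C(N,m,R)\,r^{-m}\normh{u}$ for all $r \in (0,R)$. Writing $\abs{u}^{p} = \abs{u}^{2}\abs{u}^{p-2}$ and using this bound on the factor $\abs{u}^{p-2}$,
\begin{equation*}
	\int_{0}^{R} r^{N-1}\abs{u}^{p}\,dr \le C(N,m,R)^{p-2}\normh{u}^{p-2}\int_{0}^{R} r^{N-1-m(p-2)}\abs{u}^{2}\,dr .
\end{equation*}
Since $m(p-2) \le 2$ we have $N-1-m(p-2) \ge N-3$, so $r^{N-1-m(p-2)} = r^{N-3}\,r^{2-m(p-2)} \le R^{2-m(p-2)}\,r^{N-3}$ on $(0,R)$; substituting and applying Hardy's inequality yields $\int_{0}^{R} r^{N-1}\abs{u}^{p}\,dr \le C(N,m,R)^{p-2}R^{2-m(p-2)}\tfrac{4}{(N-2)^{2}}\normh{u}^{p}$, as required. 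When $N = 2$ one has $\tfrac{N-2}{2} = 0$, so Lemma \ref{estimate2} is available for every $m > 0$; combined with the elementary weighted Poincaré inequality $\int_{0}^{R} r\,u^{2}\,dr \le C\normh{u}^{2}$ (valid because $u(R) = 0$) the same three-step scheme covers all $p \in [1,+\infty)$.

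The only genuine obstacle is the choice of $m$ in the last case: the interval $\bigl(\tfrac{N-2}{2},\tfrac{2}{p-2}\bigr]$ is nonempty precisely when $p$ stays strictly below the critical Sobolev exponent $\tfrac{2N}{N-2}$ — which is exactly the hypothesis of the Lemma — and this is where the restriction on $p$ is used. Everything else is Hölder's inequality together with elementary bookkeeping, and the resulting constant $C(N,p,R)$ can be written down explicitly in terms of the constants of Lemmas \ref{estimate1} and \ref{estimate2}.
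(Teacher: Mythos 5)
The paper does not actually prove Lemma \ref{estimate3}: it cites Proposition~3 of \cite{Bonheure_Gomes_Sanchez_2005} and Lemma~2.3 of \cite{Coelho_Corsato_Rivetti_2014} in the bracket of the statement and gives no argument. Your self-contained derivation from Lemma \ref{estimate1} (radial Hardy) and Lemma \ref{estimate2} (pointwise $r^{m}$ decay) is therefore a genuine addition, and for $N\geq 3$ --- the only case the paper ever uses --- it is correct. The three sub-cases are all fine: for $p=2$, $r^{N-1}\leq R^{2}r^{N-3}$ plus Hardy; for $1\leq p<2$, Hölder on the finite measure $r^{N-1}\,dr$ reduces to $p=2$; for $2<p<\tfrac{2N}{N-2}$ the equivalence $p<\tfrac{2N}{N-2}\iff (N-2)(p-2)<4$ makes the interval $\bigl(\tfrac{N-2}{2},\tfrac{2}{p-2}\bigr]$ nonempty, and with $m$ chosen there the weight $r^{N-1-m(p-2)}\leq R^{2-m(p-2)}r^{N-3}$ is again handled by Hardy. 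The constant is explicit in terms of the constants of Lemmas \ref{estimate1} and \ref{estimate2}, as you say.

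The one place that is not airtight is $N=2$. There step 3 of the ``same three-step scheme'' produces $\int_{0}^{R}r^{1-m(p-2)}u^{2}\,dr$ with $1-m(p-2)<1$; the inequality $\int_{0}^{R}r\,u^{2}\,dr\leq C\normh{u}^{2}$ you invoke is only the endpoint $\beta=1$, and you cannot peel off $r^{1-m(p-2)}\leq R^{2-m(p-2)}r^{-1}$ as in the $N\geq 3$ computation because the $\beta=-1$ weight (i.e.\ Hardy with $N=2$) is false. What you actually need is the whole one-parameter family $\int_{0}^{R}r^{\beta}u^{2}\,dr\leq C_{\beta}\normh{u}^{2}$ for every $\beta>-1$, which does hold: from $u(R)=0$ and Cauchy--Schwarz, $\abs{u(r)}\leq\bigl(\log(R/r)\bigr)^{1/2}\normh{u}$, and $\int_{0}^{R}r^{\beta}\log(R/r)\,dr<\infty$ for $\beta>-1$. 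That extra lemma should be stated if you want to claim the $N=2$ half of the statement. Since the paper works with $N\geq 3$ throughout, this is a peripheral gap rather than a defect in the part that matters.
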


\begin{remark}
	The inequality \eqref{eq:poincare} is actually the
	\emph{Poincar\'e inequality}. Shows that the norm $\normh{u}$ is
	actually equivalent -- for radial functions -- to the usual norm
	in $H^{1}_{0}(B(0,R))$.
\end{remark}

We recall the following compact embedding, equivalent to the Sobolev
embedding of $H^{1}_{0}$ in $L^{p}$ for the ball of radius $R$.
\begin{lem}
	\label{estimate}
	Assume that $p\in[1, \frac{2N}{N-2})$ if $N\geq 3$ and $p\in[1,
	+\infty)$ if $N=2$. Then the following embedding is compact:
	\begin{equation*}
		\mathcal{A} \hookrightarrow L^{p}_{N-1}.
	\end{equation*}
\end{lem}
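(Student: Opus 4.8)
The statement is the radial form of the Rellich--Kondrachov theorem, and the plan is to prove it by hand from the one-dimensional structure, using the pointwise bound of Lemma \ref{estimate2} to control the behaviour near $r=0$. (Alternatively one can observe that $\mathcal{A}$ is, up to the constant $\omega_{N-1}^{1/2}$, the closed subspace of radial functions in $H^{1}_{0}(B(0,R))$ and that $L^{p}_{N-1}$ is, up to $\omega_{N-1}^{1/p}$, the radial part of $L^{p}(B(0,R))$, so the claim is merely the restriction of the classical compact embedding to a closed subspace; but the direct argument is closer in spirit to the preceding lemmas.) So let $\{u_{n}\} \subset \mathcal{A}$ be bounded, $\normh{u_{n}} \le M$, and the goal is to extract a subsequence converging in $L^{p}_{N-1}$.

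\textbf{Step 1: compactness away from the origin.} For each $\delta \in (0,R)$ one has $\int_{\delta}^{R} \abs{u_{n}'}^{2}\,dr \le \delta^{1-N}\normh{u_{n}}^{2} \le \delta^{1-N}M^{2}$, and $u_{n}$ is bounded in $L^{2}(\delta,R)$ by Lemma \ref{estimate3} applied with $p=2$; hence $\{u_{n}\}$ is bounded in $H^{1}(\delta,R)$, so by the one-dimensional Sobolev embedding it is equicontinuous and equibounded on $[\delta,R]$. Arzel\`a--Ascoli together with a diagonal argument over $\delta = 1/k$, $k \in \mathbb{N}$, then yields a subsequence (not relabeled) converging uniformly on each $[\delta,R]$ to some function $u$; passing to a further subsequence, $u_{n} \rightharpoonup u$ in $\mathcal{A}$, so $u \in \mathcal{A}$ with $\normh{u} \le M$, and the weak limit agrees with the uniform limit.

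\textbf{Step 2: uniform smallness near the origin, and conclusion.} Since $p < \frac{2N}{N-2}$ one can fix $m$ with $\frac{N-2}{2} < m < \frac{N}{p}$; Lemma \ref{estimate2} then gives $\abs{u_{n}(r)} \le C(N,m,R)\,M\,r^{-m}$ for every $r \in (0,R]$, and the same for $u$. Therefore
\begin{equation*}
	\int_{0}^{\delta} r^{N-1}\abs{u_{n}}^{p}\,dr \le (C(N,m,R)M)^{p}\int_{0}^{\delta} r^{N-1-pm}\,dr = (C(N,m,R)M)^{p}\,\frac{\delta^{N-pm}}{N-pm},
\end{equation*}
which tends to $0$ as $\delta \to 0$ uniformly in $n$, and likewise with $u$ in place of $u_{n}$. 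Given $\varepsilon > 0$, choose $\delta$ so that this bound, together with the analogous one for $u$, makes $\int_{0}^{\delta} r^{N-1}\abs{u_{n}-u}^{p}\,dr < \varepsilon/2$ for all $n$; on $[\delta,R]$ the uniform convergence of Step 1 gives $\int_{\delta}^{R} r^{N-1}\abs{u_{n}-u}^{p}\,dr \le R^{N}\bigl(\sup_{[\delta,R]}\abs{u_{n}-u}\bigr)^{p} < \varepsilon/2$ for $n$ large. Adding the two contributions yields $\norml[N-1,p]{u_{n}-u}^{p} < \varepsilon$ for $n$ large, i.e. $u_{n}\to u$ in $L^{p}_{N-1}$.

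\textbf{Main obstacle.} The only genuine difficulty is the degeneracy of the weight at $r=0$: with respect to $r^{N-1}\,dr$ the interval $(0,R)$ behaves, for integration purposes, like an unbounded interval, so equicontinuity on the sets $[\delta,R]$ does not by itself give compactness, and one needs the quantitative decay $\abs{u(r)} \le C\,r^{-m}$ from Lemma \ref{estimate2} to render the contribution near $0$ uniformly negligible. It is precisely this step that forces the range $p < \frac{2N}{N-2}$, through the requirement that some admissible $m > \frac{N-2}{2}$ also satisfy $pm < N$.
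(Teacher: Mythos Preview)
Your proof is correct. The paper itself does not prove this lemma: it merely recalls it, with the remark preceding the statement that it is ``equivalent to the Sobolev embedding of $H^{1}_{0}$ in $L^{p}$ for the ball of radius $R$'', i.e.\ the classical Rellich--Kondrachov theorem restricted to the closed subspace of radial functions. You mention this very route in your parenthetical remark, so you are aware of the paper's viewpoint; what you supply in addition is a self-contained one-dimensional argument. The gain of your approach is that it avoids importing the $N$-dimensional compactness theorem and instead makes the role of the critical exponent transparent: the constraint $\frac{N-2}{2} < m < \frac{N}{p}$, needed to invoke Lemma~\ref{estimate2} while keeping $r^{N-1-pm}$ integrable at $0$, is exactly the condition $p < \frac{2N}{N-2}$. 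The paper's route is of course shorter, at the cost of treating the result as a black box.
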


We finally introduce the set $K$, in which we will look for 
solutions of our problem.
\begin{equation*}
	K = \settc{u \in \mathcal{A}}{\norml[\infty]{u'} \leq 1}.
\end{equation*}
Remark that $K \subset W^{1,\infty}(0,R) \subset W^{1,2}(0,R)$. 
\begin{lem}
	\label{Kclose}
	$K$ is a closed, convex and bounded subset of $\mathcal{A}$. We 
	also have that 
	\begin{equation*}
		\norml[\infty]{u} \leq R \qquad \text{ for all } u \in K.
	\end{equation*}
\end{lem}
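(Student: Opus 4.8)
The plan is to verify each asserted property of $K = \settc{u \in \mathcal{A}}{\norml[\infty]{u'} \leq 1}$ in turn, all of which follow from elementary considerations once the right inequality is in hand. \emph{Convexity} is immediate: if $u, v \in K$ and $t \in [0,1]$, then $\norml[\infty]{(tu + (1-t)v)'} = \norml[\infty]{tu' + (1-t)v'} \leq t\norml[\infty]{u'} + (1-t)\norml[\infty]{v'} \leq 1$, so the segment stays in $K$. For \emph{boundedness in $\mathcal{A}$}, I would observe that $\normh{u}^{2} = \int_{0}^{R} r^{N-1}\abs{u'}^{2}\,dr \leq \int_{0}^{R} r^{N-1}\,dr = R^{N}/N$ for every $u \in K$, so $K$ is contained in the ball of radius $R^{N/2}/\sqrt{N}$ in $\mathcal{A}$.

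For the \emph{pointwise bound} $\norml[\infty]{u} \leq R$, I would use that $u(R) = 0$ together with $\abs{u'} \leq 1$ a.e.: for $r \in (0,R]$, writing $u(r) = -\int_{r}^{R} u'(t)\,dt$ (which is justified because $u \in W^{1,1}_{\mathrm{loc}}(0,R]$ and the trace $u(R) = 0$ is attained), we get $\abs{u(r)} \leq \int_{r}^{R}\abs{u'(t)}\,dt \leq R - r \leq R$. This is the cleanest route; one could alternatively invoke Lemma \ref{estimate2} with $m = 0 > (N-2)/2$ when $N = 2$, but the direct fundamental-theorem-of-calculus argument works for all $N \geq 2$ and is self-contained.

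For \emph{closedness}, suppose $u_{n} \in K$ with $u_{n} \to u$ in $\mathcal{A}$; I must show $u \in K$, i.e. $\norml[\infty]{u'} \leq 1$. Convergence in $\mathcal{A}$ gives $u_{n}' \to u'$ in $L^{2}_{N-1}(0,R)$, hence (passing to a subsequence) $u_{n}'(r) \to u'(r)$ for a.e. $r \in (0,R)$; since $\abs{u_{n}'(r)} \leq 1$ for a.e. $r$, the limit satisfies $\abs{u'(r)} \leq 1$ a.e., so $\norml[\infty]{u'} \leq 1$ and $u \in K$. Alternatively, one can note that the constraint $\abs{u'} \leq 1$ a.e. is preserved under weak $L^{2}_{N-1}$-limits because $\settc{w \in L^{2}_{N-1}}{\abs{w} \leq 1 \text{ a.e.}}$ is convex and strongly closed, hence weakly closed. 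The only point requiring a little care is the passage from $L^{2}_{N-1}$-convergence to a.e.-convergence along a subsequence and the fact that this suffices to conclude (a property of a single function need not be tested along the original sequence), but this is standard; I expect no genuine obstacle here.
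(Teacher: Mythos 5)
Your proof is correct, and for convexity, boundedness, and the pointwise bound $\norml[\infty]{u} \leq R$ it coincides with the paper's argument (Lipschitz constant $1$, hence $\normh{u}^{2} \leq R^{N}/N$ and $\abs{u(r)} \leq \int_{r}^{R}\abs{u'}\,dt \leq R$). Where you diverge is in the proof of closedness. The paper observes that $K$ is equibounded and equicontinuous (uniform Lipschitz constant $1$), applies Ascoli--Arzel\`a to extract a uniformly convergent subsequence, identifies the uniform limit with the $\mathcal{A}$-limit $u$, and concludes that $u$ is $1$-Lipschitz and hence in $K$. You instead work directly with the gradient: $u_{n} \to u$ in $\mathcal{A}$ means $u_{n}' \to u'$ in $L^{2}_{N-1}(0,R)$, so a subsequence of $u_{n}'$ converges a.e.\ (the weight $r^{N-1}$ is positive on $(0,R)$, so a.e.\ w.r.t.\ the weighted and the Lebesgue measures agree), and the bound $\abs{u_{n}'}\leq 1$ passes to the a.e.\ limit; alternatively, $\{w : \abs{w}\le 1 \text{ a.e.}\}$ is convex and strongly closed in $L^{2}_{N-1}$, hence weakly closed. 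Your route is more elementary in that it avoids the compactness theorem entirely and only needs the standard subsequence-a.e.\ argument (and you correctly note that working along a subsequence suffices, since the conclusion concerns only $u$); it also sidesteps the small notational looseness in the paper, which reuses the symbol $u$ for the uniform limit before identifying it with the $\mathcal{A}$-limit. The paper's Ascoli--Arzel\`a route has the side benefit of establishing uniform convergence of a subsequence, which is a stronger conclusion, but that extra strength is not used for this lemma.
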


\begin{proof}
	The convexity is immediate. Let us remark that the functions in
	$K$ are Lipschitz with Lipschitz constant $1$, and $K \subset
	W^{1,\infty}(0,R)$. As a consequence we have that $\normh{u}^{2}
	\leq \int_{0}^{R} r^{N-1} \, dr = \frac{R^{N}}{N}$ and also
	$\abs{u(r)} \leq \int_{r}^{R} \abs{u'(r)} \, dr \leq R$. Then the
	family of functions $K$ is equicontinuous and equibounded.
	
	Let $\{u_{n}\} \subset K$, $u_{n} \to u$ in $\mathcal{A}$. We can
	apply Ascoli-Arzel\`a to deduce that a subsequence converges
	uniformly to a function $u$, and also $u$ is 1-Lipschitz, and we
	immediately deduce that $\norml[\infty]{u'} \leq 1$ and $u \in K
	\subset W^{1,\infty} \subset \mathcal{A}$.
\end{proof}

\section{The variational problem}
\label{sec:variational}

In this section we introduce the notion of weak solution of problem
\eqref{RMCE} and the related variational problem, and prove some of
their properties.
 
We will always assume in this section that $f$ satisfies \ref{it:fC},
\ref{it:fat0}, \ref{it:AR} and that $b$ satisfies \ref{it:b}.

\begin{definition}
	\label{weak}
	A weak solution of \eqref{RMCE} is a function $u\in K$ such that
	for all $\varphi \in K$,
	\begin{equation}
		\label{eq:weaksol}
		\int^{R}_{0} \frac{r^{N-1} u' \varphi'}{\sqrt{1 -
		\abs{u'}^{2}}} \, dr = \int^{R}_{0} r^{N-1} \bigl(\lambda
		b(r)\abs{u}^{q-2}u + f(r,u) \bigr) \varphi \, dr.
	\end{equation}
\end{definition}

\begin{remark}
	\eqref{eq:weaksol} holds for all $\varphi \in K$ if and only if 
	it holds for all $\varphi \in W^{1,\infty} \bigcap \mathcal{A}$.
\end{remark}

\begin{lem}
	\label{C1}
	If $u$ is a weak solution, then $u \in C^{1}([0,R])$,
	\begin{align}
		&\int^{R}_{0} \frac{r^{N-1} \abs{u'}^{2}}{\sqrt{1 -
		\abs{u'}^{2}}} \, dr < +\infty \label{eq:stimau2}, \\
		&\int^{R}_{0} \frac{r^{N-1}\abs{u'}}{\sqrt{1-\abs{u'}^{2}}} \,
		dr < +\infty \label{eq:stimau1}
	\end{align}
	and there exists $0 < \varepsilon < 1$ such that $\abs{u'(r)}< 1 -
	\varepsilon$ for all $r\in [0,R]$.
	
	Moreover for a.e. $r \in (0,R)$,
	\begin{equation}
		\label{eq:equationae}
		-\left( \frac{r^{N-1}u'}{\sqrt{1 - \abs{u'}^{2}}}\right)' =
		r^{N-1}\bigl(\lambda b(r)\abs{u}^{q-2}u + f(r,u)\bigr).
	\end{equation}
\end{lem}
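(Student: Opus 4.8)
The plan is to first extract from the weak formulation \eqref{eq:weaksol} that $u$ solves an ODE in the distributional sense, then bootstrap regularity. Set $w(r) := \dfrac{r^{N-1}u'(r)}{\sqrt{1-\abs{u'}^{2}}}$, which is well defined a.e. since $u \in K$ means $\abs{u'} \leq 1$ a.e.; a priori $w$ is only measurable. Taking test functions $\varphi \in C^{\infty}_{c}(0,R) \subset W^{1,\infty} \cap \mathcal{A}$ in \eqref{eq:weaksol} (legitimate by the Remark after Definition \ref{weak}) shows that $w' = -\,r^{N-1}\bigl(\lambda b(r)\abs{u}^{q-2}u + f(r,u)\bigr)$ in $\mathcal{D}'(0,R)$. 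The right-hand side is in $L^{\infty}_{\mathrm{loc}}(0,R)$: indeed $u \in C([0,R])$ with $\norml[\infty]{u} \leq R$ by Lemma \ref{Kclose} (so the $1$-Lipschitz function $u$ is continuous and bounded), $b$ is bounded by \ref{it:b}, $\abs{u}^{q-2}u = \abs{u}^{q-1}\mathrm{sgn}(u)$ is continuous since $q > 1$, and $f(r,u(r))$ is continuous by \ref{it:fC}. Hence $w \in W^{1,1}_{\mathrm{loc}}(0,R)$ with a bounded (hence continuous after modification) derivative, so $w$ agrees a.e. with an absolutely continuous — in fact $C^{1}$ on $(0,R)$, and continuous up to the boundary — function, and \eqref{eq:equationae} holds a.e.

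Next I would recover $u'$ pointwise from $w$. The map $t \mapsto \dfrac{t}{\sqrt{1-t^{2}}}$ is a strictly increasing homeomorphism of $(-1,1)$ onto $\mathbb{R}$, with inverse $\tau \mapsto \dfrac{\tau}{\sqrt{1+\tau^{2}}}$. Thus for $r \in (0,R)$ one has, a.e., $u'(r) = \dfrac{w(r)/r^{N-1}}{\sqrt{1 + (w(r)/r^{N-1})^{2}}} = \dfrac{w(r)}{\sqrt{r^{2(N-1)} + w(r)^{2}}}$, and since $w$ is continuous on $(0,R)$ this exhibits $u'$ as (a.e. equal to) a continuous function on $(0,R)$; therefore $u \in C^{1}(0,R)$. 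Moreover $\abs{u'(r)} = \dfrac{\abs{w(r)}}{\sqrt{r^{2(N-1)}+w(r)^{2}}} < 1$ strictly for every $r$ where $w$ is finite, which is all of $(0,R)$.

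The remaining points are the behaviour at the endpoints and the uniform bound $\abs{u'} \leq 1-\varepsilon$. Integrating $w' \in L^{\infty}_{\mathrm{loc}}$ and using that $w$ extends continuously to $[0,R]$ except possibly at $0$, I would show $w$ has a limit as $r \to 0^{+}$; but the right-hand side of \eqref{eq:equationae} is bounded near $0$ (all factors are bounded, and $r^{N-1} \to 0$), so $w(r) = w(r_{0}) - \int_{r_{0}}^{r} r^{N-1}(\cdots)\,dr$ converges, and in fact $w(r)/r^{N-1} \to 0$ as $r \to 0$ because $\abs{w(r) - w(0)} = O(r^{N})$ while we expect $w(0)=0$ — this forces $u'(0)$ to exist and equal $0$, giving $u \in C^{1}([0,R])$ consistent with the boundary condition $u'(0)=0$. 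Finally, $\abs{u'}$ is continuous on the compact $[0,R]$ and strictly less than $1$ everywhere, so it attains a maximum $1-\varepsilon < 1$; this gives the uniform bound, and then $\dfrac{1}{\sqrt{1-\abs{u'}^{2}}} \leq \dfrac{1}{\sqrt{2\varepsilon - \varepsilon^{2}}}$ is bounded, whence \eqref{eq:stimau2} and \eqref{eq:stimau1} follow immediately from $\int_{0}^{R} r^{N-1}\,dr < \infty$.

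The main obstacle I expect is the analysis at $r = 0$: the weighted structure means the ODE is singular there, so one must argue carefully that $w(r)/r^{N-1}$ does not blow up. The key is that the source term $r^{N-1}(\lambda b\abs{u}^{q-2}u + f)$ is $O(r^{N-1})$ near $0$ since everything except the weight is bounded, which forces $w(r) = O(r^{N})$ and hence $w(r)/r^{N-1} = O(r) \to 0$; all the boundary conclusions follow from this. (One should double-check that $w(0) = 0$ rather than merely finite — this is where the $C^{1}$ up to $0$ and $u'(0)=0$ come from.)
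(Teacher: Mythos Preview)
Your route reverses the paper's order: the paper first tests \eqref{eq:weaksol} with $\varphi=u$ to obtain \eqref{eq:stimau2} directly (nonnegative integrand, finite right-hand side), then gets \eqref{eq:stimau1} by splitting on $\{|u'|\le 1/2\}$ and its complement, and only afterwards carries out the ODE/regularity analysis; you instead run the ODE analysis first and deduce the integral bounds at the end as trivial consequences of the uniform estimate $|u'|\le 1-\varepsilon$. Your explicit inversion $u'=w/\sqrt{r^{2(N-1)}+w^{2}}$ is a clean device the paper does not use.

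There are, however, two real gaps. First, asserting $w'=-r^{N-1}\bigl(\lambda b|u|^{q-2}u+f(r,u)\bigr)$ in $\mathcal{D}'(0,R)$ presupposes that $w\in L^{1}_{\mathrm{loc}}(0,R)$, which is not automatic from $u\in K$: on the set $\{|u'|=1\}$, which a priori could have positive measure, $w$ is infinite. This is precisely what the paper's first step (test with $\varphi=u$ to get \eqref{eq:stimau2}, then the splitting argument to get \eqref{eq:stimau1}, hence $w\in L^{1}(0,R)$) rules out; without it your bootstrap to $w\in W^{1,1}_{\mathrm{loc}}$ does not start. Second, you correctly flag that $w(0)=0$ is the crux near the origin, but your test functions $\varphi\in C^{\infty}_{c}(0,R)$ all vanish at $0$ and therefore cannot detect $w(0)$: the distributional identity on $(0,R)$ fixes $w$ only up to an additive constant. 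If $w(0)=c\neq 0$ then your inversion formula gives $|u'(r)|\to 1$ as $r\to 0^{+}$, so $|u'|$ does not attain its supremum on $[0,R]$ and your compactness argument for the uniform bound collapses. The paper pins down $w(0)=0$ by integrating \eqref{eq:weaksol} by parts against a general $\varphi\in K$ with $\varphi(0)\neq 0$ (legitimate once one knows $w\in W^{1,\infty}(0,R)$), obtaining $-w(0)\varphi(0)=0$.
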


\begin{proof}
	The following proof is similar to \cite[Proposition
	3.3]{Bonheure_dAvenia_Pomponio_2016}.
	
	Since $u\in K$, $f$ is continuous and $\abs{u(r)} \leq R$ and we
	have that
	\begin{equation*}
		r^{N-1}\left(\lambda b(r)\abs{u(r)}^{q-2}u + f(r,u(r))\right) \in
		L^{\infty}(0,R)
	\end{equation*}

	Taking $\varphi=u$ in Definition \ref{weak}, follows that
	\begin{equation*}
		\int^{R}_{0} \frac{r^{N-1}\abs{u'}^{2}}{\sqrt{1-\abs{u'}^{2}}}
		\, dr = \int^{R}_{0}r^{N-1}\left(\lambda
		b(r)\abs{u}^{q}+f(r,u)u\right)\, dr < +\infty.
	\end{equation*}
	and
	\begin{align*}
		\int^{R}_{0} \frac{r^{N-1}\abs{u'}}{\sqrt{1-\abs{u'}^{2}}} \,
		dr &= \int_{(0,R) \cap \{\abs{u'} \leq\frac{1}{2}\}}
		\frac{r^{N-1}\abs{u'}}{\sqrt{1 - \abs{u'}^{2}}} \, dr \\
		&\qquad + \int_{(0,R)\cap \{\abs{u'} >\frac{1}{2}\}}
		\frac{r^{N-1}\abs{u'}}{\sqrt{1-\abs{u'}^{2}}}\, dr\\
		&\leq C\left(\int^{R}_{0}r^{N-1}\,
		dr+\int^{R}_{0}\frac{r^{N-1}\abs{u'}^{2}}{\sqrt{1-\abs{u'}^{2}}}\,
		dr\right) < +\infty,
	\end{align*}
	where $C$ is a constant, and \eqref{eq:stimau2} and
	\eqref{eq:stimau1} follow.

	From the fact that $u$ is a weak solution, see \eqref{eq:weaksol},
	we deduce that
	\begin{equation*}
		h_{1}(r) \equiv \frac{r^{N-1}u'}{\sqrt{1-\abs{u'}^{2}}} \in
		W^{1,\infty}(0,R),
	\end{equation*}
	and that for a.e. $r \in (0,R)$,
	\begin{equation}
		\label{eq:equationaeh}
		-h_{1}'(r) = -\left( \frac{r^{N-1}u'}{\sqrt{1 -
		\abs{u'}^{2}}}\right)' = r^{N-1}\bigl(\lambda
		b(r)\abs{u}^{q-2}u + f(r,u)\bigr) \leq Cr^{N-1}
	\end{equation}
	and \eqref{eq:equationae} follows. The continuous embedding of
	$W^{1,\infty}(0,R)$ in $C[0,R]$ implies that
	\begin{equation*}
		h_{1}(r) := \frac{r^{N-1}u'(r)}{\sqrt{1 - \abs{u'(r)}^{2}}} \in
		C[0,R].
	\end{equation*}
	and
	\begin{equation*}
		\frac{u'(r)}{\sqrt{1 - \abs{u'(r)}^{2}}} :=
		\frac{h_{1}(r)}{r^{N-1}} \in C(0,R].
	\end{equation*}
	Now we study the behavior of $u'(r)$ as $r$ tends to $0^{+}$. Fix
	$\varphi \in K$. Integrating by parts \eqref{eq:weaksol} we deduce
	that
	\begin{equation}
		\label{R0}
		-h_{1}(0) \varphi(0) = \int^{R}_{0} \left(h_{1}'(r) + r^{N-1}
		\left(\lambda b(r)\abs{u(r)}^{q-2}u(r) + f(r,u(r)) \right)
		\right)\varphi\, dr.
	\end{equation}
	and from the equation \eqref{eq:equationaeh} we obtain
	\begin{equation*}
		-h_{1}(0) \varphi(0) = 0,
	\end{equation*}
	which implies that $h_{1}(0) = 0$ since $\varphi \in K$ is
	arbitrary.
	
	Integrating \eqref{eq:equationaeh} from $0$ to $r$ with $0 < r \leq
	R$ that
	\begin{equation*}
		\abs{h_{1}(r)} \leq \int^{r}_{0} t^{N-1} \left(\lambda b_{1}
		\abs{u(t)}^{q-1} + \abs{f(t, u(t))} \right) \, dt \leq C
		\frac{r^{N}}{N}.
	\end{equation*}
	and
	\begin{equation}
		\label{uprime}
		\abs{r^{N-1}u'(r)} \leq \left|\frac{r^{N-1}u'(r)}{\sqrt{1 -
		\abs{u'(r)}^{2}}}\right| \leq C\frac{r^{N}}{N}.
	\end{equation}
	So we have
	\begin{equation*}
		\abs{u'(r)} \leq C \frac{r}{N} \qquad \text{for all } r \in
		[0,R].
	\end{equation*}
	Consequently, $u\in C^{1}[0,R]$ with $u'(0)=0$. Since $h_{1}(r) \in
	C([0,R])$ and $u'(0) = 0$ it follows that there exists $0<
	\varepsilon <1$ such that $\abs{u'(r)} < 1 - \varepsilon$ for all
	$r\in [0,R]$.
\end{proof}

\begin{lem}
	\label{lem:weakclassical}
	Any weak solution $u$ is of class $C^{2}[0,R]$ is a classical
	solution of \eqref{RMCE}.
\end{lem}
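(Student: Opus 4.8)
The plan is to invert the nonlinear factor explicitly and then deal with the one delicate point, the regularity of $u$ at $r=0$, by means of the integral representation of the flux $h_{1}$ obtained in Lemma~\ref{C1}.

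Write $g(r):=\lambda b(r)\abs{u(r)}^{q-2}u(r)+f(r,u(r))$. First I would note that $g\in C([0,R])$: by Lemma~\ref{C1} $u\in C^{1}([0,R])$, $b$ and $f$ are continuous, and $s\mapsto\abs{s}^{q-2}s$ is continuous on $\mathbb{R}$ since $q>1$. Lemma~\ref{C1} gives $h_{1}:=r^{N-1}u'/\sqrt{1-\abs{u'}^{2}}\in W^{1,\infty}(0,R)$ with $h_{1}'(r)=-r^{N-1}g(r)$ for a.e.\ $r$; as the right-hand side is continuous, this upgrades to $h_{1}\in C^{1}([0,R])$ and, together with $h_{1}(0)=0$, yields $h_{1}(r)=-\int_{0}^{r}t^{N-1}g(t)\,dt$.

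Next I would use the $C^{\infty}$ function $\psi(\tau):=\tau/\sqrt{1+\tau^{2}}$, the inverse of $p\mapsto p/\sqrt{1-p^{2}}$ on $(-1,1)$, with $\psi'(\tau)=(1+\tau^{2})^{-3/2}$. From the definition of $h_{1}$, for $r\in(0,R]$ we have $u'(r)=\psi\bigl(h_{1}(r)/r^{N-1}\bigr)$; since $h_{1}\in C^{1}$ and $r^{N-1}>0$ on $(0,R]$, the argument is $C^{1}$ there, hence $u\in C^{2}((0,R])$ and, differentiating (quotient and chain rules, $h_{1}'(r)=-r^{N-1}g(r)$, $h_{1}(r)=-\int_{0}^{r}t^{N-1}g(t)\,dt$),
\begin{equation*}
	u''(r)=\psi'\!\Bigl(\tfrac{h_{1}(r)}{r^{N-1}}\Bigr)\Bigl[-g(r)+\tfrac{N-1}{r^{N}}\int_{0}^{r}t^{N-1}g(t)\,dt\Bigr].
\end{equation*}
The crux is the limit $r\to0^{+}$. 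By continuity of $g$ one has $\tfrac1{r^{N}}\int_{0}^{r}t^{N-1}g(t)\,dt\to g(0)/N$ (bound the difference from $g(0)/N$ by $\tfrac1N\sup_{[0,r]}\abs{g-g(0)}$), while $h_{1}(r)/r^{N-1}=u'(r)/\sqrt{1-\abs{u'(r)}^{2}}\to0$ since $u'(0)=0$, so $\psi'(h_{1}(r)/r^{N-1})\to1$. Hence $u''(r)\to-g(0)+\tfrac{N-1}{N}g(0)=-g(0)/N$, a finite limit.

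To conclude the $C^{2}$ regularity I would invoke the elementary fact that a function in $C([0,R])\cap C^{1}((0,R])$ whose derivative has a finite limit at $0$ is in fact $C^{1}([0,R])$ (apply the fundamental theorem of calculus to $u'$ on $[\varepsilon,r]$, let $\varepsilon\to0$, and divide by $r$); applied to $u'$ this gives $u\in C^{2}([0,R])$ with $u''(0)=-g(0)/N$. Finally, since $h_{1}\in C^{1}([0,R])$ the identity $-h_{1}'(r)=r^{N-1}g(r)$ holds for every $r\in[0,R]$, i.e.\ \eqref{eq:equationae} holds pointwise; with $u'(0)=0$ (Lemma~\ref{C1}) and $u(R)=0$ (as $u\in K$), $u$ is a classical solution of \eqref{RMCE}. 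The only genuine obstacle is the continuity of $u''$ at the origin, which is precisely why one works with $h_{1}(r)=-\int_{0}^{r}t^{N-1}g(t)\,dt$ rather than only with $h_{1}\in C^{1}$.
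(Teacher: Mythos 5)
Your proof is correct and follows essentially the same route as the paper: both arguments hinge on the integral representation $h_{1}(r)=-\int_{0}^{r}t^{N-1}g(t)\,dt$ of the flux and the limit $\tfrac{1}{r^{N}}\int_{0}^{r}t^{N-1}g(t)\,dt\to g(0)/N$ to control regularity at $r=0$. The only (cosmetic) difference is that the paper establishes $C^{1}$ regularity of the transformed quantity $u'/\sqrt{1-\abs{u'}^{2}}$ and then deduces $u'\in C^{1}$, whereas you invert explicitly through the smooth map $\psi(\tau)=\tau/\sqrt{1+\tau^{2}}$ and compute $u''$ directly, which is a slightly more explicit rendering of the same argument.
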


\begin{proof}
	This proof can be found in \cite[Remark
	3.7]{Coelho_Corsato_Rivetti_2014}. We give it here only for
	reader's convenience. Follows from Lemma \ref{C1} that a weak
	solution $u$ is in $C^{1}[0,R]$ and
	$\frac{r^{N-1}u'}{\sqrt{1-\abs{u'}^{2}}} \in C[0,R]$. From the
	equation \eqref{eq:equationae} follows then that
	$\frac{r^{N-1}u'}{\sqrt{1 - \abs{u'}^{2}}} \in C^{1}[0,R]$.
	Now we want to prove $\frac{u'}{\sqrt{1-\abs{u'}^{2}}}\in
	C^{1}[0,R]$. Let
	\begin{equation*}
		h_{2}(r) = \lambda b(r) \abs{u(r)}^{q-2}u(r) + f(r,u(r)).
	\end{equation*}
	For any $0 < r \leq R$, integrating the equation
	\eqref{eq:equationaeh} from $0$ to $r$, we deduce that
	\begin{equation*}
		\frac{u'(r)}{\sqrt{1 - \abs{u'(r)}^{2}}} =
		-\int_{0}^{r}(\frac{s}{r})^{N-1}h_{2}(s)\, ds \qquad 
		\text{for all } r \in (0,R].
	\end{equation*}
	We have that $\frac{u'}{\sqrt{1-\abs{u'}^{2}}} \in C^{1}((0,R]) 
	\cup C([0,R])$. It
	remains to study the behavior of
	$\left(\frac{u'}{\sqrt{1-\abs{u'}^{2}}}\right)'$ as $r$ tends to
	$0^{+}$. Let's show that the following limit holds
	\begin{equation*}
		\lim_{r \to 0^{+}}\frac{1}{r}\frac{u'(r)}{\sqrt{1 -
		\abs{u'(r)}^{2}}} = -\frac{h_{2}(0)}{N}.
	\end{equation*}
	Fix $\tau>0$. By continuity of $h_{2}(r)$ at $0$, there exists
	$r_{1}>0$ such that $\abs{h_{2}(s)-h_{2}(0)}<\tau$ for all $s\in
	[0,r_{1})$. Taking $r\in(0,r_{1})$, we have
	\begin{align*}
		\labs{\frac{h_{2}(0)}{N} + \frac{1}{r} \frac{u'(r)}{\sqrt{1 -
		\abs{u'(r)}^{2}}}} &= \labs{\frac{h_{2}(0)}{N} -
		\frac{1}{r}\int_{0}^{r}(\frac{s}{r})^{N-1}h(s)\, ds} \\
		&\leq \frac{\tau}{r^{N}} \int_{0}^{r}s^{N-1}\, ds =
		\frac{\tau}{N}.
	\end{align*}
	So $\frac{u'}{\sqrt{1-\abs{u'}^{2}}}\in C^{1}([0,R])$ and $u'\in
	C^{1}([0,R])$. Consequently, $u\in C^{2}([0,R])$ is a classical
	solution of \eqref{RMCE}.
\end{proof}

We now give the proof of another basic property of the the nontrivial
solutions of \eqref{RMCE}.
\begin{lem}
	\label{lem:positivity} 
	
	If $u$ is a positive weak solution, then $u'(r) < 0$ for any $r
	\in (0,R]$. If $u$ is a negative solution, then $u'(r)>0$ for any
	$r \in (0,R]$.
\end{lem}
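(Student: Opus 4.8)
The plan is to use the equation in the form \eqref{eq:equationae}, which holds pointwise by Lemma \ref{C1}, together with the boundary datum $h_1(0)=0$ established there. Suppose $u$ is a positive weak solution, so that $u(r)>0$ for $r\in[0,R)$; since also $u(R)=0$ and $u\in C^2$, the function $u$ is not identically zero. Define as in the previous proofs $h_1(r) = \dfrac{r^{N-1}u'(r)}{\sqrt{1-\abs{u'(r)}^2}}$, which is continuous on $[0,R]$ with $h_1(0)=0$. Integrating \eqref{eq:equationaeh} from $0$ to $r$ gives
\begin{equation*}
	h_1(r) = -\int_0^r t^{N-1}\bigl(\lambda b(t)\abs{u(t)}^{q-2}u(t) + f(t,u(t))\bigr)\,dt.
\end{equation*}
First I would observe that the integrand is strictly positive a.e. on $(0,R)$: indeed $u(t)>0$ there, so $\lambda b(t)u(t)^{q-1} > 0$ by \ref{it:b} and $\lambda>0$, while $f(t,u(t)) \geq 0$ by \ref{it:AR} (which gives $sf(r,s)>0$ for $0<\abs{s}\le R$, hence $f(t,u(t))>0$ whenever $u(t)\in(0,R]$, and $f(t,0)=0$ by continuity together with \ref{it:fat0} only at the single endpoint $t=R$). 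Therefore $h_1(r) < 0$ for every $r\in(0,R]$, because the integral of a nonnegative, a.e.\ positive function over $(0,r)$ is strictly positive.

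Next I would transfer this sign to $u'$. Since $h_1(r) = \dfrac{r^{N-1}u'(r)}{\sqrt{1-\abs{u'(r)}^2}}$ and the factor $\dfrac{r^{N-1}}{\sqrt{1-\abs{u'(r)}^2}}$ is strictly positive for $r\in(0,R]$ (here Lemma \ref{C1} gives $\abs{u'(r)}<1-\varepsilon<1$, so the square root is well defined and positive), the sign of $h_1(r)$ equals the sign of $u'(r)$. Hence $h_1(r)<0$ forces $u'(r)<0$ for all $r\in(0,R]$, which is the claim. The negative case is entirely symmetric: if $u(r)<0$ on $[0,R)$ then by \ref{it:b} and \ref{it:AR} the integrand $\lambda b(t)\abs{u(t)}^{q-2}u(t) + f(t,u(t))$ is a.e.\ strictly negative on $(0,R)$ (each term has the sign of $u(t)$), so $h_1(r)>0$ on $(0,R]$ and therefore $u'(r)>0$ there.

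The only delicate point is the claim that the integrand does not vanish on a set of positive measure — i.e.\ that strict positivity of $u$ on $(0,R)$ really does yield a strictly positive integral — and the verification that $f(t,u(t))\ge 0$ throughout. Both follow cleanly from \ref{it:AR} once we know $0<u(t)\le R$ on $(0,R)$, and the bound $u(t)\le R$ is exactly Lemma \ref{Kclose}. So there is no real obstacle here; the argument is just a sign analysis of the integrated equation. One should only be slightly careful at $r=R$: even though $u(R)=0$, the integral over the full interval $(0,R)$ is still strictly negative (resp.\ positive) because the integrand was already strictly signed on $(0,R)$, which is why the conclusion $u'(R)\ne 0$ holds as well.
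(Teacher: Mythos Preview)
Your proof is correct and follows essentially the same approach as the paper: both arguments use the sign of the right-hand side (from \ref{it:b} and \ref{it:AR}) together with $h_{1}(0)=0$ to conclude $h_{1}(r)<0$ on $(0,R]$, and then read off the sign of $u'$. The only cosmetic difference is that the paper first invokes Lemma~\ref{lem:weakclassical} to get $u\in C^{2}$ and works with the differential inequality $-h_{1}'>0$, whereas you work directly with the integrated form coming from Lemma~\ref{C1}; the content is the same.
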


\begin{proof}
	Let us remark that \ref{it:AR} implies that $f(r,s)s \geq 0$ for
	all $(r,s) \in [0,  R] \times [0,R] $. Let $u$ be a
	positive weak solution. We know from Lemma \ref{lem:weakclassical}
	that $u\in C^{2}[0,R]$ is a classical solution of \eqref{RMCE},
	hence for every $r \in [0,R]$, we have
	\begin{equation*}
		-\left(\frac{r^{N-1}u'}{\sqrt{1-\abs{u'}^{2}}}\right)' = r^{N-1}
		\lambda b(r)\abs{u}^{q-2}u + r^{N-1}f(r,u).
	\end{equation*}
	Since for all $r \in (0,R)$, $f(r,u(r)) \geq 0$ we deduce that
	\begin{equation*}
		-\left(\frac{r^{N-1}u'(r)}{\sqrt{1 - \abs{u'(r)}^{2}}}\right)' > 0
	\end{equation*}
	and hence we have, for all $0<r \leq R$, that
	\begin{equation*}
		\frac{r^{N-1}u'(r)}{\sqrt{1 - \abs{u'(r)}^{2}}} <
		\frac{r^{N-1}u'(r)}{\sqrt{1 - \abs{u'(r)}^{2}}}\Big|_{r=0} = 0
	\end{equation*}
	which means $u'(r)<0$ for any $r \in (0,R]$. The proof is similar
	if $u$ is a negative solution.
\end{proof}

We will now start building the functional whose critical points will
correspond to weak solutions of \eqref{RMCE}. 

First of all we observe that, since
\begin{equation}
	\label{inequality}
	\frac{1}{2} t \leq 1-\sqrt{1-t} \leq t, \text{ for all }
	t\in[0,1],
\end{equation}
we have that
\begin{equation*}
	\frac{1}{2} \int^{R}_{0} r^{N-1} \abs{u'}^{2} \, dr \leq
	\int^{R}_{0} r^{N-1} (1 - \sqrt{1 - \abs{u'}^{2}}) \, dr \leq
	\int^{R}_{0} r^{N-1} \abs{u'}^{2} \, dr
\end{equation*}
and we can define $\Psi \colon \mathcal{A} \to (-\infty,+\infty]$,
\begin{equation*}
	\Psi(u) = 
	\begin{cases}
		\int^{R}_{0} r^{N-1} (1 - \sqrt{1 - \abs{u'}^{2}}) \, dr
		&\text{if } u \in K, \\
		+\infty &\text{otherwise}
	\end{cases}.
\end{equation*}

\begin{lem}
	\label{wlsc}
	$\Psi$ is weakly lower semi-continuous on $\mathcal{A}$.
\end{lem}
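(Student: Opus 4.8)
The plan is to deduce weak lower semi-continuity from two facts: that $\Psi$ is convex, and that it is lower semi-continuous for the norm topology of $\mathcal{A}$. These together give the conclusion via the standard principle that a convex function with norm-closed epigraph has a weakly closed epigraph (Mazur's lemma: a norm-closed convex set is weakly closed), hence is weakly lower semi-continuous. So there are three steps: convexity, strong lower semi-continuity, and the convexity-based upgrade to the weak topology.

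First I would check convexity of $\Psi$ on $\mathcal{A}$. The scalar function $\phi(\xi) = 1 - \sqrt{1-\xi^{2}}$ is smooth on $(-1,1)$ with $\phi''(\xi) = (1-\xi^{2})^{-3/2} > 0$, hence convex on $[-1,1]$; consequently, for $u,v \in K$ and $t \in [0,1]$ one has $tu + (1-t)v \in K$ (convexity of $K$, Lemma \ref{Kclose}) and, pointwise in $r$, $\phi\bigl(tu'(r)+(1-t)v'(r)\bigr) \leq t\phi(u'(r)) + (1-t)\phi(v'(r))$; integrating against the non-negative weight $r^{N-1}$ gives $\Psi(tu+(1-t)v) \leq t\Psi(u)+(1-t)\Psi(v)$. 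Since $\Psi \equiv +\infty$ off the convex set $K$, the inequality holds for all $u,v \in \mathcal{A}$, so $\Psi$ is convex.

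Next I would prove that $\Psi$ is sequentially lower semi-continuous for the norm topology. Let $u_{n} \to u$ in $\mathcal{A}$; we may assume $\ell := \liminf_{n} \Psi(u_{n}) < +\infty$, for otherwise there is nothing to prove, and after passing to a subsequence we may assume $\Psi(u_{n}) \to \ell$, so that $u_{n} \in K$ for $n$ large. Because $K$ is closed in $\mathcal{A}$ (Lemma \ref{Kclose}), $u \in K$. Norm convergence means $r^{(N-1)/2} u_{n}' \to r^{(N-1)/2} u'$ in $L^{2}(0,R)$, so, passing to a further subsequence, $u_{n}'(r) \to u'(r)$ for a.e.\ $r \in (0,R)$. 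Since the integrand is non-negative, Fatou's lemma yields
\begin{equation*}
	\Psi(u) = \int_{0}^{R} r^{N-1}\bigl(1-\sqrt{1-\abs{u'}^{2}}\bigr)\,dr \leq \liminf_{n} \int_{0}^{R} r^{N-1}\bigl(1-\sqrt{1-\abs{u_{n}'}^{2}}\bigr)\,dr = \ell.
\end{equation*}

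Finally, the epigraph of $\Psi$ is convex (by Step 1) and closed in the norm topology of $\mathcal{A}\times\mathbb{R}$ (by Step 2), hence weakly closed, so $\Psi$ is weakly lower semi-continuous. The only real obstacle is that weak convergence in $\mathcal{A}$ does not force pointwise convergence of the derivatives, so one cannot apply Fatou directly along a weakly convergent sequence; convexity is precisely what bridges this gap, and the rest is routine.
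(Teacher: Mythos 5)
Your proof is correct, but it takes a genuinely different (more abstract) route than the paper's. The paper proves weak lower semi-continuity directly: starting from $u_{n} \rightharpoonup u$ in $\mathcal{A}$, it first upgrades to weak* convergence $u_{n}' \rightharpoonup u'$ in $L^{\infty}$, then uses the pointwise convexity of $\xi \mapsto 1-\sqrt{1-\xi^{2}}$ to write a subgradient inequality $\Psi(u_{n}) \geq \Psi(tu) + \int r^{N-1} \frac{tu'(u_{n}' - tu')}{\sqrt{1-|tu'|^{2}}}\,dr$ for $t \in (0,1)$, passes to the limit in $n$ using the weak* pairing (the weight $\frac{tu'}{\sqrt{1-|tu'|^{2}}}$ is in $L^{1}$ precisely because $t < 1$), and finally lets $t \to 1$ by dominated convergence. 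In effect the paper is carrying out the ``convexity upgrades strong lsc to weak lsc'' argument by hand in this concrete setting, with the parameter $t$ serving to keep the subgradient integrable. Your version factors the argument cleanly: convexity of the scalar integrand gives convexity of $\Psi$ (and hence of its epigraph), Fatou plus a.e.\ convergence of $u_{n}'$ along a subsequence gives strong lower semi-continuity (and hence a norm-closed epigraph), and Mazur's theorem that norm-closed convex sets are weakly closed finishes the job. Both proofs hinge on the same convexity fact; yours is shorter and more modular because it outsources the hard step to a standard functional-analytic principle, while the paper's is self-contained and exposes exactly where convexity interacts with the weak* convergence of the derivatives. One small point worth making explicit in your Step 2: norm convergence $u_{n} \to u$ in $\mathcal{A}$ gives $r^{(N-1)/2}u_{n}' \to r^{(N-1)/2}u'$ in $L^{2}(0,R)$, which after extracting a subsequence gives $u_{n}'(r) \to u'(r)$ for a.e.\ $r \in (0,R)$ since $r^{(N-1)/2} > 0$ there; you said this, and it is exactly right.
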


\begin{proof}
	In this proof we follow ideas from Bereanu, Jebelean, and Mawhin
	\cite{Bereanu_Jebelean_Mawhin_2014}.
	
	It is enough to show that $\Psi(u) \leq \liminf_{n\to+\infty}
	\Psi(u_{n})$ for sequences $\{u_{n}\} \subset K$ which weakly
	converge to some $u \in K$. By Ascoli-Arzel\`{a} Theorem we know
	that a subsequence $u_{n} \to u$ uniformly in $C[0,R]$. Since
	$\norml[\infty]{u'_{n}} \leq 1$, we know that a subsequence (which
	we still denote $u'_{n}$) converges in the weak*
	$\sigma(L^{\infty}, L^{1})$ topology to a function $v \in
	L^{\infty}$, that is
	\begin{equation*}
		\int_{0}^{R} u'_{n}(r) w(r) \, dr \to \int_{0}^{R} v(r) w(r)
		\, dr, \qquad \text{for all } w \in L^{1}(0,R).
	\end{equation*}
	
	Take any $\varphi \in C^{\infty}_{0}(0,R)$. We have that
	\begin{multline*}
		\int^{R}_{0} u(r) \varphi'(r) \, dr = \lim_{n \to +\infty}
		\int^{R}_{0} u_{n}(r) \varphi'(r) \, dr \\
		= -\lim_{n \to +\infty} \int^{R}_{0} u'_{n}(r) \varphi(r) \,
		dr = -\int^{R}_{0} v(r) \varphi(r) \, dr,
	\end{multline*}
	and hence $v(r) = u'(r)$.

	Since the function $t \mapsto 1 -\sqrt{1-t^{2}}$ is convex, we
	deduce that for all $t \in (0,1)$,
	\begin{multline}
		\label{convexity1}
		\int^{R}_{0} r^{N-1} (1 - \sqrt{1 - \abs{u_{n}'}^{2}} ) \, dr
		\\
		\geq \int^{R}_{0} r^{N-1} (1 - \sqrt{1 - \abs{t u'}^{2}}) \,
		dr + \int^{R}_{0} r^{N-1} \frac{t u'( u'_{n} - t u')}{\sqrt{ 1
		- \abs{t u'}^{2}}} \, dr.
	\end{multline}
	Since $\abs{tu'(r)} \leq t < 1$ for all $r \in [0,R]$ if $t \in
	(0,1)$ we have that
	\begin{equation*}
		 w_{t}(r) \equiv r^{N-1} \labs{\frac{t u'(r)}{\sqrt{ 1 - \abs{t
		 u'}^{2}(r)}}} \leq \frac{R^{N-1}t}{\sqrt{1-t^{2}}}
	\end{equation*}
	and $w_{t} \in L^{\infty} \subset L^{1}$ follows. Then
	\begin{equation*}
		\int^{R}_{0} r^{N-1} \frac{t u'(r)}{\sqrt{ 1 - \abs{t
		u'(r)}^{2}}} u'_{n}(r) \, dr \to \int^{R}_{0} r^{N-1} \frac{t
		\abs{u'(r)}^{2}}{\sqrt{ 1 - \abs{t u'(r)}^{2}}} \, dr
	\end{equation*}
	and from \eqref{convexity1} we deduce that for all $t \in (0,1)$
	\begin{multline*}
		\liminf_{n \to +\infty} \int^{R}_{0} r^{N-1} (1 - \sqrt{1 -
		\abs{u_{n}'}^{2}} ) \, dr \\
		\geq \int^{R}_{0} r^{N-1} (1 - \sqrt{1 - \abs{t u'}^{2}}) \,
		dr + \frac{1-t}{t} \int^{R}_{0} r^{N-1}
		\frac{\abs{tu'}^{2}}{\sqrt{ 1 - \abs{t u'}^{2}}} \, dr \\
		\geq \int^{R}_{0} r^{N-1} (1 - \sqrt{1 - \abs{t u'}^{2}}) \,
		dr.
	\end{multline*}
	Since $0 \leq (1 - \sqrt{1 - \abs{t u'}^{2}}) \leq 1$ for all $t
	\in (0,1)$, we can use Lebesgue dominated convergence to pass to
	the limit as $t \to 1$, showing that
	\begin{equation*}
		\liminf_{n \to +\infty} \int^{R}_{0} r^{N-1} (1 - \sqrt{1 -
		\abs{u_{n}'}^{2}} ) \, dr \geq \int^{R}_{0} r^{N-1} (1 -
		\sqrt{1- \abs{u'}^{2}}) \, dr,
	\end{equation*}
	and $\Psi$ is weakly lower semicontinuous.
\end{proof}

Since we are interested in positive and negative solutions, and since
functions in $K$ are bounded by $R$ we introduce the following
modifications of $f$:
\begin{equation*}
	\fp(r,s) =
	\begin{cases}
	   f(r,s) &\text{if } 0 \leq s \leq R \\
	   -f(r, R)(s-R-1) &\text{if } R < s < R+1 \\
	   0 &\text{if } s < 0 \text{ or } s \geq R+1
	\end{cases}
\end{equation*}
and
\begin{equation*}
	\fm(r,s) =
	\begin{cases}
	   f(r,s) &\text{if } -R \leq s \leq 0 \\
	   f(r, -R)(s+R+1) &\text{if } -R-1 < s < -R \\
	   0 &\text{if } s > 0 \text{ or } s \leq -R-1
	\end{cases}
\end{equation*}
If $f$ satisfies \ref{it:fC} and \ref{it:fat0}, the same is true for
$\fpm$, and we also have that $\fpm(r,s) s \geq 0$ for all $(r,s) \in
[0,R] \times \mathbb{R}$ so that all the results we have proved in
this section hold for $\fpm$. Since $\fpm(r, s)$ is continuous there
exists a positive constant $c = c(R)$, such that
\begin{equation}
	\label{fbounded}
	\abs{\fpm(r,s)} \leq c(R) \qquad \text{for all } r \in [0,R] \text{
	and } s \in \mathbb{R}.
\end{equation}

Corresponding to the modified nonlinear terms $\fpm$ we consider the
following modified problems
\begin{equation}
	\label{eRMCE+}
	\tag{$BP^{+}$}
	\begin{cases}
		- \left(\frac{r^{N-1} u'}{\sqrt{1 - \abs{u'}^{2}}} \right)' =
		\lambda r^{N-1} \abs{u}^{q-2} u^{+} + r^{N-1} \fp(r,u) & r \in
		(0,R)\\
		u'(0)=u(R)=0,
	\end{cases}
\end{equation}
where $u^{+}=\max \{0,u\}$ and 
\begin{equation}
	\label{eRMCE-}
	\tag{$BP^{-}$}
	\begin{cases}
		- \left(\frac{r^{N-1} u'}{\sqrt{1 - \abs{u'}^{2}}} \right)' =
		- \lambda r^{N-1} \abs{u}^{q-2} u^{-} + r^{N-1} \fm(r,u) & r \in
		(0,R)\\
		u'(0)=u(R)=0,
	\end{cases}
\end{equation}
where $u^{-}= \max\{0, - u\} = (-u)^{+}$.
\begin{lem}
	\label{positive}
	Assume that $u$ is a nontrivial weak solution of equation
	\eqref{eRMCE+}. Then $u$ is a nonnegative weak (and hence
	classical) solution of \eqref{RMCE}.
	
	Similarly, a nontrivial weak solution of equation \eqref{eRMCE-}
	is actually a nonpositive weak (and hence classical) solution of
	\eqref{RMCE}.
\end{lem}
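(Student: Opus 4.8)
The plan is to prove that a nontrivial weak solution $u$ of \eqref{eRMCE+} is automatically nonnegative; once $u \geq 0$ is known, all the truncations built into \eqref{eRMCE+} are inactive and $u$ satisfies \eqref{RMCE}. To show $u\geq 0$, I would test the weak formulation of \eqref{eRMCE+} against $\varphi=u^{-}:=\max\{0,-u\}$. This is admissible: since $u\in K$, the truncation $u^{-}$ is again $1$-Lipschitz — one has $(u^{-})'=-u'\,\chi_{\{u<0\}}$ a.e., so $\abs{(u^{-})'}\leq\abs{u'}\leq 1$ a.e. — and $u^{-}(R)=\max\{0,-u(R)\}=0$; hence $u^{-}\in K$. (This is precisely where one uses that $K$ is closed under truncation, not merely convex.)

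With $\varphi=u^{-}$ and the a.e.\ identity $u'(u^{-})'=-\abs{u'}^{2}\chi_{\{u<0\}}$, the left-hand side of the weak formulation is
\[
	\int_{0}^{R}\frac{r^{N-1}u'(u^{-})'}{\sqrt{1-\abs{u'}^{2}}}\,dr
	=-\int_{\{u<0\}}\frac{r^{N-1}\abs{u'}^{2}}{\sqrt{1-\abs{u'}^{2}}}\,dr\leq 0 .
\]
On the right-hand side every term vanishes pointwise: $\abs{u}^{q-2}u^{+}$ is zero on $\{u\leq 0\}$, so $\abs{u}^{q-2}u^{+}u^{-}\equiv 0$, and $\fp(r,u)\,u^{-}\equiv 0$ because $\fp(r,s)=0$ for $s<0$ while $u^{-}=0$ on $\{u\geq 0\}$. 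Hence the displayed integral equals $0$, which forces $u'=0$ a.e.\ on $\{u<0\}$, so $(u^{-})'=0$ a.e.\ on $(0,R)$. As $u^{-}$ is absolutely continuous it is then constant, and $u^{-}(R)=0$ gives $u^{-}\equiv 0$, i.e.\ $u\geq 0$ on $[0,R]$.

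Finally one identifies the equation. By Lemma \ref{Kclose}, $0\leq u(r)\leq R$ for all $r$, so $u^{+}=u$, $\abs{u}^{q-2}u^{+}=\abs{u}^{q-2}u$, and, directly from the definition of $\fp$, $\fp(r,u(r))=f(r,u(r))$; therefore $u$ solves the weak formulation \eqref{eq:weaksol}, so that $u$ is a nonnegative weak solution of \eqref{RMCE} and, by Lemma \ref{lem:weakclassical}, a classical one. The claim for \eqref{eRMCE-} follows by the symmetric argument — test against $\varphi=u^{+}=\max\{0,u\}\in K$, use $\fm(r,s)=0$ for $s>0$ to kill the right-hand side, obtain $u^{+}\equiv 0$ hence $u\leq 0$, and note that for $-R\leq u\leq 0$ one has $-\abs{u}^{q-2}u^{-}=\abs{u}^{q-2}u$ and $\fm(r,u)=f(r,u)$. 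I do not expect any serious difficulty here; the only points requiring a little care are the admissibility of the truncated test functions (handled above) and the a.e.\ product/chain rules for $u^{\pm}$ that are used throughout.
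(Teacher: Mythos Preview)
Your proof is correct and follows essentially the same approach as the paper: test the weak formulation against $u^{-}$, observe that the right-hand side vanishes while the left-hand side is nonpositive, conclude $u^{-}\equiv 0$, and then note that the truncations in $\fp$ are inactive on $[0,R]$. You include a few extra details the paper leaves implicit (admissibility of $u^{-}\in K$, the a.e.\ chain rule), but the argument is the same.
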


\begin{proof}
	Let us prove the first assertion. Taking $\varphi = u^{-}$ in the
	definition of weak solution \eqref{eq:weaksol} for \eqref{eRMCE+}
	by $u^{-}$ we get
	\begin{equation*}
		-\int^{R}_{0} \frac{r^{N-1}\abs{(u^{-})'}^{2}}{\sqrt{1 -
		\abs{u'}^{2}}} \, dr = \int^{R}_{0}r^{N-1} \left( \lambda
		\abs{u}^{q-2}u^{+} + \fp(r,u) \right) u^{-}\, dr = 0,
	\end{equation*}
	which implies $u^{-} = 0$ and thus $u = u^{+}$ is a non-negative
	weak solution of \eqref{eRMCE+}. Moreover since $u \in K$ we have
	that $0 \leq u(r) \leq R$. It follows that $u^{+}(r) = u(r)$ and
	$\fp(r,u(r)) = f(r,u(r))$ for all $r \in [0, R]$ and the Lemma
	follows.
\end{proof}

We let $\Fpm \colon \mathcal{A} \to \mathbb{R}$,
\begin{equation*}
	\Fpm(u) = - \frac{\lambda}{q} \int^{R}_{0} r^{N-1} b(r)
	\abs{u^{\pm}}^{q} \, dr - \int^{R}_{0} r^{N-1} \ffpm(r, u)\, dr
\end{equation*}
where
\begin{equation*}
	\ffpm(r,s) = \int_{0}^{s} \fpm(r,t) \, dt.
\end{equation*}
Since $\fpm$ are bounded and continuous, it is a standard fact that
$\Fpm \colon \mathcal{A} \to \mathbb{R}$ are $C^{1}$
maps and that for all $u$, $v \in \mathcal{A}$ we have
\begin{equation*}
	\scalarh{(\Fpm)'(u)}{v} = - \lambda \int^{R}_{0} r^{N-1} b(r)
	\abs{u^{\pm}}^{q-2} u^{\pm} v \, dr - \int^{R}_{0} r^{N-1}
	\fpm(r,u) v \, dr.
\end{equation*}

The following property is an immediate consequence of the compactness
of the embedding of $\mathcal{A}$ in $L^{p}_{N-1}$:
\begin{lem}
	\label{Fcompact}
	The operator $(\Fpm)'$ is completely continuous
	on $\mathcal{A}$, i.e. if $u_{n} \rightharpoonup u$, then
	$(\Fpm)'(u_{n}) \to (\Fpm)'(u).$
\end{lem}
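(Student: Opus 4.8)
The plan is to show that $(\Fpm)'$ maps weakly convergent sequences to strongly convergent ones by bounding $\normh{(\Fpm)'(u_{n}) - (\Fpm)'(u)}$ via Hölder's inequality, and then reducing everything to strong convergence in $L^{2}_{N-1}$ together with the continuity of two Nemytskii operators. So let $u_{n} \rightharpoonup u$ in $\mathcal{A}$. First I would note that $(u_{n})$ is bounded in $\mathcal{A}$; since the embedding $\mathcal{A} \hookrightarrow L^{2}_{N-1}$ is compact (Lemma~\ref{estimate} with $p=2$), every subsequence of $(u_{n})$ has a further subsequence converging in $L^{2}_{N-1}$, and the weak convergence in $\mathcal{A}$ — hence in $L^{2}_{N-1}$ — identifies the limit as $u$. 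Therefore the whole sequence satisfies $u_{n} \to u$ in $L^{2}_{N-1}$.

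Next I would pass this convergence through the two nonlinearities in $(\Fpm)'$. The map $s \mapsto \abs{s^{\pm}}^{q-2}s^{\pm}$ satisfies $\abs{\,\abs{s^{\pm}}^{q-2}s^{\pm}\,} \leq \abs{s}^{q-1}$ with exponent $q-1 \in (0,1)$, so by the standard continuity theorem for Nemytskii operators it is continuous from $L^{2}_{N-1}$ into $L^{2/(q-1)}_{N-1}$; as $(0,R)$ has finite measure for $r^{N-1}\,dr$ and $2/(q-1) > 2$, there is a continuous inclusion $L^{2/(q-1)}_{N-1} \hookrightarrow L^{2}_{N-1}$, whence $\abs{u_{n}^{\pm}}^{q-2}u_{n}^{\pm} \to \abs{u^{\pm}}^{q-2}u^{\pm}$ in $L^{2}_{N-1}$. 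Likewise, $\fpm$ is continuous and bounded by $c(R)$ (see~\eqref{fbounded}), so $w \mapsto \fpm(\cdot, w)$ is continuous from $L^{2}_{N-1}$ to $L^{2}_{N-1}$, giving $\fpm(\cdot, u_{n}) \to \fpm(\cdot, u)$ in $L^{2}_{N-1}$.

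Finally I would conclude by duality. For $v \in \mathcal{A}$ with $\normh{v} \leq 1$, Hölder's inequality in $L^{2}_{N-1}$ together with the Poincaré inequality (Lemma~\ref{estimate3} with $p=2$), which gives $\norml[N-1,2]{v} \leq \sqrt{C_{2}}$, and the bound $0 < b(r) \leq b_{1}$ from \ref{it:b}, yield
\begin{equation*}
	\abs{\scalarh{(\Fpm)'(u_{n}) - (\Fpm)'(u)}{v}} \leq \sqrt{C_{2}}\Bigl( \lambda b_{1} \norml[N-1,2]{\abs{u_{n}^{\pm}}^{q-2}u_{n}^{\pm} - \abs{u^{\pm}}^{q-2}u^{\pm}} + \norml[N-1,2]{\fpm(\cdot, u_{n}) - \fpm(\cdot, u)} \Bigr).
\end{equation*}
Taking the supremum over such $v$ bounds $\normh{(\Fpm)'(u_{n}) - (\Fpm)'(u)}$ (the norm understood in $\mathcal{A}^{*} \cong \mathcal{A}$) by the right-hand side, which tends to $0$ by the previous paragraph. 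The only point requiring a little care is the sublinear term $\abs{u}^{q-2}u$ with $q \in (1,2)$: one must choose the target exponent $2/(q-1)$ and invoke the finite-measure embedding $L^{2/(q-1)}_{N-1} \hookrightarrow L^{2}_{N-1}$ to return to $L^{2}_{N-1}$. Apart from that, this is the standard argument that a lower-order (gradient-type) term defines a compact operator.
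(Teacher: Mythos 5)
Your argument is correct, and it is exactly the standard argument the paper alludes to but does not spell out (the paper simply remarks that the lemma is an immediate consequence of the compact embedding $\mathcal{A}\hookrightarrow L^{p}_{N-1}$). You correctly reduce to strong $L^{2}_{N-1}$ convergence via compactness, handle the sublinear term $\abs{s}^{q-2}s$ through Nemytskii continuity into $L^{2/(q-1)}_{N-1}$ followed by the finite-measure embedding back into $L^{2}_{N-1}$, treat the bounded term $\fpm$ similarly, and close with Cauchy--Schwarz and the Poincar\'e inequality in the dual estimate.
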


Then the Euler-Lagrange functional $\Ipm \colon \mathcal{A} \to
(-\infty, +\infty]$ can be defined as
\begin{equation*}
	I^{\pm}_{\lambda}(u) = \Psi(u) + \Fpm(u).
\end{equation*}

Now we want to explore properties of functionals $I^{\pm}_{\lambda}$.
The proof of the following two Lemmas are similar to those in
\cite[Proposition 2.7 and proof of Theorem
1.4]{Bonheure_dAvenia_Pomponio_2016}.

\begin{lem}
	\label{lem:criticalinL1}
	Assume $u\in \mathcal{A}$ is a critical point of
	$I^{\pm}_{\lambda}$. Then $u$ satisfies
	\begin{equation}
		\label{eq:stimau2cr}
		\int^{R}_{0} \frac{r^{N-1} \abs{u'}^{2}}{\sqrt{1 -
		\abs{u'}^{2}}} \, dr < +\infty
	\end{equation}
	and 
	\begin{equation}
		\label{eq:stimau1cr}
		\int^{R}_{0} \frac{r^{N-1}\abs{u'}}{\sqrt{1-\abs{u'}^{2}}} \,
		dr < +\infty.
	\end{equation}
	In particular we have that
	\begin{equation*}
		E = \settc{r \in (0,R)}{\abs{u'(r)} = 1}
	\end{equation*}
	is a null set with respect to Lebesgue measure.
\end{lem}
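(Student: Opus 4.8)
The plan is to follow the scheme of \cite[Proposition 2.7]{Bonheure_dAvenia_Pomponio_2016}: one cannot invoke Lemma \ref{C1} directly, since a critical point of $\Ipm = \Psi + \Fpm$ only satisfies the one-sided inequality \eqref{def:criticalpoint} (the convex part $\Psi$ fails to be differentiable on $E$), so the idea is to feed into \eqref{def:criticalpoint} a one-parameter family of admissible competitors obtained by shrinking $u$ towards $0$, and then let the parameter tend to $0$. First note that since $\Psi(u) < +\infty$ we have $u \in D(\Psi) = K$, hence $\norml[\infty]{u'} \le 1$; and for every $t \in (0,1)$ the function $v = (1-t)u$ again lies in $K$, because $((1-t)u)' = (1-t)u'$ has sup norm $\le 1-t < 1$ and $v(R) = 0$. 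Moreover, since $\Fpm \in C^{1}(\mathcal{A},\mathbb{R})$, the number $\scalarh{(\Fpm)'(u)}{u} = -\lambda\int_0^R r^{N-1}b(r)\abs{u^{\pm}}^{q}\,dr - \int_0^R r^{N-1}\fpm(r,u)u\,dr$ is a well-defined finite real (both $u$ and $\fpm$ are bounded on $[0,R]$).

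Testing \eqref{def:criticalpoint} with $v = (1-t)u$, using $v-u = -tu$ and
\[
\Psi\bigl((1-t)u\bigr) - \Psi(u) = \int_0^R r^{N-1}\Bigl(\sqrt{1-\abs{u'}^{2}} - \sqrt{1-(1-t)^{2}\abs{u'}^{2}}\Bigr)\,dr,
\]
one gets, after dividing by $t > 0$,
\[
\int_0^R r^{N-1}\,\frac{\sqrt{1-(1-t)^{2}\abs{u'}^{2}}-\sqrt{1-\abs{u'}^{2}}}{t}\,dr \;\le\; -\,\scalarh{(\Fpm)'(u)}{u}.
\]
The integrand is nonnegative since $(1-t)^{2} < 1$; moreover, writing $s = 1-t$, the map $s \mapsto \sqrt{1-s^{2}a}$ is concave on $[0,1]$ for every $a \in [0,1]$, so the difference quotient above is monotone in $t$ and increases, as $t \to 0^{+}$, pointwise to $r^{N-1}\,\abs{u'}^{2}/\sqrt{1-\abs{u'}^{2}}$ on $\{\,\abs{u'}<1\,\}$ and to $+\infty$ on $E$. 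Hence, by the monotone convergence theorem (applied along a sequence $t_n \downarrow 0$),
\[
\int_0^R r^{N-1}\,\frac{\abs{u'}^{2}}{\sqrt{1-\abs{u'}^{2}}}\,dr \;\le\; -\,\scalarh{(\Fpm)'(u)}{u} \;<\; +\infty,
\]
which is \eqref{eq:stimau2cr}; since $r^{N-1}>0$ on $(0,R)$, finiteness of this integral forces $\abs{u'}<1$ a.e., i.e. $\abs{E}=0$.

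Finally \eqref{eq:stimau1cr} follows exactly as in the proof of Lemma \ref{C1}, splitting $(0,R)$ into the part where $\abs{u'}\le 1/2$, on which $\abs{u'}/\sqrt{1-\abs{u'}^{2}}\le 1/\sqrt{3}$, and the part where $\abs{u'}>1/2$, on which $\abs{u'}/\sqrt{1-\abs{u'}^{2}}\le 2\abs{u'}^{2}/\sqrt{1-\abs{u'}^{2}}$, so that
\[
\int_0^R \frac{r^{N-1}\abs{u'}}{\sqrt{1-\abs{u'}^{2}}}\,dr \le \frac{1}{\sqrt{3}}\int_0^R r^{N-1}\,dr + 2\int_0^R \frac{r^{N-1}\abs{u'}^{2}}{\sqrt{1-\abs{u'}^{2}}}\,dr < +\infty.
\]
The only delicate point is the passage to the limit $t\to 0^{+}$ inside the integral: one must check the sign and the monotonicity (equivalently the concavity computation above) of the difference quotient to legitimately invoke monotone convergence; Fatou's lemma along any sequence $t_n \downarrow 0$ would work just as well and sidesteps the monotonicity verification. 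Everything else is bookkeeping, using only that $(\Fpm)'(u)\in\mathcal{A}^{*}$ pairs with $u\in\mathcal{A}$ to a finite number.
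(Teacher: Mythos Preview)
Your proof is correct and follows essentially the same route as the paper: test the critical-point inequality with $v=(1-t)u\in K$, divide by $t$, and pass to the limit $t\to 0^{+}$ to obtain \eqref{eq:stimau2cr}, then deduce \eqref{eq:stimau1cr} by the same splitting as in Lemma~\ref{C1}. The only cosmetic difference is that the paper invokes Fatou's lemma directly, whereas you first verify monotonicity of the difference quotient via concavity to apply monotone convergence (and then note Fatou would suffice anyway); both are equally valid here.
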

\begin{proof}
	Let us assume $u$ is a critical point of $I^{+}_{\lambda}$. The
	other case is similar. Since $u$ is a critical point of
	$I^{+}_{\lambda}$, by Definition \ref{def:d1}, we have that $u \in
	K$ and for any $v \in K$
	\begin{multline*}
		\int^{R}_{0} r^{N-1}(-\sqrt{1 - \abs{v'}^{2}} + \sqrt{1 -
		\abs{u'}^{2}}) \, dr \\
		\geq \int^{R}_{0} r^{N-1} \left( \lambda b(r)\abs{u^{+}}^{q-2}
		u^{+} + \fp(r,u) \right)(v-u) \, dr.
	\end{multline*}
	Let $v_{t} = (1-t)u$. Then $v_{t} \in K$ for all $t\in(0,1)$ and then
	\begin{multline*}
		\int^{R}_{0} r^{N-1} \frac{\left(\sqrt{1 - (1 - t)^{2}
		\abs{u'}^{2}} - \sqrt{1 - \abs{u'}^{2}} \right)}{t} \\
		\leq \int^{R}_{0} r^{N-1} \left(\lambda b(r)\abs{u^{+}}^{q} +
		\fp(r,u)u \right) \, dr.
	\end{multline*}
	We deduce from this that
	\begin{multline*}
		\int^{R}_{0} \frac{r^{N-1} (\abs{u'}^{2} - \abs{(1 -
		t)u'}^{2})}{t\left(\sqrt{1-\abs{(1-t)u'}^{2}} + \sqrt{1 -
		\abs{u'}^{2}}\right)} \\
		\leq \int^{R}_{0}r^{N-1} \left( \lambda b(r)\abs{u^{+}}^{q} +
		\fp(r,u) u \right)\, dr.
	\end{multline*}

	Since $u \in K \subset L^{\infty}$ and $f$ is a continuous
	function we have that
	\begin{equation*}
		\int^{R}_{0}r^{N-1} \left( \lambda b(r)\abs{u^{+}}^{q} +
		\fp(r,u) u \right)\, dr < +\infty
	\end{equation*} 
	and by Fatou's Lemma, letting $t \to 0^{+}$, we obtain
	\eqref{eq:stimau2cr}.

	Proceeding now as in the proof of \eqref{eq:stimau1} in Lemma
	\ref{C1} we deduce from \eqref{eq:stimau2cr} that
	\eqref{eq:stimau1cr} holds.
\end{proof}

\begin{lem}
	\label{lem:criticalAREsolutions}
	If $u$ is a critical point of $I^{+}_{\lambda}$, then $u$ is a
	weak solution of \eqref{eRMCE+}, and hence a nonnegative classical
	solution of \eqref{RMCE}.
	
	If $u$ is a critical point of $I^{-}_{\lambda}$, then $u$ is a
	weak solution of \eqref{eRMCE-}, and hence a nonpositive classical
	solution of \eqref{RMCE}.

\end{lem}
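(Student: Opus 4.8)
The plan is to show that a critical point $u$ of $I^{+}_{\lambda}$ satisfies the weak formulation \eqref{eq:weaksol} for the modified problem \eqref{eRMCE+}, i.e.\ that for every $\varphi \in K$
\begin{equation*}
	\int^{R}_{0} \frac{r^{N-1} u' \varphi'}{\sqrt{1 - \abs{u'}^{2}}} \, dr = \int^{R}_{0} r^{N-1} \bigl(\lambda b(r)\abs{u}^{q-2}u^{+} + \fp(r,u) \bigr) \varphi \, dr,
\end{equation*}
after which Lemma~\ref{positive} (which converts a solution of \eqref{eRMCE+} into a nonnegative solution of \eqref{RMCE}) together with Lemmas~\ref{C1} and \ref{lem:weakclassical} (regularity and classical-solution status) finishes the proof. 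The case of $I^{-}_{\lambda}$ is identical with the obvious sign changes.

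The first step is to unfold the definition of critical point. By Definition~\ref{def:d1}, $u \in K$ and for all $v \in K$
\begin{equation*}
	\scalarh{(\Fp)'(u)}{v-u} + \Psi(v) - \Psi(u) \geq 0.
\end{equation*}
I would test this with $v = u + t w$ where $w \in W^{1,\infty} \cap \mathcal{A}$ and $t > 0$ is small; this is legitimate because the truncations defining $K$ keep $\norml[\infty]{(u+tw)'} \leq 1$ only after a rescaling, so a cleaner route is to set $v = (1-t)u + t\varphi$ for $\varphi \in K$, which lies in $K$ by convexity. With this choice the $\mathcal{F}^{+}$-term is handled directly since $\Fp \in C^{1}$, and the convex term gives
\begin{equation*}
	\frac{\Psi((1-t)u + t\varphi) - \Psi(u)}{t} \to \int^{R}_{0} \frac{r^{N-1} u' (\varphi' - u')}{\sqrt{1 - \abs{u'}^{2}}} \, dr
\end{equation*}
as $t \to 0^{+}$, provided the integral makes sense — and this is exactly where Lemma~\ref{lem:criticalinL1} is needed, since it guarantees $\int r^{N-1}\abs{u'}/\sqrt{1-\abs{u'}^{2}}\,dr < +\infty$ and that $\abs{u'} = 1$ only on a null set, so the integrand is finite a.e.\ and dominated appropriately. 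One must justify passing the limit inside the integral: since $t \mapsto 1 - \sqrt{1 - \abs{((1-t)u + t\varphi)'}^{2}}$ is convex in $t$, the difference quotients are monotone, so monotone convergence (or convexity of $\Psi$ along the segment) applies and yields the directional-derivative inequality
\begin{equation*}
	\int^{R}_{0} \frac{r^{N-1} u' (\varphi' - u')}{\sqrt{1 - \abs{u'}^{2}}} \, dr \geq \scalarh{-(\Fp)'(u)}{\varphi - u} \qquad \forall \varphi \in K.
\end{equation*}

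The second step is to upgrade this inequality to the equality \eqref{eq:weaksol}. Writing out $(\Fp)'(u)$, the displayed inequality says precisely that, for all $\varphi \in K$,
\begin{equation*}
	\int^{R}_{0} \frac{r^{N-1} u' (\varphi' - u')}{\sqrt{1 - \abs{u'}^{2}}} \, dr \geq \int^{R}_{0} r^{N-1}\bigl(\lambda b(r)\abs{u^{+}}^{q-2}u^{+} + \fp(r,u)\bigr)(\varphi - u)\, dr.
\end{equation*}
Now I would use the fact that $K$ is symmetric about $u$ in the relevant directions: for a fixed $\psi \in W^{1,\infty} \cap \mathcal{A}$ one can take $\varphi = u \pm \epsilon\psi$ for $\epsilon$ small enough that $\norml[\infty]{(u\pm\epsilon\psi)'} \leq 1$ — this is possible because, by Lemma~\ref{C1} applied through Lemma~\ref{lem:criticalinL1}'s consequences, $\abs{u'} < 1$ away from a null set, but more carefully one should truncate $\psi$ or use that bounded perturbations of $u$ with small $L^\infty$ norm of the derivative remain in $K$ only when $\norml[\infty]{u'}<1$ strictly; the honest fix is to first derive $\norml[\infty]{u'} < 1$ strictly, exactly as in Lemma~\ref{C1}, from the boundedness \eqref{fbounded} of $\fpm$ and an integration argument. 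Once $\norml[\infty]{u'}<1$ strictly is known, both signs $\varphi = u \pm \epsilon\psi$ are admissible, the two inequalities combine to give equality for all such $\psi$, and by the density remark after Definition~\ref{weak} this equality for all $\psi \in W^{1,\infty}\cap\mathcal{A}$ is equivalent to \eqref{eq:weaksol} for all $\varphi \in K$. Hence $u$ is a weak solution of \eqref{eRMCE+}.

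The third step is purely bookkeeping: by Lemma~\ref{positive}, $u \geq 0$ and $\fp(r,u) = f(r,u)$, $u^{+} = u$, so $u$ is in fact a weak solution of \eqref{RMCE}; Lemmas~\ref{C1} and \ref{lem:weakclassical} then give $u \in C^{2}[0,R]$ and classical. The main obstacle is the second step — specifically, establishing that a critical point automatically satisfies $\norml[\infty]{u'}<1$ strictly (not merely $\leq 1$), which is what makes two-sided test functions available and turns the variational inequality into the equation. This is where one genuinely reruns the argument of Lemma~\ref{C1}: from the one-sided inequality one extracts that $h_1(r) = r^{N-1}u'/\sqrt{1-\abs{u'}^2}$ is (the primitive of) an $L^\infty$ function, hence continuous with $h_1(0)=0$, whence $\abs{u'(r)} \leq Cr/N$ near $0$ and $\abs{u'}$ stays bounded away from $1$ on all of $[0,R]$. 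Everything else is routine.
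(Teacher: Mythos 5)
Your Step~1 (deriving the one-sided directional-derivative inequality via convexity and monotone convergence, using Lemma~\ref{lem:criticalinL1} to make the limiting integral finite) is sound and close to what the paper does. The gap is in Step~2. You propose to establish $\norml[\infty]{u'} < 1$ strictly \emph{before} knowing the weak equation, by ``rerunning Lemma~\ref{C1}.'' But Lemma~\ref{C1} starts from the hypothesis that $u$ is a weak solution: it is the weak formulation \eqref{eq:weaksol} that forces $h_1(r) = r^{N-1}u'/\sqrt{1-\abs{u'}^2}$ into $W^{1,\infty}(0,R)$, hence continuity of $h_1$, hence the strict bound on $\abs{u'}$. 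From the one-sided variational inequality and Lemma~\ref{lem:criticalinL1} alone you only know that $h_1 \in L^1_{N-1}$ and that $\settc{r}{\abs{u'(r)} = 1}$ is a Lebesgue null set; neither of these gives $\norml[\infty]{u'} < 1$. So the chain ``one-sided inequality $\Rightarrow$ $h_1$ continuous $\Rightarrow$ $\norml[\infty]{u'} < 1$ $\Rightarrow$ two-sided testing $\Rightarrow$ equation'' is circular at its first link.

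The paper's proof sidesteps the need for $\norml[\infty]{u'}<1$ entirely by truncating the test function rather than requiring strict room around $u'$. For $\varphi \in K$ it sets $E_k = \settc{r}{\abs{u'(r)} \geq 1 - 1/k}$ and $\varphi_k(r) = -\int_r^R \varphi'(s)[1 - \chi_{E_k}(s)]\,ds$, so $\varphi_k'$ vanishes exactly where $\abs{u'}$ is close to $1$. The crucial observation is that $u + t\varphi_k \in K$ for \emph{every} $\abs{t} \leq 1/k$, positive or negative: on $E_k$ the derivative of $u + t\varphi_k$ equals that of $u$, and off $E_k$ one has $\abs{u'} < 1 - 1/k$, leaving room for both signs of $t$. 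Two-sided testing therefore yields equality for each fixed $k$, with no a priori strict bound on $u'$. The integrability provided by Lemma~\ref{lem:criticalinL1} is then used, not to bound $u'$ away from $1$, but to pass to the limit $k \to \infty$ by dominated convergence (since $\chi_{E_k}\to 0$ a.e.). That truncation device is the idea missing from your plan, and it is precisely the step that converts the variational inequality into the weak equation.
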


\begin{proof}
	For $k \in \mathbb{N}\backslash \{0\}$, we define
	\begin{equation}\label{Ek}
		E_{k} = \settc{r\in(0,R)}{\abs{u'(r)} \geq 1-\frac{1}{k}}.
	\end{equation}
	According to Lemma \ref{lem:criticalinL1},
	\begin{equation}\label{E}
		E = \settc{r\in(0,R)}{\abs{u'(r)}= 1}
	\end{equation}
	is a null set with respect to Lebesgue measure. Since $E_{k} 
	\supset E_{k+1}$ and $E = \bigcap_{k} E_{k}$ we have that
	$\lim_{k \to +\infty} \abs{E_{k}} = \abs{E} = 0$.

	Take $\varphi\in K$ and let
	\begin{equation*}
		\varphi_{k}(r)=-\int^{R}_{r}\varphi'(s)[1-\chi_{E_{k}}(s)] \, ds,
	\end{equation*}
	where
	\begin{equation*}
		\chi_{E_{k}}(r)=
		\begin{cases}
			1 &\text{if } r\in E_{k}, \\
			0 &\text{if } r \notin E_{k}\\
		\end{cases}.
	\end{equation*}
	For all $\abs{t} \leq \frac{1}{k}$ we have $u + t\varphi_{k}\in
	K$. Indeed, since $(u + t\varphi_{k})'(r) = u'(r) +
	t\varphi'(r)[1-\chi_{E_{k}}(r)]$, we have, if $r \in E_{k}$,
	\begin{equation*}
		\abs{(u+t\varphi_{k})'(r)} = \abs{u'(r)}\leq 1
	\end{equation*}
	while if $r\notin E_{k}$, letting $\abs{t} \leq \frac{1}{k}$, we
	have
	\begin{equation*}
		\abs{(u+t\varphi_{k})'(r)} \leq \abs{u'(r)} + \abs{t}
		\abs{\varphi'(r)} < 1 - \frac{1}{k} + \frac{1}{k} = 1.
	\end{equation*}
	
	We have that for all $0 < \abs{t} \leq \frac{1}{k}$ 
	\begin{align*}
		\frac{\Psi(u+t\varphi_{k}) - \Psi(u)}{t} &= \int^{R}_{0}
		r^{N-1} \frac{\sqrt{1-\abs{u'}^{2}} - \sqrt{1-\abs{(u +
		t\varphi_{k})'}^{2}}}{t}\, dr \\
		&= \int^{R}_{0} r^{N-1} \frac{2u'(r)\varphi'_{k}(r) +
		t\varphi'_{k}(r)^{2}}{\sqrt{1-\abs{u'}^{2}} + \sqrt{1-\abs{(u +
		t\varphi_{k})'}^{2}}}\, dr \\
		&= \int_{(0,R) \setminus E_{k}} r^{N-1} \frac{2u'(r)\varphi'_{k}(r) +
		t\varphi'_{k}(r)^{2}}{\sqrt{1-\abs{u'}^{2}} + \sqrt{1-\abs{(u +
		t\varphi_{k})'}^{2}}}\, dr
	\end{align*}
	Since $\abs{u'(r)} < 1 - \frac{1}{k}$ in $(0,R) \setminus E_{k}$ 
	we can pass to the limit as $t \to 0$ to deduce that for all $k 
	\in \mathbb{N}$
	\begin{equation*}
		\lim_{t \to 0} \frac{\Psi(u+t\varphi_{k}) - \Psi(u)}{t} =
		\int_{(0,R)} r^{N-1}
		\frac{u'(r)\varphi'_{k}(r)}{\sqrt{1-\abs{u'}^{2}}}\, dr.
	\end{equation*}
	From the fact that $u$ is a critical point we deduce that
	\begin{equation*}
		\scalarh{(\Fpm)'(u)}{t\varphi_{k}} + \Psi(u+t\varphi_{k})
		- \Psi(u) \geq 0
	\end{equation*}
	and passing to the limit as $t \to 0\pm$ we obtain that for all 
	$k \in \mathbb{N}$
	\begin{align*}
		0 = \lim_{t \to 0}
		&\left(\scalarh{(\Fpm)'(u)}{\varphi_{k}} +
		\frac{\Psi(u+t\varphi_{k}) - \Psi(u)}{t} \right)\\
		&= \int^{R}_{0}r^{N-1} \left(\frac{u'\varphi_{k}'}{\sqrt{1 -
		\abs{u'}^{2}}} -\lambda b(r) \abs{u^{\pm}}^{q-2} u^{\pm}
		\varphi_{k} - \fpm(r, u)\varphi_{k}\right) \, dr
	\end{align*}
	Since $E_{k+1} \subset E_{k}$ and $\abs{E_{k}}\to0$ as $k \to
	+\infty$, then $\chi_{E_{k}}\to 0$ a.e. in $(0,R)$. Since
	\eqref{eq:stimau1cr} holds we deduce from Lebesgue's Dominated
	Convergence Theorem that
	\begin{equation*}
		\int^{R}_{0} r^{N-1} \frac{u'\varphi'}{\sqrt{1 -
		\abs{u'}^{2}}}[1 - \chi_{E_{k}}]\, dr \to \int^{R}_{0}r^{N-1}
		\frac{u'\varphi'}{\sqrt{1-\abs{u'}^{2}}}\, dr.
	\end{equation*}

	Since also
	\begin{multline*}
		\int^{R}_{0}r^{N-1} \left(\lambda b(r)\abs{u^{\pm}}^{q-2}
		u^{\pm} + \fpm(r,u) \right) \varphi_{k} \, dr \\
		\to \int^{R}_{0}r^{N-1}
		\left(\lambda b(r) \abs{u^{\pm}}^{q-2} u^{\pm} + \fpm(r,u)
		\right) \varphi \, dr,
	\end{multline*}
	we deduce that any $\varphi \in K$ holds that
	\begin{equation}
		\label{weaks}
		\int^{R}_{0} \frac{r^{N-1}u' \varphi'}{\sqrt{1 -
		\abs{u'}^{2}}} \, dr = \int^{R}_{0}r^{N-1} \left(\lambda
		b(r)\abs{u^{\pm}}^{q-2}u^{\pm} + \fpm(r,u) \right) \varphi \,
		dr.
	\end{equation}
	i.e.~$u$ is a weak solution of \eqref{eRMCE+} or \eqref{eRMCE-}.
\end{proof}

We finish this section proving that the functional satisfies the 
Palais-Smale compactness condition.
\begin{lem}
	\label{lem:PS}
	Every Palais-Smale sequence $\{u_n\}$ of $I^{\pm}_{\lambda}$ has a
	convergent subsequence in $\mathcal{A}$.
\end{lem}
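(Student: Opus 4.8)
The plan is to exploit the fact that $\Psi$ is not merely convex but \emph{strongly} convex: the density $j(t):=1-\sqrt{1-t^{2}}$ has second derivative $j''(t)=(1-t^{2})^{-3/2}\geq 1$ on $(-1,1)$. Combined with the complete continuity of $(\Fpm)'$ from Lemma \ref{Fcompact}, this forces a Palais--Smale sequence to converge strongly. First I would observe that if $\{u_{n}\}$ is a Palais--Smale sequence then $\Ipm(u_{n})\to c\in\mathbb{R}$, so $\Psi(u_{n})<+\infty$ and hence $u_{n}\in K$ for every $n$. By Lemma \ref{Kclose}, $K$ is bounded in $\mathcal{A}$, so $\{u_{n}\}$ is bounded and, along a subsequence, $u_{n}\rightharpoonup u$ weakly in $\mathcal{A}$; since $K$ is convex and closed it is weakly closed, whence $u\in K$. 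Passing to a further subsequence one may also assume $u_{n}'\rightharpoonup u'$ in $\sigma(L^{\infty},L^{1})$ (the limit is identified with $u'$ by testing against $C_{0}^{\infty}(0,R)$, exactly as in the proof of Lemma \ref{wlsc}).

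The next step is to show that $\Psi(u_{n})\to\Psi(u)$. Using the Palais--Smale inequality \eqref{eq:ps} with the admissible choice $v=u\in D(\Psi)=K$ gives
\begin{equation*}
	\scalarh{(\Fpm)'(u_{n})}{u-u_{n}}+\Psi(u)-\Psi(u_{n})\geq-\varepsilon_{n}\normh{u-u_{n}}.
\end{equation*}
The right-hand side tends to $0$ because $\normh{u-u_{n}}$ is bounded and $\varepsilon_{n}\to0$; moreover $\scalarh{(\Fpm)'(u_{n})}{u-u_{n}}\to0$ since $(\Fpm)'(u_{n})\to(\Fpm)'(u)$ strongly in $\mathcal{A}^{*}$ by Lemma \ref{Fcompact} while $u-u_{n}\rightharpoonup0$. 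Hence $\limsup_{n}\Psi(u_{n})\leq\Psi(u)$, and combined with the weak lower semicontinuity of $\Psi$ (Lemma \ref{wlsc}) this yields $\Psi(u_{n})\to\Psi(u)$.

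It remains to upgrade this to strong convergence. For $\abs{s}\leq1$ and $\abs{\tau}<1$, integrating $j''\geq1$ and then letting $s\to\pm1$ by continuity gives $j(s)\geq j(\tau)+j'(\tau)(s-\tau)+\tfrac12(s-\tau)^{2}$. Fix $t_{0}\in(0,1)$ and apply this pointwise with $s=u_{n}'(r)$ and $\tau=t_{0}u'(r)$ (so that $\abs{\tau}\leq t_{0}<1$ and the slope $j'(t_{0}u')$ is bounded), multiply by $r^{N-1}$ and integrate, to get
\begin{equation*}
	\Psi(u_{n})\geq\Psi(t_{0}u)+\int_{0}^{R}r^{N-1}\,j'(t_{0}u')(u_{n}'-t_{0}u')\,dr+\tfrac12\int_{0}^{R}r^{N-1}\abs{u_{n}'-t_{0}u'}^{2}\,dr.
\end{equation*}
Now let $n\to\infty$: the left side tends to $\Psi(u)$; the middle integral tends to $(1-t_{0})\int_{0}^{R}r^{N-1}j'(t_{0}u')u'\,dr$ because $r^{N-1}j'(t_{0}u')\in L^{\infty}\subset L^{1}$ and $u_{n}'\rightharpoonup u'$ weak-$*$; and expanding $\abs{u_{n}'-t_{0}u'}^{2}=\abs{u_{n}'-u'}^{2}+2(1-t_{0})u'(u_{n}'-u')+(1-t_{0})^{2}\abs{u'}^{2}$ together with $\int_{0}^{R}r^{N-1}u'(u_{n}'-u')\,dr=\scalarh{u}{u_{n}-u}\to0$ identifies the $\liminf$ of the last integral. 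Since $j'(t_{0}u')u'\geq0$, the resulting inequality has the form $\tfrac12\liminf_{n}\normh{u_{n}-u}^{2}\leq\Psi(u)-\Psi(t_{0}u)-(\text{nonnegative terms})\leq\Psi(u)-\Psi(t_{0}u)$; letting $t_{0}\to1^{-}$, monotone convergence gives $\Psi(t_{0}u)\to\Psi(u)$, whence $\liminf_{n}\normh{u_{n}-u}=0$ and a subsequence of $\{u_{n}\}$ converges strongly in $\mathcal{A}$.

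The main obstacle is this last step: $\Psi$ is strongly convex only on $\{\abs{t}<1\}$, and the weak limit $u$ may satisfy $\abs{u'}=1$ on a set of positive measure, so $j'(u')=u'/\sqrt{1-\abs{u'}^{2}}$ need not be integrable and one cannot test directly with $u'$. Replacing $u'$ by $t_{0}u'$ keeps all relevant slopes (hence all integrands in the displayed inequality, for fixed $t_{0}$) bounded, and the monotone passage $t_{0}\to1$ absorbs the error — the same truncation device already used in Lemmas \ref{wlsc} and \ref{lem:criticalAREsolutions}.
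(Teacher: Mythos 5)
Your proof is correct and it parts ways with the paper's at the decisive last step. Both arguments begin identically: the sequence lies in the bounded set $K$, a subsequence converges weakly to some $u \in K$, and taking $v=u$ in \eqref{eq:ps} together with the complete continuity of $(\Fpm)'$ (Lemma \ref{Fcompact}) and weak lower semicontinuity of $\Psi$ (Lemma \ref{wlsc}) gives $\Psi(u_{n}) \to \Psi(u)$. To upgrade this to strong convergence the paper goes a longer way around: it first shows that $u$ is a critical point of $\Ipm$, invokes Lemma \ref{lem:criticalAREsolutions} to see that $u$ is a weak solution, then Lemma \ref{C1} to obtain the uniform bound $\abs{u'} < 1-\varepsilon$, and finally uses this pointwise strict inequality to compare $\sqrt{1-\abs{u_{n}'}^{2}}-\sqrt{1-\abs{u'}^{2}}$ with $\abs{u'}^{2}-\abs{u_{n}'}^{2}$ and conclude $\lim\normh{u_{n}} \leq \normh{u}$. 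You instead use the strong convexity $j''(t)=(1-t^{2})^{-3/2}\geq1$ of the density, applied at the truncated comparison point $\tau = t_{0}u'$ with $t_{0}<1$ so that $j'(\tau)$ is uniformly bounded without any regularity information about $u$, and then let $t_{0}\to1^{-}$ by monotone convergence. Your route is shorter and more self-contained: it never has to establish that the weak limit is a critical point, and it does not depend on the regularity chain critical point $\Rightarrow$ weak solution $\Rightarrow$ $\abs{u'}$ bounded away from $1$ (Lemmas \ref{lem:criticalAREsolutions} and \ref{C1}), which the paper uses only for this purpose. It also sidesteps the paper's final displayed comparison $\int r^{N-1}(\abs{u'}^{2}-\abs{u_{n}'}^{2})/\sqrt{2\varepsilon-\varepsilon^{2}}\,dr \geq \int r^{N-1}(\abs{u'}^{2}-\abs{u_{n}'}^{2})/(\sqrt{1-\abs{u'}^{2}}+\sqrt{1-\abs{u_{n}'}^{2}})\,dr$, which is not a pointwise inequality on the set where $\abs{u_{n}'}>\abs{u'}$, so the quantitative strong-convexity estimate is a cleaner conclusion here. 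The only thing the paper's route delivers as a by-product — that the weak limit is a critical point — is not used elsewhere and is in any case immediate once strong convergence is proved.
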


\begin{proof}
	To fix ideas, let $\{u_n\}$ be a (PS) sequence for $\Ip$. Since
	the Palais-Smale sequence $\{u_n\} \subset K$, then it is bounded
	and we may assume that $u_{n} \rightharpoonup u \in K$. According
	to Lemma \ref{wlsc}, $\Psi$ is weakly lower semi-continuous on
	$\mathcal{A}$, so we have
	\begin{equation}
		\label{wlsc1}
		\Psi(u) \leq \liminf_{n \to +\infty}\Psi(u_{n}).
	\end{equation}
	Let us show that $u$ is a weak solution of \eqref{eRMCE+}.
	For any $v\in \mathcal{A}$, we know that
	\begin{equation*}
		\scalarh{(\Fp)'(u_{n})}{v-u_{n}} + \Psi(v) - \Psi(u_{n}) \geq
		-\varepsilon_{n} \normh{v-u_{n}}.
	\end{equation*}
	Taking the $\limsup$ we obtain
	\begin{equation*}
		\limsup_{n \to +\infty} \left(\scalarh{(\Fp)'(u_{n})}{v -
		u_{n}} + \Psi(v) - \Psi(u_{n})\right) \geq \limsup_{n \to
		+\infty} \left( -\varepsilon_{n} \normh{v-u_{n}} \right),
	\end{equation*}
	and hence
	\begin{equation*}
		\lim_{n \to +\infty} \scalarh{(\Fp)'(u_{n})}{v - u_{n}} +
		\Psi(v) - \liminf_{n \to +\infty} \Psi(u_{n}) \geq -\lim_{n
		\to +\infty} \varepsilon_{n} \normh{v - u_{n}}.
	\end{equation*}
	Since $(\Fp)'(u)$ is complete continuous (see Lemma
	\ref{Fcompact}) and \eqref{wlsc1} holds we find that
	\begin{equation*}
		\scalarh{(\Fp)'(u)}{v - u} + \Psi(v) - \Psi(u)
		\geq 0,
	\end{equation*}
	which means that $u$ is a critical point of $\Ip$. From Lemma
	\ref{lem:criticalAREsolutions}, $u$ is a weak solution of
	\eqref{RMCE} and follows from Lemma \ref{C1} that there exists a
	constant $0 < \varepsilon < 1$ such that $\abs{u'(r)} <
	1-\varepsilon$ for all $r \in [0,R]$.
	
	Now let $v=u$ in \eqref{eq:ps}. We have
	\begin{multline*}
		\scalarh{\Fp(u_{n})}{u - u_{n}} +
		\int^{R}_{0} r^{N-1} \left(\sqrt{1-\abs{u_{n}'}^{2}}
		-\sqrt{1-\abs{u'}^{2}})\right) \, dr \\
		\geq -\varepsilon_{n} \normh{u-u_{n}}.
	\end{multline*}
	Since $u_{n}\rightharpoonup u \in K$ using Lemma \ref{Fcompact}
	again we have that $\scalarh{(\Fp)'(u_{n})}{u - u_{n}} \to 0$ as
	$n \to +\infty$ and we deduce that
	\begin{multline*}
		\lim_{n \to +\infty} \int^{R}_{0} r^{N-1} \frac{\abs{u'}^{2} -
		\abs{u_{n}'}^{2}}{\sqrt{2 \varepsilon - \varepsilon^{2}}} \,
		dr \\
		\geq \lim_{n \to +\infty}\int^{R}_{0} r^{N-1}
		\frac{\abs{u'}^{2} - \abs{u_{n}'}^{2}}{\sqrt{1 - \abs{u'}^{2}}
		+ \sqrt{1 - \abs{u_{n}'}^{2}}} \, dr \geq 0.
	\end{multline*}
	From this we deduce that $\lim_{n \to \infty}\normh{u_{n}} \leq
	\normh{u}$. Since we also have that $\normh{u} \leq \lim_{n \to
	+\infty} \normh{u_{n}}$ it follows that $u_{n} \to u$ in
	$\mathcal{A}$.
\end{proof}

\section{Existence of critical points}

In this section we will prove the existence of critical points for the
functionals $I^{\pm}_{\lambda}$. We assume in this section that
\ref{it:fC}, \ref{it:fat0}, \ref{it:AR}, \ref{it:fsuper} and
\ref{it:b} hold for $f$ (and hence also for $\fpm$). We start by
proving the existence of two minima, one positive and one negative
(the global minima of, respectively, $\Ip$ and $\Imm$).

\begin{lem}
	\label{lem:globalmin}
	For all $\lambda \in (0,\infty)$ and $R > 0$ there exist $u_{+}
	\geq 0$ and $u_{-} \leq 0$ such that
	\begin{equation*}
		\Ipm(u_{\pm}) = \mu_{\pm} = \inf_{u \in \mathcal{A}} \Ipm(u).
	\end{equation*}
	
	Moreover there  exist $\bar{u}_{\pm} \in K$ with
	\begin{equation}
		\label{eq:MPe}
		\normh{\bar{u}_{\pm}}^{2} = \frac{R^{N}}{N} \qquad \text{and}
		\qquad \Ipm(u_{\pm}) = \mu_{\pm} \leq \Ipm(\bar{u}_{\pm}) \leq
		-R^{N}.
	\end{equation}
\end{lem}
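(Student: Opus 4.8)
The plan is to produce $u_{+}$ and $u_{-}$ as global minimizers of $\Ip$ and $\Imm$ respectively (hence critical points, by Szulkin's theory), and then to verify the quantitative bound $\mu_{\pm}\le-R^{N}$ by evaluating the functional on the explicit competitor $r\mapsto R-r$, which is the function of maximal $\mathcal{A}$-norm inside $K$.

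First I would check that $\Ipm$ is bounded below. On $K$ we have $0\le\Psi(u)\le R^{N}/N$ by \eqref{inequality} and Lemma \ref{Kclose}, while $\abs{u}\le R$ on $K$, $b\le b_{1}$, and $\abs{\fpm}\le c(R)$ by \eqref{fbounded}, so $\Fpm$ is bounded below on $K$; off $K$ the functional is $+\infty$. Hence $\mu_{\pm}:=\inf_{\mathcal{A}}\Ipm$ is finite. Since $\Ipm$ satisfies the Palais--Smale condition by Lemma \ref{lem:PS}, Proposition \ref{minimumcritical} produces a critical point $u_{\pm}$ with $\Ipm(u_{\pm})=\mu_{\pm}$; by Lemma \ref{lem:criticalAREsolutions}, $u_{+}$ is then a nonnegative and $u_{-}$ a nonpositive classical solution of \eqref{RMCE}, which is the first assertion. (Equivalently one could argue by the direct method, since $K$ is weakly compact, $\Psi$ is weakly lower semicontinuous by Lemma \ref{wlsc}, and $\Fpm$ is weakly continuous by the compact embedding in Lemma \ref{estimate} together with \eqref{fbounded}; the sign would then follow from $\Ipm(u^{\pm})\le\Ipm(u)$, using $\abs{(u^{\pm})'}\le\abs{u'}$ a.e.\ and $\ffpm(r,u^{\pm})=\ffpm(r,u)$.)

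For the second assertion I would set $\bar u_{+}(r)=R-r$ and $\bar u_{-}(r)=r-R$. Both lie in $K$ because $\abs{\bar u_{\pm}'}\equiv1$, whence $\normh{\bar u_{\pm}}^{2}=\int_{0}^{R}r^{N-1}\,dr=R^{N}/N$ and $\Psi(\bar u_{\pm})=\int_{0}^{R}r^{N-1}\,dr=R^{N}/N$. Since $\abs{\bar u_{\pm}(r)}=R-r\in[0,R]$, along $\bar u_{\pm}$ the modifications $\fpm$ coincide with $f$, so $\bar u_{+}^{+}=\bar u_{-}^{-}=R-r$ and $\ffpm(r,\bar u_{\pm})=F(r,\pm(R-r))$. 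Dropping the nonpositive term $-\frac{\lambda}{q}\int_{0}^{R}r^{N-1}b(r)(R-r)^{q}\,dr$, using the superlinearity bound \ref{it:fsuper} on $F$, and the Beta integral $\int_{0}^{R}r^{N-1}(R-r)^{\theta}\,dr=R^{N+\theta}\,\Gamma(N)\Gamma(\theta+1)/\Gamma(N+\theta+1)$, one gets
\begin{equation*}
	\Ipm(\bar u_{\pm})\le\frac{R^{N}}{N}-\int_{0}^{R}r^{N-1}\bigl(a_{1}(R-r)^{\theta}-a_{2}\bigr)\,dr
	=R^{N}\left(\frac{1+a_{2}}{N}-a_{1}R^{\theta}\frac{\Gamma(N)\Gamma(\theta+1)}{\Gamma(N+\theta+1)}\right)\le-R^{N},
\end{equation*}
the last inequality being exactly \eqref{eq:stimaaa}. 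Since $\bar u_{\pm}\in K\subset\mathcal{A}$ this yields $\mu_{\pm}=\Ipm(u_{\pm})\le\Ipm(\bar u_{\pm})\le-R^{N}$, i.e.\ \eqref{eq:MPe}; note in passing that then $\mu_{\pm}<0=\Ipm(0)$, so the minimizers $u_{\pm}$ are nontrivial.

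I do not expect a genuine obstacle. The one conceptual point is recognizing that $r\mapsto R-r$ is the extremal element of $K$ (for which $\Psi$ attains its maximum $R^{N}/N$) and that condition \eqref{eq:stimaaa} is precisely the calibration making the superlinear gain outweigh that area term. On the technical side, if the direct method is used instead of Proposition \ref{minimumcritical}, the weak continuity of $\Fpm$ is the only slightly delicate routine verification, and it follows from the compactness of $\mathcal{A}\hookrightarrow L^{q}_{N-1}\cap L^{1}_{N-1}$ in Lemma \ref{estimate} together with the bound \eqref{fbounded}.
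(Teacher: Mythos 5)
Your proposal is correct and follows essentially the same route as the paper: boundedness of $\Ipm$ from below on $K$ combined with the Palais--Smale condition and Proposition \ref{minimumcritical} to produce the global minimizers, followed by evaluation of $\Ipm$ on the explicit competitor $\bar u_{\pm}(r)=\pm(R-r)$ (which the paper writes as $\pm R(1-r/R)$) and the Beta-integral identity to obtain the bound $\le -R^{N}$ via \eqref{eq:stimaaa}. Your added remarks (the alternative direct-method argument, the explicit invocation of Lemma \ref{lem:criticalAREsolutions} for the sign of $u_{\pm}$, and the observation that $\mu_{\pm}<0$ so the minimizers are nontrivial) are consistent with and merely amplify the paper's argument.
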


\begin{proof}
	From \eqref{fbounded} we know that $\abs{\fpm(r,u)} \leq c$, and
	thus
	\begin{equation*}
		F^{\pm}(r,u) \leq c \abs{u^{\pm}}, \qquad \forall 
		r\in[0, R], u \in \mathbb{R}.
	\end{equation*}
	We have that $\Ipm(u) = +\infty$ if $u \in \mathcal{A} \setminus
	K$. Recalling that $\abs{u(r)} \leq R$ if $u \in K$ we have
	\begin{align*}
		I^{\pm}_{\lambda}(u) &\geq - \frac{\lambda b_{1}}{q}
		\int^{R}_{0} r^{N-1} \abs{u^{\pm}}^{q} \, dr - \int^{R}_{0}
		r^{N-1} c \abs{u^{\pm}} \,dr \\
		&\geq -\frac{\lambda b_{1}}{q} \int^{R}_{0} r^{N-1} R^{q}\, dr
		- \int^{R}_{0} r^{N-1} c R \, dr = -\frac{\lambda b_{1}
		R^{N+q}}{qN}-\frac{c R^{N+1}}{N},
	\end{align*}
	and $I^{\pm}_{\lambda}$ is bounded from below on $\mathcal{A}$.
	Since $\Ipm$ satisfies the Palais-Smale condition, it follows from
	Proposition \ref{minimumcritical} that there exist $u_{\pm} \in K$
	such that
	\begin{equation*}
		I^{\pm}_{\lambda}(u_{\pm}) = \inf_{u \in \mathcal{A}} \Ipm(u).
	\end{equation*}
	
	We now estimate $\mu_{\pm}$.
	Since
	\begin{equation*}
		F(r,u) \geq a_{1}\abs{u}^{\theta} - a_{2}, \qquad \forall (r,u)
		\in [0,R] \times [ -R ,R].
	\end{equation*}
	Let 
	\begin{equation*}
		\bar{u}_{\pm}(r) = \pm R(1 - \frac{r}{R}) \in K.
	\end{equation*}
	We have that 
	\begin{equation*}
		\normh{\bar{u}_{\pm}}^{2} = \int_{0}^{R} r^{N-1}
		\abs{\bar{u}_{\pm}'(r)}^{2} \, dr = \frac{R^{N}}{N}
	\end{equation*}
	and we deduce, using \eqref{eq:stimaaa} of assumption
	\ref{it:fsuper}
	\begin{align*} 
		I_{\lambda}(\bar{u}_{\pm}) &\leq (1 + a_{2}) R^{N}
		\int^{1}_{0} t^{N-1} \, dt - a_{1} R^{N+\theta} \int^{1}_{0}
		t^{N-1} (1-t)^{\theta} \, dt \\
		&\leq (1 + a_{2}) \frac{R^{N}}{N} - a_{1} R^{N + \theta}
		\int^{1}_{0} t^{N-1} (1 - t)^{\theta} \, dt \\
		&= (1 + a_{2}) \frac{R^{N}}{N} - a_{1} R^{N + \theta}
		\frac{\Gamma(N)\Gamma(\theta+1)}{\Gamma(N+\theta+1)} \leq 
		-R^{N}.
	\end{align*}
\end{proof}

We will now show that we can apply the Mountain Pass Theorem to our 
functionals $\Ipm$. 

\begin{lem}
	\label{lem:locmin}
	There exist $\lambda_{*}(R) > 0$ and $\rho_{+}(R) > 0$ and such that
	for all $\lambda \in (0,\lambda_{*}(R))$
	\begin{equation*}
		\Ip(u) \geq \frac{\rho_{+}(R)^{2}}{8} \qquad \text{for all } 
		\normh{u} = \rho_{+}(R).
	\end{equation*}
	Moreover for all $\lambda \in (0,\lambda_{*}(R))$ there is 
	$\rho_{-}(\lambda,R) \in (0, \rho_{+}(R))$ such that 
	\begin{equation*}
			\Ip(u) \geq \frac{\rho_{-}(\lambda,R)^{2}}{8} \qquad
			\text{for all } \rho_{-}(\lambda,R) \leq \normh{u} \leq
			\rho_{+}(R)
	\end{equation*}
	and $\rho_{-}(\lambda,R) \to 0$ as $\lambda \to 0$.
	
	Similar results holds for $\Imm$.
\end{lem}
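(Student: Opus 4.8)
The plan is to use that, on $K$, $\Psi$ dominates $\tfrac12\normh{\cdot}^{2}$ (a consequence of \eqref{inequality}), that the contribution of $f$ is of order strictly higher than quadratic near $s=0$ (by \ref{it:fat0}), and that the only genuinely sublinear term is the $\lambda$-term, which has exponent $q<2$ and therefore forces a strictly positive inner radius $\rho_{-}$ rather than letting one work down to the origin. Since $\Ip\equiv+\infty$ on $\mathcal{A}\setminus K$, I may assume throughout that $u\in K$; otherwise every inequality below is trivial.

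First I would fix $p\in(2,\tfrac{2N}{N-2})$ and record a pointwise bound on the primitive $\ffp$. Using \ref{it:fat0} for $\fp$ (which inherits \ref{it:fC} and \ref{it:fat0} from $f$) together with the boundedness \eqref{fbounded}, for each $\varepsilon>0$ there is $C_{\varepsilon}=C_{\varepsilon}(R,p)>0$ with
\[
\ffp(r,s)\le \tfrac{\varepsilon}{2}s^{2}+C_{\varepsilon}\abs{s}^{p}\qquad\text{for all }r\in[0,R],\ s\in\mathbb{R}:
\]
pick $\delta>0$ with $\abs{\fp(r,t)}\le\varepsilon\abs{t}$ for $\abs{t}\le\delta$, integrate to get the bound for $\abs{s}\le\delta$, and for $\abs{s}>\delta$ control the remaining mass by $c(R)\abs{s}\le c(R)\delta^{1-p}\abs{s}^{p}$. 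Combining this with \ref{it:b}, with $\Psi(u)\ge\tfrac12\normh{u}^{2}$, and with the Poincar\'e--Sobolev inequalities of Lemma \ref{estimate3} for the exponents $2$, $p$ and $q$ (constants $C_{2}$, $C_{p}$, $C_{q}$), I obtain for $u\in K$
\[
\Ip(u)\ \ge\ \Big(\tfrac12-\tfrac{\varepsilon C_{2}}{2}\Big)\normh{u}^{2}-C_{\varepsilon}C_{p}\normh{u}^{p}-\tfrac{\lambda b_{1}C_{q}}{q}\normh{u}^{q},
\]
and choosing $\varepsilon=\tfrac{1}{4C_{2}}$ makes the leading coefficient equal to $\tfrac38$.

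Next I would fix the radii. Choose $\rho_{+}(R)\in(0,\sqrt{R^{N}/N})$ small enough that $C_{\varepsilon}C_{p}\,\rho_{+}(R)^{\,p-2}\le\tfrac18$ (possible since $p-2>0$; the constraint $\rho_{+}(R)<\sqrt{R^{N}/N}$ is only imposed so that the $\bar{u}_{\pm}$ of Lemma \ref{lem:globalmin} lie outside $\overline{B}_{\rho_{+}}$). Then for every $u\in K$ with $\normh{u}\le\rho_{+}(R)$,
\[
\Ip(u)\ \ge\ \tfrac14\normh{u}^{2}-\tfrac{\lambda b_{1}C_{q}}{q}\normh{u}^{q}.
\]
On $\normh{u}=\rho_{+}(R)$ the right-hand side is $\ge\tfrac18\rho_{+}(R)^{2}$ as soon as $\tfrac{\lambda b_{1}C_{q}}{q}\rho_{+}(R)^{q}\le\tfrac18\rho_{+}(R)^{2}$, i.e. for $\lambda<\lambda_{*}(R):=\tfrac{q}{8b_{1}C_{q}}\rho_{+}(R)^{\,2-q}$, which gives the first assertion. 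For the annulus I would set $\rho_{-}(\lambda,R):=\bigl(\tfrac{8\lambda b_{1}C_{q}}{q}\bigr)^{1/(2-q)}$, so that $\tfrac{\lambda b_{1}C_{q}}{q}=\tfrac18\rho_{-}(\lambda,R)^{\,2-q}$; then $\rho_{-}(\lambda,R)\to0$ as $\lambda\to0$ (since $2-q>0$) and $\rho_{-}(\lambda,R)<\rho_{+}(R)$ exactly when $\lambda<\lambda_{*}(R)$. Using $\normh{u}\ge\rho_{-}(\lambda,R)$ in the last display gives, for $\rho_{-}(\lambda,R)\le\normh{u}\le\rho_{+}(R)$,
\[
\Ip(u)\ \ge\ \tfrac14\normh{u}^{2}-\tfrac18\rho_{-}(\lambda,R)^{\,2-q}\normh{u}^{q}\ \ge\ \tfrac14\normh{u}^{2}-\tfrac18\normh{u}^{2}\ =\ \tfrac18\normh{u}^{2}\ \ge\ \tfrac18\rho_{-}(\lambda,R)^{2}.
\]
The statement for $\Imm$ follows verbatim with $u^{+}$, $\fp$ replaced by $u^{-}$, $\fm$.

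The only point requiring care is that $\rho_{+}(R)$ must be chosen independently of $\lambda$ — it depends only on $R$ through $C_{\varepsilon},C_{p}$ and the bound $\sqrt{R^{N}/N}$ — whereas the sublinear $\lambda$-term, being of order $q<2$, dominates near the origin and hence forces the strictly positive inner radius $\rho_{-}(\lambda,R)$; beyond this bookkeeping of constants there is no real obstacle.
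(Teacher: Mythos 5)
Your proof is correct and follows essentially the same approach as the paper: the same lower bound $\Psi(u)\ge\tfrac12\normh{u}^{2}$ from \eqref{inequality}, the same $\varepsilon$-splitting of $\ffp$ via \ref{it:fat0} combined with the Poincar\'e--Sobolev inequalities of Lemma~\ref{estimate3}, and the same reduction to the estimate $\Ip(u)\ge A\normh{u}^{2}-c\normh{u}^{\alpha}-\lambda c'\normh{u}^{q}$ with $A\ge\tfrac38$. The only (cosmetic) difference is in the final bookkeeping: you absorb the superquadratic term into the quadratic one using $\normh{u}\le\rho_{+}(R)$ and then handle the sublinear $\lambda$-term separately, whereas the paper bounds both simultaneously by computing the explicit minimizer of $H_{\lambda,R}(t)=\lambda d_{q}t^{q-2}+c_{\varepsilon}d_{\alpha}t^{\alpha-2}$ and choosing $\rho_{-}(\lambda,R)$ to be that minimizer; your version avoids the explicit optimization but the two arguments are otherwise the same.
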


\begin{proof}
	Take any $\alpha \in (2,\frac{2N}{N-2})$. From \ref{it:fat0}
	follows that all $\epsilon > 0$ there is $c_{\epsilon}$ such that
	\begin{equation*}
		\abs{\ffp(r,u)} \leq \frac{1}{2} \epsilon u^{2} + c_{\epsilon}
		\abs{u}^{\alpha}, \qquad \forall (r,u) \in [0,R] \times [0,R].
	\end{equation*}
	Recalling the inequality \eqref{inequality} we find that
	\begin{align*}
		\Ip(u) &\geq \frac{1}{2}\int^{R}_{0}r^{N-1}\abs{u'}^{2}\, dr -
		\int^{R}_{0} r^{N-1} \left(\frac{\lambda b(r)}{q} \abs{u}^{q}
		+ \frac{1}{2} \epsilon u^{2} +
		c_{\epsilon}\abs{u}^{\alpha}\right) \, dr \\
		&\geq \frac{1}{2}(1 - \epsilon {d_{2}}) \normh{u}^{2} -
		\lambda {d_{q}} \normh{u}^{q} - c_{\epsilon}
		{d_{\alpha}}\normh{u}^{\alpha} \\
		&=\normh{u}^{2}\left(\frac{1}{2}(1 - \epsilon {d_{2}}) -
		\lambda {d_{q}} \normh{u}^{q-2} - c_{\epsilon} {d_{\alpha}}
		\normh{u}^{\alpha-2} \right)
	\end{align*}
	where $c_{\epsilon}$, ${d}_{2} = C(N,2,R)$, ${d}_{q} =
	\frac{b_{1}}{q}C(N,q,R)$ and ${d}_{\alpha} = C(N,\alpha,R)$ depend
	on $R$ (see Lemma \ref{estimate3}). Take $\epsilon$ small enough
	so that $A = \frac{1}{2}(1 - \epsilon {d_{2}}) \geq \frac{3}{8}$.
	Let
	\begin{equation*}
		H_{\lambda,R}(t)= \lambda {d_{q}} t^{q-2} + c_{\epsilon}
		{d_{\alpha}} t^{\alpha-2}
	\end{equation*}
	This function has a minimum at the point 
	\begin{equation*}
		t_{\lambda,R} = \left(\frac{\lambda(2 - q) {d}_{q}}{(\alpha -
		2)c_{\epsilon} {d}_{\alpha}}\right)^{\frac{1}{\alpha-q}}
	\end{equation*}
	whose value is
	\begin{equation*}
		H_{\lambda,R}(t_{\lambda,R}) = 
		(\lambda d_{q})^{\frac{\alpha-2}{\alpha-q}}
		(c_{\epsilon} d_{\alpha})^{\frac{2-q}{\alpha-q}}
		\left(\frac{2-q}{\alpha-2}\right)^{\frac{\alpha-2}{\alpha-q}}
		\left( \frac{\alpha-q}{2-q}\right).
	\end{equation*}
	Let 
	\begin{equation*}
		\beta = \frac{ (\alpha -
		q)^{\frac{\alpha-q}{\alpha-2}}}{(2-q)^{\frac{2 - q}{\alpha -
		2}}(\alpha-2)}
	\end{equation*}
	and take 
	\begin{equation*}
		\lambda_{*}(R) =
		\left(\frac{1}{4}\right)^{\frac{\alpha-q}{\alpha-2}}
		\frac{1}{\beta d_{q}(c_{\epsilon}
		d_{\alpha})^{\frac{2-q}{\alpha-2}}}
	\end{equation*}
	so that
	\begin{equation*}
		t_{\lambda_{*}(R),R} = \left( \frac{2-q}{4c_{\epsilon}(\alpha
		- q) d_{\alpha}} \right)^{\frac{1}{\alpha-2}}.
	\end{equation*}
	If $\lambda < \lambda_{*}(R)$ we have that 
	\begin{align*}
		\rho_{-}(\lambda,R) &\equiv t_{\lambda,R} =
		\left(\frac{\lambda(2 - q) {d}_{q}}{(\alpha - 2)c_{\epsilon}
		{d}_{\alpha}}\right)^{\frac{1}{\alpha-q}} 
		< \left(\frac{\lambda_{*}(R)(2 - q) {d}_{q}}{(\alpha -
		2)c_{\epsilon} {d}_{\alpha}}\right)^{\frac{1}{\alpha-q}} \\ &=
		t_{\lambda_{*}(R),R} = \left( \frac{2-q}{4c_{\epsilon}(\alpha
		- q) d_{\alpha}} \right)^{\frac{1}{\alpha-2}} = \rho_{+}(R).
	\end{align*}
	Since both $d_{\alpha}$ and $c_{\epsilon}$ are increasing
	functions of $R$, we have that $\rho_{+}(R)$ is a decreasing
	functions of $R$, while $\rho_{-}(\lambda,R)$ is an increasing
	function of $\lambda \in (0,\lambda_{*}(R))$.
	
	For all $0 < \lambda < \lambda_{*}(R)$ and $\rho_{-}(\lambda, R)
	\leq \rho \leq \rho_{+}(R)$ we have
	\begin{multline*}
		H_{\lambda,R}(t_{\lambda,R}) \leq
		H_{\lambda,R}(\rho) \leq
		H_{\lambda,R}(t_{\lambda_{*},R}) \\= \lambda {d_{q}}
		t_{\lambda_{*},R}^{q-2} + c_{\epsilon} {d_{\alpha}}
		t_{\lambda_{*},R}^{\alpha-2} 
		\leq \lambda_{*} {d_{q}} t_{\lambda_{*},R}^{q-2} +
		c_{\epsilon} {d_{\alpha}} t_{\lambda_{*},R}^{\alpha-2} \leq
		\frac{1}{4},
	\end{multline*}
	and we deduce that for all $u$ such that $\rho_{-}(\lambda,R) \leq
	\normh{u} = \rho \leq \rho_{+}(R)$
	\begin{align*}
		\Ip(u) &\geq \rho^{2} \left(\frac{1}{2}(1 - \epsilon {d_{2}})
		- H_{\lambda,R}(\rho) \right) \\
		&\geq \rho^{2} (\frac{3}{8} - \frac{1}{4}) \geq 
		\frac{\rho_{-}(\lambda, R)^{2}}{8}.
	\end{align*}
	Let us also remark that since we can assume that $c_{\epsilon}
	\geq 1$ and since $d_{\alpha} = C(N, \alpha, R) \to +\infty$ as $R
	\to +\infty$ we have that $\rho_{+}(R) \to 0$ as $R \to +\infty$.
\end{proof}

The previous Lemma, together with Lemma \ref{lem:globalmin} show that
$\Ipm$ have the geometry of the Mountain Pass Theorem. 

We now prove the existence of local minima (different from the global
ones) in a neighborhood of the origin.
\begin{lem}
	\label{lem:localmin}
	For all $R > 0$ there exists $\lambda_{**} < \lambda_{*}(R)$ such
	that for all $\lambda \in (0,\lambda_{**})$ the functionals $\Ipm$
	have local minima $v_{\pm}$ such that $\normh{v_{\pm}} \leq
	\rho_{-}(\lambda, R)$ and 
	\begin{equation*}
		0 > I(v_{\pm}) = \min_{B_{\rho_{+}(R)}} \Ipm > -R^{N}.
	\end{equation*}
\end{lem}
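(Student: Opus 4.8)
The plan is to realise $v_{+}$ (and, symmetrically, $v_{-}$) as a minimiser of $\Ip$ over the closed ball $\overline{B}_{\rho_{+}(R)} = \settc{u \in \mathcal{A}}{\normh{u} \leq \rho_{+}(R)}$, and then to show that this minimiser lies strictly inside $B_{\rho_{-}(\lambda,R)}$, so that it is automatically a local minimum of $\Ip$ on all of $\mathcal{A}$. That the infimum is attained is the direct method: $\overline{B}_{\rho_{+}(R)}$ is bounded, closed and convex, hence weakly sequentially compact in the Hilbert space $\mathcal{A}$; $\Psi$ is weakly lower semicontinuous by Lemma \ref{wlsc}; and $\Fp$ is sequentially weakly continuous by the compact embedding of Lemma \ref{estimate} (equivalently, by the complete continuity of $(\Fp)'$, Lemma \ref{Fcompact}). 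Thus $\Ip = \Psi + \Fp$ attains its infimum on $\overline{B}_{\rho_{+}(R)}$ at some $v_{+}$, and since $\Ip \equiv +\infty$ off $K$, we get $v_{+} \in K$ (hence $\norml[\infty]{v_{+}} \leq R$) as soon as we know $\Ip(v_{+}) < +\infty$.

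The first substantive step is to exhibit, for every small $\lambda$, a competitor in $\overline{B}_{\rho_{+}(R)}$ with negative energy; this is where the subquadratic exponent $q \in (1,2)$ is essential, since it makes the concave term $-\frac{\lambda}{q}\int_{0}^{R} r^{N-1} b \abs{u^{+}}^{q}\,dr$ dominate the quadratic $\Psi$ near the origin. Take $\bar{u}_{+}(r) = R(1 - \tfrac{r}{R}) \in K$ as in Lemma \ref{lem:globalmin} and rescale, $u_{t} = t\bar{u}_{+}$ with $t \in (0,1)$. Then $\Psi(u_{t}) \leq t^{2} R^{N}/N$ by \eqref{inequality}, while $-\frac{\lambda}{q}\int_{0}^{R} r^{N-1} b\abs{u_{t}^{+}}^{q}\,dr \leq -\frac{\lambda b_{0}}{q} t^{q} C_{0}$ with $C_{0} = \int_{0}^{R} r^{N-1}(R-r)^{q}\,dr > 0$, and $-\int_{0}^{R} r^{N-1} \ffp(r,u_{t})\,dr \leq 0$ because $\ffp(r,\cdot) \geq 0$ on $[0,+\infty)$. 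Choosing $t_{\lambda} = \bigl(\tfrac{\lambda b_{0} C_{0} N}{2qR^{N}}\bigr)^{1/(2-q)}$ gives $\Ip(u_{t_{\lambda}}) \leq -\tfrac{\lambda b_{0} C_{0}}{2q}t_{\lambda}^{q} < 0$; moreover $t_{\lambda} \to 0$ as $\lambda \to 0$, so for $\lambda$ small we also have $t_{\lambda} < 1$ and $\normh{u_{t_{\lambda}}} = t_{\lambda}\sqrt{R^{N}/N} < \rho_{+}(R)$, making $u_{t_{\lambda}}$ admissible. Hence $\Ip(v_{+}) \leq \Ip(u_{t_{\lambda}}) < 0$.

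Now Lemma \ref{lem:locmin} gives $\Ip(u) \geq \rho_{-}(\lambda,R)^{2}/8 > 0$ on the annulus $\rho_{-}(\lambda,R) \leq \normh{u} \leq \rho_{+}(R)$; since $\Ip(v_{+}) < 0$ and $\normh{v_{+}} \leq \rho_{+}(R)$, necessarily $\normh{v_{+}} < \rho_{-}(\lambda,R)$. In particular $v_{+}$ is interior to $B_{\rho_{+}(R)}$, so it minimises $\Ip$ on a full neighbourhood of itself and is a local minimum of $\Ip$ on $\mathcal{A}$ — hence a critical point (by the Szulkin proposition) and a nonnegative classical solution of \eqref{RMCE} (by Lemma \ref{lem:criticalAREsolutions}). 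For the lower bound $\Ip(v_{+}) > -R^{N}$ we use the smallness $\normh{v_{+}} < \rho_{-}(\lambda,R)$: dropping $\Psi \geq 0$ and using $\abs{\ffp(r,s)} \leq c(R)\abs{s}$ from \eqref{fbounded} together with Lemma \ref{estimate3},
\[
	\Ip(v_{+}) \;\geq\; -\frac{\lambda b_{1}}{q}\norml[N-1,q]{v_{+}}^{q} - c(R)\norml[N-1,1]{v_{+}} \;\geq\; -\frac{\lambda b_{1} C(N,q,R)}{q}\rho_{-}(\lambda,R)^{q} - c(R)C(N,1,R)\rho_{-}(\lambda,R),
\]
whose right-hand side tends to $0$ as $\lambda \to 0$ because $\rho_{-}(\lambda,R) \to 0$. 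Collecting all the smallness conditions imposed above yields a single $\lambda_{**} \in (0,\lambda_{*}(R))$ such that, for $\lambda \in (0,\lambda_{**})$, all assertions hold; running the same argument with $\Imm$ and $\bar{u}_{-}(r) = -R(1-\tfrac{r}{R})$ produces $v_{-}$, and taking the smaller threshold gives the common $\lambda_{**}$. As a consistency check, $\Ip(v_{+}) > -R^{N} \geq \mu_{+}$, so $v_{+}$ is genuinely distinct from the global minimiser $u_{+}$ of Lemma \ref{lem:globalmin}.

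I expect the main obstacle to be the second step — the simultaneous bookkeeping of the competing powers of $\lambda$ and of $\normh{u}$: one must ensure that the concave $\abs{u}^{q}$-term really creates a point of negative energy close enough to the origin (inside $B_{\rho_{-}(\lambda,R)}$, below the positive barrier supplied by Lemma \ref{lem:locmin}), while keeping that point inside $\overline{B}_{\rho_{+}(R)}$ and keeping $\Ip(v_{+})$ above $-R^{N}$. Once the scaling $t_{\lambda}\asymp\lambda^{1/(2-q)}$ is identified, the existence of the minimiser and its being a local minimum are soft.
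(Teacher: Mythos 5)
Your proof is correct and follows essentially the same plan as the paper's: minimize $\Ip$ over the ball $\overline{B}_{\rho_{+}(R)}$, show the infimum is negative by scaling the competitor $\bar{u}_{+}$, conclude the minimizer sits inside $B_{\rho_{-}(\lambda,R)}$ from the positive barrier of Lemma \ref{lem:locmin}, and bound the minimum from below by a quantity that vanishes as $\rho_{-}(\lambda,R) \to 0$. The only (cosmetic) difference is in the final lower bound: the paper reuses the $\frac{1}{2}\epsilon u^{2}+c_{\epsilon}\abs{u}^{\alpha}$ estimate from Lemma \ref{lem:locmin}, whereas you drop $\Psi$ and use the uniform bound $\abs{\ffp(r,s)}\leq c(R)\abs{s}$ from \eqref{fbounded}; both yield a quantity tending to $0$ with $\rho_{-}(\lambda,R)$, so the conclusion is the same.
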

\begin{proof}
	Let us prove the existence of $v_{+}$, the existence of $v_{-}$ 
	follows from the same arguments applied to $\Imm$.
	
	For any $\lambda\in (0,\lambda_{*}(R))$ let
	\begin{equation*}
		B_{\rho(R)} = \settc {u \in K} {\normh{u} \leq 
		\rho(R)}, \qquad  \partial B_{\rho(R)} = \settc{u \in 
		K}{\normh{u}= \rho(R)}.
	\end{equation*}
	We know from Lemma \ref{lem:locmin} that for all $u$ with $\normh{u} = 
	\rho(R)$ we have that
	\begin{equation*}
		\Ip(u) \geq \frac{\rho_{-}(\lambda, R)^{2}}{8} \qquad \text{for
		all } u \in \partial B_{\rho(R)}.
	\end{equation*}
	Set
	\begin{equation*}
		d^{+}_{\lambda,R} = \inf \settc{ \Ip(u)}{u \in B_{\rho(R)}}.
	\end{equation*}
	Take any positive function $v \in K$ (for example $\bar{v}_{+} = 
	R\left( 1 - \frac{r}{R}\right)$) and let $t \in
	(0,\frac{\rho(R)}{\normh{v}})$ so that $tv \in B_{\rho(R)}$. We
	have that
	\begin{align*}
		\Ip(tv) &\leq \int^{R}_{0}r^{N-1} \abs{tv'}^{2} \, dr
		- \int^{R}_{0} r^{N-1} \frac{\lambda b_{0}}{q} \abs{tv^{+}}^{q} \,
		dr - \int^{R}_{0} r^{N-1} \ffp(r,tv) \, dr \\
		&\leq t^{2} \int^{R}_{0}r^{N-1}\abs{v'}^{2} \, dr - t^{q}
		\int^{R}_{0}r^{N-1}\frac{\lambda b_{0}}{q}\abs{v}^{q}\, dr < 0,
	\end{align*}
	for $t$ small enough. Consequently, $d^{+}_{\lambda} < 0$. From
	Proposition \ref{minimumcritical} it follows that
	$d_{\lambda}^{+}$ is a critical value and there exists $w_{+} \in
	K$ such that $\Ip(w_{+}) = d_{\lambda}^{+}$.

	We now estimate the value of the infimum $d_{\lambda}^{+}$. As in 
	the proof of Lemma \ref{lem:locmin} we have that
	\begin{align*}
		\Ip(u) &\geq \frac{1}{2}(1 - \epsilon {d_{2}}) \normh{u}^{2} -
		\lambda {d_{q}} \normh{u}^{q} - c_{\epsilon}
		{d_{\alpha}}\normh{u}^{\alpha}\\
		&\geq \inf_{\rho \leq \rho_{-}(\lambda,R)} \left\{
		\frac{1}{2}(1 - \epsilon {d_{2}})\rho^{2} - \lambda {d_{q}}
		\rho^{q} - c_{\epsilon} d_{\alpha} \rho^{\alpha} \right\} \\
		&\geq - \lambda {d_{q}} \rho_{-}(\lambda,R)^{q} - c_{\epsilon}
		d_{\alpha} \rho_{-}(\lambda,R)^{\alpha}.
	\end{align*}
	Recalling that $\rho_{-}(\lambda,R) \to 0$ as $\lambda \to 0$ we
	deduce that there exists $\lambda_{**} < \lambda_{*}(R)$ such that
	for all $\lambda \in (0,\lambda_{**})$
	\begin{equation*}
		d^{+}_{\lambda,R} \geq -R^{N}.
	\end{equation*}
\end{proof}

\begin{proof}[Proof of Theorem \ref{thm:ex6}]
	Let us prove the existence of three critical points of the
	functional $\Ip$ to which correspond, see Lemma
	\ref{lem:criticalAREsolutions}, three positive solutions of
	\eqref{MCE}.
	
	The first critical point $u_{+}$ is the global minimum of the
	functional $\Ip$, whose existence is given by Lemma
	\ref{lem:globalmin}. We also know that there is a function
	$\bar{u}_{\pm}$ with norm $\normh{\bar{u}_{\pm}}^{2} =
	\frac{R^{N}}{N}$ and such that $\Ip(\bar{u}_{+}) \leq -R^{N}$.
	
	This together with Lemma \ref{lem:locmin} shows that $\Ip$ has the
	geometry of the Mountain Pass Theorem
	\cite{Ambrosetti_Rabinowitz_1973} (remark that
	$\normh{\bar{u}_{\pm}} > \rho_{-}(\lambda,R)$, possibly taking
	$\lambda_{*}$ smaller). Since the Palais-Smale condition holds
	true, see Lemma \ref{lem:PS}, we immediately deduce the existence
	of a critical point $w_{+}$ at mountain pass level
	\begin{equation*}
		c = \inf_{\gamma \in \Gamma} \max_{t \in [0,1]} 
		\Ip(\gamma(t)),
	\end{equation*}
	where $\Gamma(t) = \settc{\gamma \in C([0,1])}{\gamma(0) = 0, \
	\gamma(1) = u_{+}}$. In particular $\Ip(w_{+}) \geq
	\frac{\rho_{+}(R)^{2}}{8}$.
	
	The existence of a local minimum $v_{+}$ follows from Lemma
	\ref{lem:localmin}. Moreover we have, for $\lambda \in (0,
	\lambda_{**})$ that
	\begin{equation}
		\Ip(w_{+}) \geq \frac{\rho_{+}^{2}(R)}{8}> 0 > \Ip(v_{+}) \geq
		-R^{N} > \Ip(u_{+})
	\end{equation}
	which shows that the three critical points are different and
	concludes the proof of the Theorem.
\end{proof}

\section{Existence of an additional solution}

The goal of this section is to establish the existence of an
additional critical point $v_{0}$ satisfying $I_{\lambda}(v_{0})<0$.
We find it as Mountain Pass Theorem between the two local minima
$v_{+}$ and $v_{-}$, following ideas of Ambrosetti, Garcia Azorero and
Peral in \cite{Ambrosetti_GarciaAzorero_Peral_1996}. 

We assume in this section that \ref{it:fC}, \ref{it:fat0},
\ref{it:AR}, \ref{it:fsuper}, \ref{it:asymmetry}, \ref{it:fprimo} and
\ref{it:b} hold.

The first step is to show that $v_{\pm}$, the local minima of $\Ipm$
found in Theorem \ref{thm:ex6}, are also local of the complete
functional $I_{\lambda}$. In this section we will modify the 
nonlinear term and assume that $f(r,s)$ is as follows:
\begin{equation*}
	\hat{f}(r,s) =
	\begin{cases}
	   f(r,s) &\text{if }  \abs{s} \leq R \\
	   \text{linear} &\text{if } R < \abs{s} < R+1 \\
	   0 &\text{if }  \abs{s} \geq R+1
	\end{cases}
\end{equation*}
The same argument as in section \ref{sec:variational} shows that
critical points and weak solutions involving the modified nonlinearity
are actually critical points and solutions of the original problem. We
will write $f$ instead of $\hat{f}$ for simplicity. Also all the
results of section \ref{sec:variational}, with the exception of sign
condition on the solutions in Lemma \ref{lem:criticalAREsolutions},
hold in this setting.

\begin{lem}
	\label{lem:lmI}
	Let $v_{\pm} \in \mathcal{A}$ be the local minimizers of $\Ipm$
	whose existence is given by Lemma \ref{lem:localmin}. Then
	$v_{\pm}$ are local minima also for the functional $I_{\lambda}$,
	i.e. there exists $\delta > 0$ such that
	\begin{equation*}
		I_{\lambda}(v_{\pm}) \leq I_{\lambda}(u), \qquad \forall u \in
		\mathcal{A}, \ \normh{u - v_{\pm}} \leq \delta.
	\end{equation*}
\end{lem}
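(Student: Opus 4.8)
The plan is to compare $I_\lambda$ with the one-sided functional $\Ip$ near the positive local minimum $v_+$ (the case of $v_-$ being symmetric). Recall that $v_+$ is a local minimizer of $\Ip$ with $\|v_+\| \le \rho_-(\lambda,R)$, that $v_+ \ge 0$ (it solves \eqref{eRMCE+}), and that $0 > I_\lambda(v_+) = \Ip(v_+) = \min_{B_{\rho_+(R)}} \Ip$. The key identity is that $\Ip$ and $I_\lambda$ differ only through the negative part of $u$: explicitly, for any $u \in K$,
\begin{equation*}
	I_\lambda(u) - \Ip(u) = -\frac{\lambda}{q}\int_0^R r^{N-1} b(r)\bigl(\abs{u}^q - \abs{u^+}^q\bigr)\,dr - \int_0^R r^{N-1}\bigl(F(r,u) - \ffp(r,u)\bigr)\,dr,
\end{equation*}
and since $\abs{u}^q - \abs{u^+}^q = \abs{u^-}^q$ while $F(r,u) - \ffp(r,u)$ is supported on $\{u < 0\}$ and controlled there, the difference is an expression in $u^-$ alone. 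So it suffices to show that near $v_+$ the contribution of $u^-$ is nonnegative up to higher order, and then combine with the local minimality of $\Ip$ at $v_+$.

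The main obstacle is that the singular term $-\frac{\lambda}{q}\int r^{N-1} b(r)\abs{u^-}^q\,dr$ in $I_\lambda - \Ip$ has the ``wrong'' sign (it is $\le 0$) and, since $q < 2$, it is not controlled by $\|u^-\|^2$ for small $u^-$ — this is precisely why \ref{it:asymmetry} and \ref{it:fprimo} are assumed. The remedy is to exploit that $v_+$ is bounded away from the boundary behavior and, more importantly, that $v_+$ solves $-\bigl(r^{N-1}v_+'/\sqrt{1-\abs{v_+'}^2}\bigr)' = r^{N-1}(\lambda b(r) v_+^{q-1} + f(r,v_+)) > 0$ on $(0,R)$ with $v_+(R) = 0$ and $v_+'(R) < 0$ (Lemma \ref{lem:positivity}), hence $v_+(r) \ge c_0(R-r)$ near $r = R$ and $v_+ > 0$ on $[0,R)$. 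Therefore, for $u$ with $\|u - v_+\| \le \delta$ small, $u$ is close to $v_+$ in $C[0,R]$ (Ascoli–Arzelà applied within $K$, as in Lemma \ref{wlsc} and Lemma \ref{Kclose}), so $u$ is positive on $[0, R-\eta]$ and $u^-$ is supported in a thin collar $[R-\eta, R]$ where $\abs{u(r)} \le \abs{u(r) - v_+(r)} + v_+(r) \le \|u - v_+\|_\infty + c_1\eta$. One then estimates $\int r^{N-1}\abs{u^-}^q$ on this collar and absorbs it; the key point for the $q$-term is that on the collar $\abs{u^-}\le \|u-v_+\|_\infty$, and after using the Sobolev/Poincaré embedding and the smallness of the collar's measure one gets a bound of the form $C\eta^{N}\|u-v_+\|_\infty^q$, which is $o(\|u - v_+\|^2)$ is NOT automatic — so instead I would compare $I_\lambda(u)$ with $\Ip(\bar u)$ for a suitable positive comparison function $\bar u$ close to $u$, namely $\bar u = u^+$ or a small positive perturbation.

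Concretely, the cleanest route: given $u$ with $\|u - v_+\| \le \delta$, write $u = u^+ - u^-$. Then
\begin{equation*}
	I_\lambda(u) = \Psi(u) + \ffpm\text{-terms} \ge \Psi(u^+) - \frac{\lambda}{q}\int r^{N-1}b(r)\abs{u^+}^q - \int r^{N-1}\ffp(r,u^+) + \bigl(\text{error in }u^-\bigr),
\end{equation*}
using that $\Psi(u) \ge \Psi(u^+) $ does not hold in general, so instead use that $\Psi(u) = \int r^{N-1}(1 - \sqrt{1-\abs{u'}^2})$ splits over $\{u > 0\}$ and $\{u < 0\}$ giving $\Psi(u) \ge \Psi(u^+ \cdot \chi) + \text{(collar part)} \ge \Psi(\text{something in } K\text{ near }v_+)$. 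The honest statement is: $I_\lambda(u) \ge \Ip(u^+) - E(u^-)$ where $E(u^-) = \frac{\lambda}{q}\int r^{N-1}b(r)\abs{u^-}^q + \int r^{N-1}\abs{\ffp(r,u) - F(r,u)}\,\chi_{\{u<0\}}$, and $\ffp(r,u) = 0$ for $u < 0$ so $E(u^-) = \frac{\lambda}{q}\int r^{N-1}b(r)\abs{u^-}^q + \int_{\{u<0\}} r^{N-1}\abs{F(r,u)}$. Now $u^+ \in B_{\rho_+(R)}$ for $\delta$ small (since $\|u^+\| \le \|u\| \le \|v_+\| + \delta \le \rho_-(\lambda,R) + \delta < \rho_+(R)$), so $\Ip(u^+) \ge \Ip(v_+) = I_\lambda(v_+)$. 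It then remains to show $E(u^-) \le $ the gain, i.e.\ to show $\Ip(u^+) - \Ip(v_+) \ge E(u^-)$; here I would use that when $u^-\neq 0$, the support of $u^-$ forces $u^+$ to differ from $v_+$ by at least a comparable amount on the collar (because $v_+ \ge c_0(R-r) > 0$ there), so $\|u^+ - v_+\|$ is bounded below in terms of $\|u^-\|_\infty$, and combined with a strict second-order lower bound for $\Ip$ at its strict local minimum $v_+$ — or more robustly, directly estimating $\Ip(u^+) - \Ip(v_+)$ from below using $\Ip(v_+) = \min_{B_{\rho_+(R)}}$ together with the explicit form — one wins provided $\theta, \bar\theta, \tilde\theta$ are large enough; this is exactly where \ref{it:asymmetry}, \ref{it:fprimo} enter to dominate the $\abs{u^-}^q$ term, since on the collar $\abs{u^-} \le \|u - v_+\|_\infty$ is small and $\bar\theta > 4 > q$. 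I expect the delicate bookkeeping of the collar estimate — quantifying how small $\delta$ must be so that $u^+$ lands in $B_{\rho_+(R)}$ and so that $E(u^-)$ is absorbed — to be the main technical point, with everything else being routine consequences of the compact embedding, Lemma \ref{lem:positivity}, and the already-established local minimality of $v_+$ for $\Ip$.
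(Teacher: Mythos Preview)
Your decomposition $I_\lambda(u)=\Ip(u^{+})+\bigl[\Psi\text{ on }\{u<0\}-\tfrac{\lambda}{q}\int r^{N-1}b\abs{u^{-}}^{q}-\int_{\{u<0\}}r^{N-1}F(r,u)\bigr]$ is correct, and you have correctly isolated the obstruction: the term $\tfrac{\lambda}{q}\int r^{N-1}b\abs{u^{-}}^{q}$ with $q<2$. But the remedy you propose does not close. There is no second-order lower bound for $\Ip$ at $v_{+}$ available here (it is only a minimum, not assumed nondegenerate), so you cannot extract a positive quantity from $\Ip(u^{+})-\Ip(v_{+})$ to absorb $E(u^{-})$. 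Nor does shrinking the collar help: even on a collar of width $\eta$, the ratio $\bigl(\int r^{N-1}\abs{u^{-}}^{q}\bigr)\big/\normh{u^{-}}^{2}$ behaves like $\eta^{2}\,\norml[\infty]{u^{-}}^{q-2}$, and since $\norml[\infty]{u^{-}}$ can tend to $0$ independently of $\eta$ (a shallow dip on a fixed-width interval), this ratio is unbounded. Finally, assumption \ref{it:asymmetry} controls $\abs{F(r,-s)-F(r,s)}$, which never appears when you compare $F(r,u)$ with $\ffp(r,u^{+})$; in your scheme that hypothesis has no grip on the $q$-term at all.

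The paper avoids this by \emph{not} taking $u^{+}$ as the comparison function. Instead, on each interval $(r_{2},r_{3})$ where $v_{\delta}<0$ it reflects and flattens: setting $w_{1}=\max_{(r_{2},r_{3})}\abs{v_{\delta}}$, it defines $\bar v_{\delta}=w_{1}$ on a left piece and $\bar v_{\delta}=-v_{\delta}$ on the right piece. The point is that on the modified region $\abs{\bar v_{\delta}}\geq\abs{v_{\delta}}$ \emph{pointwise}, so the $b$-term difference $\tfrac{\lambda}{q}\int r^{N-1}b\bigl(\abs{\bar v_{\delta}}^{q}-\abs{v_{\delta}}^{q}\bigr)$ is $\geq 0$ and the sublinearity of $t\mapsto t^{q}$ becomes harmless. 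The price is a new $F$-term of the form $\int r^{N-1}\bigl(F(r,-v_{\delta})-F(r,v_{\delta})\bigr)$ coming from the reflection, and \emph{this} is exactly what \ref{it:asymmetry} bounds by $\int r^{N-1}\abs{v_{\delta}}^{\bar\theta}$ with $\bar\theta>4$, which is then dominated by the gained kinetic term $\tfrac{1}{2}\int_{\tau}^{\bar r}r^{N-1}\abs{v_{\delta}'}^{2}$ via Sobolev on the thin collar. So the role of \ref{it:asymmetry} is to handle the $F$-term after reflection, not the $b$-term; the $b$-term is handled purely by the monotonicity $\abs{\bar v_{\delta}}\geq\abs{v_{\delta}}$.
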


\begin{proof}
	Recall that by Lemma \ref{lem:locmin} $v_{\pm}$ are local minima
	per $\Ipm$ in the ball of radius $ \rho_{-} (\lambda,R)$ and
	in the ball of radius $ \rho_{+} (R)$. Suppose the conclusion
	does not hold. Then for any $\delta > 0$, there exists $\vdp$ with
	$\normh{u_{+} - \vdp} \leq \delta$ such that
	\begin{equation}
		\label{Iv+}
		I_{\lambda}(u_{+}) > I_{\lambda}(\vdp).
	\end{equation}
	We will show that we can build, starting with $\vdp$ for $\delta$
	small enough, a function $\vdb \geq 0$ such that $\normh{\vdb}
	\leq \normh{\vdp} \leq \rho_{-}(\lambda,R) + \delta <
	\rho_{+}(R)$ and
	\begin{equation*}
		I_{\lambda}(\vdp) > I_{\lambda}(\vdb) = \Ip(\vdb),
	\end{equation*}
	a contradiction which proves the Lemma.

	We know from Lemma \ref{lem:positivity} that $v_{+}'(R) = -\mu <
	0$, and that $v_{+}(r)$ is a strictly decreasing function of $r
	\in (0,R)$. Hence for all $\epsilon \in (0, v_{+}(0))$ there is
	$r_{1} > 0$ such that $v_{+}(r) > \epsilon > 0$ for all $r \in
	(0,r_{1})$. From $v_{+}(r) \approx -\mu(r-R)$ for $r$ close to
	$R$, we also have that for $\epsilon$ small $r_{1} \approx R -
	\frac{\epsilon}{\mu}$.
	
	Take any $\epsilon > 0$ such that $3\epsilon < v_{+}(0)$ and any
	$0 < \tilde{r} < \epsilon$. For all $r \in (\tilde{r}, R)$ we have
	that
	\begin{align*}
		\abs{\vdp(r)- &v_{+}(r)} = \labs{\int_{r}^{R} (\vdp'(s)-v'_{+}(s)) \,
		ds }\\
		&\leq (R-r)^{1/2} \left( \int_{r}^{R} (\vdp'(s)-v'_{+}(s))^{2}
		\, ds \right)^{1/2} \\
		&\leq (R-r)^{1/2} \left( \int_{r}^{R} \left(\frac{s}{\tilde{r}}
		\right)^{N-1} (\vdp'(s)-v'_{+}(s))^{2} \, ds \right)^{1/2} \\
		&\leq \frac{R^{1/2}}{\tilde{r}^{(N-1)/2}} \left( \int_{0}^{R}
		s^{N-1} (\vdp'(s)-v'_{+}(s))^{2} \, ds \right)^{1/2} \leq
		\frac{R^{1/2}\delta}{\tilde{r}^{(N-1)/2}} .
	\end{align*}
	Taking $\delta$ so small so that $\frac{R^{1/2}}{\tilde{r}^{(N -
	1)/2}} \delta < \epsilon$, we have that $\vdp(r) > 0$ for all $r
	\in (\tilde{r}, r_{1})$ and $\vdp(r) > -\epsilon$ for all $r \in
	(\tilde{r}, R)$. 
	
	Since $\abs{v'_{+}(r)} \leq 1$ and $\abs{\vdp'(r)} \leq 1$ we also
	have that
	\begin{equation*}
		\vdp(r) \geq \vdp(\tilde{r}) - \epsilon \geq v_{+}(\tilde{r})
		- 2\epsilon \geq v_{+}(0) - 3\epsilon > 0 \qquad \text{for all
		} r \in (0,\tilde{r})
	\end{equation*}
	and we deduce that $\vdp(r) > 0$ for all $r \in (0,r_{1})$ and
	$\vdp(r) > -\epsilon$ for all $r \in (0, R)$.

	Assuming that $\vdp(r) < 0$ for some $r \in (r_{1}, R)$ we will
	show that for $\delta$ small there is $\vdb$ such that
	$\normh{\vdb} < \rho_{-}(\lambda,R)$, $\vdb(r) \geq 0$ for all $r
	\in (0,R)$ and $I_{\lambda}(\vdp) \geq I_{\lambda}(\vdb)$.

	To construct the new function $\vdb$ assume that for some $r_{1}
	\leq r_{2} < r_{3} \leq R$ we have that $\vdp(r_{2}) = \vdp(r_{3})
	= 0$, that $0 > \vdp(r) > -\epsilon$ for all $r \in I = (r_{2},
	r_{3})$ and that $\vdp(r) > 0$ in $(0,r_{2})$.
	
	Let also $\bar{r} \in I$ be the minimum point of $\vdp$ in $I$ and
	let $w_{1} = \abs{\vdp(\bar{r})} < \epsilon$. We also let $\tau =
	\sup \settc{r < r_{2}}{\vdp(r) > w_{1}}$. We have that $\tau > 0$
	provided $\epsilon$ is small enough.
	
	We now define
	\begin{equation*}
		\vdb(r) = 
		\begin{cases}
			\vdp(r) & r \in (0,\tau) \cup (r_{3}, R) \\
			w_{1} & r \in (\tau,\bar{r})\\
			-\vdp(r) & r \in (\bar{r}, r_{3}) 
		\end{cases}.
	\end{equation*}
	One can immediately check that
	\begin{equation*}
		\normh{\vdb}^{2} = \int_{0}^{R} r^{N-1} \vdp'(r)^{2} \, dr -
		\int_{\tau}^{\bar{r}} r^{N-1} \vdp'(r)^{2} \, dr <
		\normh{\vdp}^{2}
	\end{equation*}
	and hence
	\begin{equation*}
		\normh{\vdb} \leq \normh{\vdp} \leq \rho_{-}(\lambda,R) + 
		\delta \leq \rho_{+}(R).
	\end{equation*}
	Let us now estimate 
	\begin{align*}
		I_{\lambda}(\vdp) - I_{\lambda}(\vdb) = &\int_{\tau}^{\bar{r}}
		r^{N-1} \left(1 - \sqrt{1 - \abs{\vdp'}^{2}}\right) \, dr\\
		& + \frac{\lambda}{q} \int_{\tau}^{\bar{r}} r^{N-1} b(r)
		(w_{1}^{q} - \abs{\vdp(r)}^{q}) \, dr \\
		& + \int_{\tau}^{\bar{r}} r^{N-1} (F(r, w_{1}) - F(r,
		\vdp(r)))\, dr \\
		&+ \int_{\bar{r}}^{r_{3}} r^{N-1}
		(F(r, -\vdp(r)) - F(r,\vdp(r)))\, dr.
	\end{align*}	
	
	Since $\abs{\vdp(r)} \leq w_{1}$ for all $r \in (\tau, r_{3})$ we 
	have that the second term is positive. 
	
	Let us now analyse the other terms. We have that for all $r \in
	(r_{2}, \bar{r})$
	\begin{align*}
		F(r, w_{1}) - F(r, \vdp(r)) &= F(r, w_{1}) - F(r, -\vdp(r)) +
		F(r, -\vdp(r)) - F(r, \vdp(r)) \\
		&\geq F(r, -\vdp(r)) - F(r, \vdp(r)) \geq 
		-\abs{\vdp(r)}^{\bar{\theta}},
	\end{align*}
	where we have used assumption \ref{it:asymmetry} and the fact that
	from \ref{it:AR} follows that $f(r,s) \geq 0$ for all $(r,s) \in 
	[0,R] \times [0,R]$.
	
	Hence we have that
	\begin{equation*}
		I_{\lambda}(\vdp) - I_{\lambda}(\vdb) \geq  \frac{1}{2}
		\int_{\tau}^{\bar{r}} r^{N-1} \abs{\vdp'}^{2} \, dr
		- \int_{r_{2}}^{r_{3}} r^{N-1} \abs{\vdp}^{\bar{\theta}} \,
		dr.
	\end{equation*}
	
	Follows from the Sobolev embedding that
	\begin{align*}
		&w_{1}^{2} \leq C_{1} \int_{r_{2}}^{\bar{r}} r^{N-1}
		\abs{\vdp'}^{2} \, dr \\
		& \int_{r_{2}}^{r_{3}} r^{N-1} \abs{\vdp}^{2} \, dr \leq C_{2}
		\int_{r_{2}}^{r_{3}} r^{N-1} \abs{\vdp'}^{2} \, dr \leq
		C_{2}\rho_{+}(R)^{2}
	\end{align*}
	and hence
	\begin{align*}
		I_{\lambda}(\vdp) - I_{\lambda}(\vdb) &\geq \frac{1}{2}
		\int_{\tau}^{\bar{r}} r^{N-1} \abs{\vdp'}^{2} \, dr -
		w_{1}^{\bar{\theta}-2} \int_{r_{2}}^{r_{3}} r^{N-1}
		\abs{\vdp}^{2} \, dr \\
		&\geq \frac{1}{2} \int_{r_{2}}^{\bar{r}} r^{N-1}
		\abs{\vdp'}^{2} \, dr - \left( C_{1}\int_{r_{2}}^{\bar{r}}
		r^{N-1} \abs{\vdp'}^{2} \, dr
		\right)^{\frac{\bar{\theta}-2}{2}} C _{2} \rho_{+}(R)^{2}
	\end{align*}
	which is positive provided $\int_{r_{2}}^{\bar{r}} r^{N-1}
	\abs{\vdp'}^{2} \, dr$ is small enough since
	$\frac{\bar{\theta}-2}{2} > 1$ by assumption \ref{it:asymmetry}.
	To show that $\int_{r_{2}}^{\bar{r}} r^{N-1} \abs{\vdp'}^{2} \,
	dr$ goes to zero as $\delta$ goes to zero it is enough to remark
	that
	\begin{equation*}
		\int_{r_{2}}^{\bar{r}} r^{N-1} \abs{\vdp'}^{2} \, dr \leq
		(r_{3} -r_{2}) R^{N-1}
	\end{equation*}
	and $r_{3}-r_{2} \leq R - r_{1} \approx \frac{\epsilon}{\mu}$ and
	$\epsilon \to 0$ as $\delta \to 0$.
	
	In case $\vdb$ is not everywhere positive, we can repeat this
	argument. It is clear that after a finite number of steps we can
	stop, by setting $\vdb(r) \equiv 0$ for all $r \in (r_{3}, R)$
	provided $R - r_{3}$ is sufficiently small and we have that
	$\vdb(r) \geq 0$ for all $r \in (0,R)$ and $\normh{\vdb} \leq
	\rho(R)$ is such that
	\begin{equation*}
		I_{\lambda}(v_{+}) > I_{\lambda}(\vdb)
	\end{equation*}
	and the Lemma follows.
\end{proof}

\begin{lem}
	\label{lem:sevenMP}
	There is $\lambda_{***}$ such that for all $\lambda \in (0,
	\lambda_{***})$ there is $\gamma \in C([0,1], \mathcal{A})$ such
	that
	\begin{equation*}
		\gamma(0) = v_{+}, \qquad \gamma(1) = v_{-} \qquad \text{and} 
		\quad \max_{t \in [0,1]} I_{\lambda}(\gamma(t)) < 0.
	\end{equation*}
\end{lem}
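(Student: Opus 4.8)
The plan is to follow the idea of Ambrosetti, Garc\'ia Azorero and Peral \cite{Ambrosetti_GarciaAzorero_Peral_1996}: since $v_{\pm}$ are, by Lemma \ref{lem:lmI}, local minima of $I_{\lambda}$ at negative level, it will be enough to join $v_{+}$ to $-v_{+}$ by a path along which $I_{\lambda}$ stays negative and then to join $-v_{+}$ to $v_{-}$ by another such path. The quantitative engine will be the elementary bound
\begin{equation*}
	I_{\lambda}(u) \leq \normh{u}^{2} - \frac{\lambda b_{0}}{q}
	\norml[N-1,q]{u}^{q}, \qquad u \in K ,
\end{equation*}
which follows from $\Psi(u) \leq \normh{u}^{2}$ (by \eqref{inequality}), from $b \geq b_{0}$ (by \ref{it:b}), and from $F(r,s) \geq 0$ for $\abs{s}\leq R$ (a consequence of \ref{it:AR}), so that the $F$--term only helps. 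Since $1 < q < 2$, the right--hand side is negative on an initial segment of every ray from $0$, and on a whole conical neighbourhood of the origin; this is the ``negative valley'' in which the whole path will be made to live. In particular the set $\{\,u\in K:\normh{u}^{2}<\tfrac{\lambda b_{0}}{2q}\norml[N-1,q]{u}^{q}\,\}$ is contained in $\{I_{\lambda}<0\}$, and I would first record that it is path--connected, being invariant under $u\mapsto su$ for $0<s\leq 1$ (because $s^{2}\leq s^{q}$ there) and locally conical.

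Next I would build the rotation. Fix once and for all a function $\psi\in K$, $\psi\geq 0$, not a scalar multiple of $v_{+}$ (possible, since by Lemma \ref{lem:positivity} $v_{+}>0$ on $(0,R)$, while $\psi$ may be taken with a genuinely different profile), and set, for a small $\eta>0$ to be chosen,
\begin{equation*}
	\gamma_{1}(t)=\cos(\pi t)\,v_{+}+\sin(\pi t)\,\eta\psi,\qquad t\in[0,1].
\end{equation*}
Then $\gamma_{1}(0)=v_{+}$, $\gamma_{1}(1)=-v_{+}$, $\gamma_{1}(t)$ never vanishes, and $\gamma_{1}(t)\in K$ because $\norml[\infty]{v_{+}'}<1$ by Lemma \ref{C1}, so $\abs{\gamma_{1}(t)'(r)}\leq\norml[\infty]{v_{+}'}+\eta\norml[\infty]{\psi'}<1$ for $\eta$ small. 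Along $\gamma_{1}$ the quadratic term $\normh{\gamma_{1}(t)}^{2}\leq(\normh{v_{+}}+\eta\normh{\psi})^{2}$ is small: indeed, from $\Ip(v_{+})<0$ together with $\Psi(v_{+})\geq\tfrac12\normh{v_{+}}^{2}$ and \ref{it:fat0} one gets $\normh{v_{+}}^{2}\lesssim\lambda\,\norml[N-1,q]{v_{+}}^{q}$, hence $\normh{v_{+}}=O(\lambda^{1/(2-q)})$; meanwhile $t\mapsto\norml[N-1,q]{\gamma_{1}(t)}^{q}$ is continuous and strictly positive on $[0,1]$. Choosing $\eta=\eta(\lambda)$ of the right order, one makes the $q$--term dominate the quadratic term and obtains $I_{\lambda}(\gamma_{1}(t))<0$ for $t$ bounded away from $\{0,1\}$; near $t=0$ (resp.\ $t=1$) one uses instead that $I_{\lambda}$ is locally Lipschitz continuous at $v_{+}$ (resp.\ $-v_{+}$), where $\norml[\infty]{v_{+}'}<1$, and is negative there.

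That $I_{\lambda}(-v_{+})<0$ is where \ref{it:asymmetry} enters: $I_{\lambda}(-v_{+})$ differs from $\Ip(v_{+})<0$ exactly by $\int_{0}^{R}r^{N-1}\bigl(F(r,v_{+})-F(r,-v_{+})\bigr)\,dr$, whose absolute value is at most $\int_{0}^{R}r^{N-1}\abs{v_{+}}^{\bar{\theta}}\,dr\leq c\normh{v_{+}}^{\bar{\theta}}$ (using that $\norml[\infty]{v_{+}}\to0$ as $\lambda\to0$, which follows by combining Lemma \ref{estimate2} with the pointwise estimate \eqref{uprime}, so $\abs{v_{+}}\leq\sigma$), whereas $\abs{\Ip(v_{+})}$ is bounded below by a positive multiple of $\lambda^{2/(2-q)}$ (as in the proof of Lemma \ref{lem:localmin}); since $\bar{\theta}>4$ and $\normh{v_{+}}=O(\lambda^{1/(2-q)})$, the former is negligible with respect to the latter for $\lambda$ small. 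The second path, from $-v_{+}$ to $v_{-}$, is built in the same spirit: both $-v_{+}$ and $v_{-}$ are small non--positive functions with negative energy, each of which can be scaled ($u\mapsto su$, $0<s\leq1$) into the path--connected negative conical region above, and the two small functions are then joined there by a straight segment. Concatenating the pieces and setting $\lambda_{***}$ equal to the smallest of the thresholds so produced, together with $\lambda_{**}$, yields the desired $\gamma$.

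The step I expect to be the main obstacle is precisely steering $v_{+}$ (and $v_{-}$) into the negative conical region while keeping $I_{\lambda}<0$: a priori the ray $s\mapsto\Ip(sv_{+})$, $0<s\leq1$, need not stay negative all the way up to $s=1$, so one has to interlock the crude energy bound above (valid once $s$ is not too close to $1$, thanks to $\normh{v_{+}}^{2}\lesssim\lambda\,\norml[N-1,q]{v_{+}}^{q}$) with the local Lipschitz continuity of $I_{\lambda}$ at $v_{+}$ (which, using $\abs{\Ip(v_{+})}\gtrsim\lambda^{2/(2-q)}$, covers the short remaining range of $s$). Making all this fit forces several exponents --- $\tfrac{1}{2-q}$, $\bar{\theta}$, and the decay rate of $\norml[\infty]{v_{+}}$ coming from \ref{it:fprimo} and \eqref{uprime} --- to be mutually compatible, which is exactly where the strength of the quantitative hypotheses \ref{it:fsuper}, \ref{it:asymmetry}, \ref{it:fprimo} and the freedom to take $\alpha\in(2,\tfrac{2N}{N-2})$ close to $2$ (as in Lemma \ref{lem:locmin}) are used.
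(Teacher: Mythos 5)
Your strategy is genuinely different from the paper's, and the step you flag yourself as ``the main obstacle'' is exactly where the sketch breaks down: you never actually establish that $I_{\lambda}(sv_{+}) < 0$ for all $s \in (0,1]$ (and likewise for $v_{-}$). The crude bound $I_{\lambda}(u) \leq \normh{u}^{2} - \frac{\lambda b_{0}}{q}\norml[N-1,q]{u}^{q}$ gives $I_{\lambda}(sv_{+}) < 0$ only when $s^{2-q} < \frac{\lambda b_{0}\norml[N-1,q]{v_{+}}^{q}}{q\normh{v_{+}}^{2}}$. From $\Ip(v_{+}) < 0$ one can indeed deduce $\normh{v_{+}}^{2} \lesssim \frac{\lambda b_{1}}{q}\norml[N-1,q]{v_{+}}^{q}$ for small $\lambda$, but the constant there carries $b_{1}$, not $b_{0}$; hence the crude bound only covers $s$ below a fixed $s_{0} < 1$, and $s_{0}$ can be far from $1$ when $b_{0}/b_{1}$ is small. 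The Lipschitz (or second-order) patch near $s = 1$ then covers an interval of length comparable to $1$ with constants of its own, and nothing in the proposal forces the two intervals to overlap; ``forces several exponents to be mutually compatible'' is exactly what is left unproved.

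The paper eliminates this matching problem with a clean monotonicity argument, which you should compare with. It sets $g(t) = I_{\lambda}(tv_{+})$, writes $g'(t)/t = \ell(t) - \beta(t)$ with $\ell$ increasing, and, using the critical-point identity $g'(1) = 0$ together with \ref{it:fprimo}, shows that $\beta$ is decreasing on $(0,\tilde t)$ with $\tilde t > 1$ once $\lambda$ is small. Then $\ell - \beta$ is strictly increasing on $(0,\tilde t)$, vanishes only at $t = 1$, and is negative on $(0,1)$, so $g$ decreases there and $g(t) < g(0) = 0$ for all $t \in (0,1]$. After that, the path is obtained by scaling $v_{\pm}$ radially down to a small circle in the two-plane spanned by $v_{+}$ and $v_{-}$ and crossing the arc, where the $\lambda\norml[N-1,q]{\cdot}^{q}$ term dominates --- i.e.\ your ``negative valley'' bound is used only near the origin, where it is unconditionally valid. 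Note also that the paper does not invoke \ref{it:asymmetry} in this lemma (it is used in Lemma~\ref{lem:lmI}); your extra waypoint $-v_{+}$, and the estimate of $I_{\lambda}(-v_{+})$ via \ref{it:asymmetry} together with the decay of $\norml[\infty]{v_{+}}$, are avoidable and only add further constants and subcriticality caveats (the Sobolev control of $\int r^{N-1}\abs{v_{+}}^{\bar\theta}\,dr$ needs a subcritical exponent) to a proof whose core step is already missing.
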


\begin{proof}
	Let $g(t) = I_{\lambda}(tv_{+})$. Then
	\begin{multline*}
		\frac{g'(t)}{t} = \int^{R}_{0} \frac{r^{N-1}
		\abs{v_{+}'}^{2}}{\sqrt{1 - \abs{tv_{+}'}^{2}}} \, dr -
		t^{q-2} \int^{R}_{0} r^{N-1} \lambda b(r)\abs{v_{+}}^{q} \, dr
		\\ - \frac{1}{t} \int^{R}_{0} r^{N-1}f(r, tv_{+})v_{+}\, dr.
 	\end{multline*}
	We have that 
	\begin{equation*}
		\ell(t) = \int^{R}_{0} \frac{r^{N-1} \abs{v_{+}'}^{2}}{\sqrt{1
		- \abs{tv_{+}'}^{2}}} \, dr
	\end{equation*}
	is an increasing function of $t > 0$. Let us analyze the function
	\begin{equation*}
		\beta(t) = t^{q-2} \int^{R}_{0} r^{N-1} \lambda
		b(r)\abs{v_{+}}^{q} \, dr + \frac{1}{t} \int^{R}_{0}
		r^{N-1}f(r, tv_{+})v_{+}\, dr.
	\end{equation*}
	We have that $\lim_{t \to 0+} \beta(t) = \lim_{t \to +\infty}
	\beta(t) = +\infty$. We also know that $g'(1) = \ell(1) - \beta(1)
	= 0$ since $v_{+}$ is a critical point for $I_{\lambda}$.
	
	The derivative of $\beta$ is
	\begin{multline*}
		\beta'(t) = (q-2)t^{q-3} \int^{R}_{0} r^{N-1} \lambda
		b(r)\abs{v_{+}}^{q} \, dr \\ 
		- \frac{1}{t^{2}} \int^{R}_{0} r^{N-1} f(r, tv_{+})v_{+}\, dr
		+ \frac{1}{t} \int^{R}_{0} r^{N-1} f'(r, tv_{+})v_{+}^{2} \,
		dr,
	\end{multline*}
	which is negative for $t \in J = (0, \tilde{t})$, for a $\tilde{t}
	> 0$ such that $\beta'(\tilde{t}) = 0$. We deduce that $\beta$ is
	decreasing in $J$ and hence that $\ell(t) - \beta(t)$ has at most
	one zero in $J$. Let us estimate $\tilde{t}$.
	
	From $g'(1) = 0$ we deduce that 
	\begin{equation*}
		\int^{R}_{0} r^{N-1} \lambda b(r)\abs{v_{+}}^{q} \, dr =
		\int^{R}_{0} \frac{r^{N-1} \abs{v_{+}'}^{2}}{\sqrt{1 -
		\abs{v_{+}'}^{2}}} \, dr - \int^{R}_{0}
		r^{N-1}f(r, v_{+})v_{+}\, dr,
	\end{equation*}
	and hence 
	\begin{multline*}
		0 = \beta'(\tilde{t}) = (q-2) \tilde{t}^{q-3} \left(
		\int^{R}_{0} \frac{r^{N-1} \abs{v_{+}'}^{2}}{\sqrt{1 -
		\abs{v_{+}'}^{2}}} \, dr - \int^{R}_{0} r^{N-1}f(r,
		v_{+})v_{+}\, dr \right) \\
		- \frac{1}{\tilde{t}^{2}} \int^{R}_{0} r^{N-1} f(r, \tilde{t}
		v_{+})v_{+}\, dr + \frac{1}{\tilde{t}} \int^{R}_{0} r^{N-1}
		f'(r, \tilde{t}v_{+})v_{+}^{2} \, dr.
	\end{multline*}
	From \ref{it:fprimo} we deduce that for all $s$ small $f(r, s) =
	f'(r, \bar{s})s \leq c\abs{s}^{\tilde{\theta} - 1}$, and hence
	\begin{align*}
		(2 - q) & \tilde{t}^{q-3} \left( \int^{R}_{0} \frac{r^{N-1}
		\abs{v_{+}'}^{2}}{\sqrt{1 - \abs{v_{+}'}^{2}}} \, dr -
		\int^{R}_{0} r^{N-1}f(r, v_{+})v_{+}\, dr \right) \\
		&\leq (2 - q)\tilde{t}^{q-3} \left( \int^{R}_{0} \frac{r^{N-1}
		\abs{v_{+}'}^{2}}{\sqrt{1 - \abs{v_{+}'}^{2}}} \, dr -
		\int^{R}_{0} r^{N-1}f(r, v_{+})v_{+}\, dr \right) \\
		&\qquad + \frac{1}{\tilde{t}^{2}} \int^{R}_{0} r^{N-1} f(r,
		\tilde{t} v_{+})v_{+}\, dr\\
		&= \frac{1}{\tilde{t}} \int^{R}_{0} r^{N-1} f'(r,
		\tilde{t}v_{+})v_{+}^{2} \, dr \leq c\tilde{t}^{\tilde{\theta}-3}
		\int^{R}_{0} r^{N-1} \abs{v_{+}}^{\tilde{\theta}} \, dr
	\end{align*}
	and also
	\begin{align*}
		\tilde{t}^{\tilde{\theta}-q} &\geq (2-q)\frac{ \int^{R}_{0} \frac{r^{N-1}
		\abs{v_{+}'}^{2}}{\sqrt{1 - \abs{v_{+}'}^{2}}} \, dr -
		\int^{R}_{0} r^{N-1}f(r, v_{+})v_{+}\, dr }{c\int^{R}_{0}
		r^{N-1} \abs{v_{+}}^{\tilde{\theta}} \, dr} \\
		&\geq (2-q)\frac{ \int^{R}_{0} r^{N-1} \abs{v_{+}'}^{2} \, dr -
		\int^{R}_{0} r^{N-1}f(r, v_{+})v_{+}\, dr }{c\int^{R}_{0}
		r^{N-1} \abs{v_{+}}^{\tilde{\theta}} \, dr} \\
		&\geq (2-q)\frac{ \int^{R}_{0} r^{N-1} \abs{v_{+}'}^{2} \, dr
		- c\int^{R}_{0} r^{N-1} \abs{v_{+}}^{\tilde{\theta}} \, dr
		}{c\int^{R}_{0} r^{N-1} \abs{v_{+}}^{\tilde{\theta}} \, dr} \\
		&\geq \frac{(2-q)}{c} \left(\frac{ \normh{v_{+}}^{2}}{
		C(N,\tilde{\theta},R)\normh{v_{+}}^{\tilde{\theta}}} - c
		\right) \\
		&\geq \frac{(2-q)}{c} \left(\frac{1}{C(N,\tilde{\theta},R)
		\rho_{-}(\lambda, R)^{\tilde{\theta} -2 }} -
		 c \right).
	\end{align*}
	Taking $\lambda$ small we can make $\tilde{t} > 1$. This implies
	that $t = 1$ is the only zero of $\ell(t) - \beta(t)$ in $J$, that
	$g'(t) = t(\ell(t) - \beta(t))$ is negative in $(0,1)$ and we
	finally find that $I_{\lambda}(t v_{+}) < 0$ for all $t \in
	(0,1]$.
	
	The same argument also apply to show that $I_{\lambda}(tv_{-}) <
	0$ for all $t \in (0,1]$.
	
	We can now build the path $\gamma$. 

	Consider 2-dimensional plane $\Pi_{2}$ containing the
	straight lines $tv_{+}$ and $tv_{-}$ (If $v_{+}$ and $v_{-}$ are
	proportional, take any 2-dimensional plane containing them), and
	take any $v\in \Pi_{2}$ with $\normh{v} = \varepsilon_{2}$. Since
	in the 2-dimensional plane $\Pi_{2}$ all the norms are equivalent
	and $F(r,s) \geq 0$, we have that
	\begin{align*}
		\label{Iv}
		I_{\lambda}(v) &\leq \int^{R}_{0}r^{N-1}\abs{v'}^{2} \, dr -
		\int^{R}_{0} r^{N-1} \frac{\lambda b_{0}}{q}\abs{v}^{q} \,
		dr - \int^{R}_{0} r^{N-1} F(r,v) \, dr \\
		&\leq (\varepsilon_{2})^{2} - \frac{\lambda
		b_{0}}{q}(c^{*})^{q}(\varepsilon_{2})^{q} < 0.
	\end{align*}
	Consider the path $\gamma$ obtained gluing together the segments
	$tv_{-}$ for $t \in (\varepsilon_{2} \normh{v_{-}}^{-1}, 1)$,
	$tv_{+}$, $t \in (\varepsilon_{2} \normh{v^{+}}^{-1}, 1)$ and the
	arc on the circle in $\prod_{2}$ $\settc{v \in \prod_{2}}{\normh{v}
	= \varepsilon_{2}}$. From the above, we know
	\begin{equation*}
		\max_{v\in\bar{\gamma} } I_{\lambda}(v)<0.
	\end{equation*}
\end{proof}

\begin{proof}[Proof of Theorem \ref{thm:ex7}]
	We know from Lemma \ref{lem:lmI} that $v_{\pm}$ are local minima
	for the functional $I_{\lambda}$. We can also assume $v_{\pm}$ are
	isolated local minima. We let
	\begin{equation*}
		b_{\lambda} = \inf_{\gamma \in \Gamma} \max_{t \in [0,1]} 
		I_{\lambda}(\gamma(t))
	\end{equation*}
	where 
	\begin{equation*}
		\Gamma = \settc{\gamma \in C([0,1], \mathcal{A})}{\gamma(0) = 
		v_{-}, \ \gamma(1) = v_{+}, \ \normh{\gamma(t)} \leq \rho(R)}.
	\end{equation*}
	We know from Lemma \ref{lem:sevenMP} that $b_{\lambda} < 0$
	and that $I_{\lambda}$ satisfies the (PS) condition. 
	Then we can apply the Mountain Pass Theorem and find the seventh 
	solution of our problem.
\end{proof}

\section{Multiplicity result involving gradient term}
\label{sec:grad}

In this section, we will prove the results on multiplicity of
solutions for the mean curvature equation with nonlinearity depending
also on the gradient \eqref{GMCE}. We remark that this problem does
not have a variational structure. 

We will follow ideas introduced by De Figueiredo, Girardi, and Matzeu
\cite{DeFigueiredo_Girardi_Matzeu_2004} to deal with semilinear
elliptic equations involving the Laplacian. If we look for radial
solutions of \eqref{GMCE}, we are lead to consider the one-dimensional
mixed boundary problem
\begin{equation}
	\label{RGMCE}
	\begin{cases}
		-\left(\frac{r^{N-1}u'}{\sqrt{1 - \abs{u'}^{2}}} \right)' =
		\lambda b(r) r^{N-1} \abs{u}^{q-2} u + r^{N-1} g(r, u,
		\abs{u'}) &\text{in } (0,R) \\
		u'(0)=u(R)=0 
	\end{cases}
\end{equation}

In order to apply the critical point Theorem, we freeze the derivative
term and study a modified problem which does not depend on the
derivative term. Namely we consider, for each $\omega \in C^{1}[0,R]
\cap K$, the following problem
\begin{equation}
	\label{RGMCEw}
	\begin{cases}
		-\left(\frac{r^{N-1}u'}{\sqrt{1 - \abs{u'}^{2}}} \right)' =
		\lambda b(r) r^{N-1} \abs{u}^{q-2} u + r^{N-1} g(r, u,
		 \abs{\omega' }) &\text{in } (0,R) \\
		u'(0)=u(R)=0 
	\end{cases}.
\end{equation}
It is clear that for all $\omega \in C^{1}[0,R] \cap K$ the function
$f(r,s) \equiv g(r, s, \abs{\omega'(r)})$ satisfies assumptions
\ref{it:fC}, \ref{it:fat0}, \ref{it:AR} and \ref{it:fsuper} if $g$
satisfies assumptions \ref{it:fC*}, \ref{it:fat0*}, \ref{it:AR*} and
\ref{it:fsuper*}.

Then we immediately deduce
\begin{lem}
	\label{lem:ex4grad}
	Let $g$ satisfy \ref{it:fC*}, \ref{it:fat0*}, \ref{it:AR*} and
	\ref{it:fsuper*} and assume \ref{it:b} holds. Let $\omega \in
	C^{1}([0,R]) \cap K$. Then
	\begin{itemize}
		\item For all $\lambda > 0$ \eqref{RGMCEw} has one positive
		solution $u_{\omega}^{+}$ and one negative solution
		$u_{\omega}^{-}$, global minima of the corresponding
		functionals;
	
		\item There is $\lambda_{*}(R)$ such that for all $\lambda \in
		(0, \lambda_{*}(R))$ \eqref{RGMCEw} has one additional
		positive solution $v_{\omega}^{+}$ and one additional negative
		solution $v_{\omega}^{-}$ of mountain pass type.
	
	\end{itemize}
\end{lem}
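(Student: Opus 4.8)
The plan is to reduce the statement entirely to the first two bullets of Theorem~\ref{thm:ex6} by freezing the gradient slot of $g$. Fix $\omega \in C^{1}([0,R]) \cap K$ and set
\[
	f_{\omega}(r,s) := g(r, s, \abs{\omega'(r)}), \qquad (r,s) \in [0,R] \times [-R,R].
\]
First I would verify that $f_{\omega}$ satisfies the hypotheses of Theorem~\ref{thm:ex6}. Since $\omega \in C^{1}([0,R])$, the map $r \mapsto \abs{\omega'(r)}$ is continuous on $[0,R]$, and since $\omega \in K$ it takes values in $[0,1] \subset [-1,1]$; hence $f_{\omega} \in C([0,R]\times[-R,R],\mathbb{R})$ by \ref{it:fC*}, which is \ref{it:fC}. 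The uniformity in $\xi$ built into \ref{it:fat0*} gives $\lim_{s\to 0} f_{\omega}(r,s)/s = 0$ uniformly in $r$, i.e. \ref{it:fat0}; evaluating \ref{it:AR*} at $\xi = \abs{\omega'(r)}$ gives $s f_{\omega}(r,s) > 0$ for $0 < \abs{s}\le R$, i.e. \ref{it:AR}; and writing $F_{\omega}(r,s) = \int_{0}^{s} f_{\omega}(r,t)\,dt = G(r,s,\abs{\omega'(r)})$, assumption \ref{it:fsuper*} yields $F_{\omega}(r,s) \ge a_{1}\abs{s}^{\theta} - a_{2}$ with the same $\theta$, $a_{1}$, $a_{2}$, so \eqref{eq:stimaaa} holds because it is literally \eqref{eq:stimaaa*}; this is \ref{it:fsuper}. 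Finally \ref{it:b} is assumed. With this $f_{\omega}$ the problem \eqref{RGMCEw} is exactly \eqref{RMCE}, so Theorem~\ref{thm:ex6} produces, for every $\lambda > 0$, a positive solution $u^{+}_{\omega}$ and a negative solution $u^{-}_{\omega}$ realized as global minima of the functionals $\Ip$, $\Imm$ associated with $f_{\omega}$ (each of class $C^{2}[0,R]$ by Lemma~\ref{lem:weakclassical}), and a threshold $\lambda_{*}(R) > 0$ such that for $\lambda \in (0,\lambda_{*}(R))$ there are further solutions $v^{+}_{\omega}$, $v^{-}_{\omega}$ of mountain pass type.

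The one genuinely substantive point — and the only place that is not pure bookkeeping — is that the threshold $\lambda_{*}(R)$ can be chosen independently of $\omega$, which is why the statement reuses the same symbol $\lambda_{*}(R)$ as in Theorem~\ref{thm:ex6}. To see this I would inspect the construction of $\lambda_{*}(R)$ in the proof of Lemma~\ref{lem:locmin}: it depends only on $N$, $q$, the constant $b_{1}$ of \ref{it:b}, on $R$, and on the constant $c_{\epsilon}$ coming from the near-zero growth bound $\abs{F(r,s)} \le \tfrac12\epsilon s^{2} + c_{\epsilon}\abs{s}^{\alpha}$. Because \ref{it:fat0*} is uniform in $\xi \in [-1,1]$ and $g$ is bounded on $[0,R]\times[-R,R]\times[-1,1]$, for each $\epsilon > 0$ there is a single $c_{\epsilon}$ with $\abs{G(r,s,\xi)} \le \tfrac12\epsilon s^{2} + c_{\epsilon}\abs{s}^{\alpha}$ for all $(r,s,\xi)$, hence a single $c_{\epsilon}$ valid simultaneously for every $f_{\omega}$; so $\lambda_{*}(R)$ does not depend on $\omega$, as asserted.

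In short, I do not expect a real obstacle: all the analytic work is contained in Theorem~\ref{thm:ex6} and in the lemmas of Sections~\ref{sec:variational}--\ref{sec:grad}, and the content of this lemma is exactly the observation — already flagged in the paragraph preceding its statement — that the assumptions \ref{it:fC*}, \ref{it:fat0*}, \ref{it:AR*}, \ref{it:fsuper*} on $g$ transfer verbatim, and uniformly in $\omega \in C^{1}([0,R]) \cap K$, to the frozen nonlinearity $f_{\omega}$.
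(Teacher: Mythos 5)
Your proposal is correct and follows exactly the paper's route: freeze the gradient slot to obtain $f_{\omega}(r,s)=g(r,s,\abs{\omega'(r)})$, check that \ref{it:fC*}--\ref{it:fsuper*} transfer to \ref{it:fC}--\ref{it:fsuper}, and apply the first two bullets of Theorem~\ref{thm:ex6}. Your extra remark that $\lambda_{*}(R)$ can be chosen independently of $\omega$ because $c_{\epsilon}$ can be taken uniform in $\xi\in[-1,1]$ (by the uniformity in \ref{it:fat0*} plus compactness) is a point the paper leaves implicit, and it is needed later in Section~\ref{sec:grad}, so making it explicit is a genuine improvement.
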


Let us remark that here, as in section \ref{sec:variational}, we
modify the function $g(r,s, \abs{\omega'(r)})$ in the following
way
\begin{equation*}
	\gp{w}(r, s) =
	\begin{cases}
	   g(r, s, \abs{\omega'(r)}) &\text{if } 0 \leq s \leq R
	   \\
	   -g(r, R, \abs{\omega'(r)})(s - R - 1) &\text{if } R < s <
	   R+1 \\
	   0 &\text{if } s < 0 \text{ or } s \geq R+1
	\end{cases}
\end{equation*}
and similarly for the function $\gm{\omega}(r,s)$. It is immediate to
observe that $\gp{\omega}$ satisfies assumptions \ref{it:fC},
\ref{it:fat0}, \ref{it:AR} and \ref{it:fsuper} if $g$ satisfies
assumptions \ref{it:fC*}, \ref{it:fat0*}, \ref{it:AR*} and
\ref{it:fsuper*}.

For simplicity we deal here only the existence of positive solutions,
similar reasoning work for the existence of negative solutions.

We let 
\begin{multline*}
	I^{+}_{\omega,\lambda} (u) = \int^{R}_{0} r^{N-1}(1 - \sqrt{1 -
	\abs{u'}^{2}}) \, dr \\
	- \frac{\lambda }{q} \int^{R}_{0} b(r) r^{N-1} \abs{u^{+}}^{q} \,
	dr - \int^{R}_{0} r^{N-1} G^{+}_{\omega}(r, u)\, dr,
\end{multline*}
where 
\begin{equation*}
	G^{+}_{\omega}(r, s) = \int_{0}^{s} \gp{w}(r, t) \, dt.
\end{equation*}

We now give the following Lemma of the lower bound for the solutions
obtained by Lemma \ref{lem:ex4grad}. 

\begin{lem}
	\label{inf}
	Assume that $\omega \in C^{1}[0,R] \cap K$. Then there exists a
	positive constant $\varrho_{1}(R)$, independent of $\omega$, such that
	\begin{equation*}
		\normh{v^{+}_\omega} \geq \varrho_{1}(R),
	\end{equation*}
	where $v^{+}_\omega$ is the (positive) solution of mountain pass
	type whose existence is stated in Lemma \ref{lem:ex4grad}.
\end{lem}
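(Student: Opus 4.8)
The plan is to pin the mountain-pass level of the frozen functional $I^{+}_{\omega,\lambda}$ between two $\omega$-independent bounds. Set $f(r,s)=g(r,s,\abs{\omega'(r)})$; the key observation is that, when Lemma \ref{lem:locmin} (and likewise Lemmas \ref{lem:globalmin} and \ref{lem:localmin}) is applied to this $f$, every quantity it produces can be chosen independently of $\omega$. Indeed the Sobolev constants $d_{2},d_{q},d_{\alpha}$ depend only on $N,q,\alpha,R$, and the constant $c_{\epsilon}$ in the bound $\abs{G^{+}_{\omega}(r,s)}\le\tfrac12\epsilon s^{2}+c_{\epsilon}\abs{s}^{\alpha}$ on $[0,R]\times[0,R]$ may be taken depending only on $\epsilon$ and $R$, because the limit in \ref{it:fat0*} is uniform in $\xi\in[-1,1]$ and $\abs{\omega'(r)}\le 1$. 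Hence $\lambda_{*}(R)$ and $\rho_{+}(R)=t_{\lambda_{*}(R),R}$ are the same for every $\omega$, and for $\lambda\in(0,\lambda_{*}(R))$ one has $I^{+}_{\omega,\lambda}(u)\ge \rho_{+}(R)^{2}/8>0$ whenever $\normh{u}=\rho_{+}(R)$, while the global minimum $u^{+}_{\omega}$ satisfies $I^{+}_{\omega,\lambda}(u^{+}_{\omega})\le-R^{N}<\min_{B_{\rho_{+}(R)}}I^{+}_{\omega,\lambda}$, so $\normh{u^{+}_{\omega}}>\rho_{+}(R)$. Running the mountain-pass argument of Theorem \ref{thm:ex6} in this setting, every path from $0$ to $u^{+}_{\omega}$ crosses the sphere $\normh{u}=\rho_{+}(R)$, and Proposition \ref{thm:szulkin} gives $I^{+}_{\omega,\lambda}(v^{+}_{\omega})\ge\rho_{+}(R)^{2}/8$.

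For the matching upper bound I would estimate $I^{+}_{\omega,\lambda}$ from above on $K$: by \eqref{inequality} the area term satisfies $\Psi(u)\le\normh{u}^{2}$, the term $-\tfrac{\lambda}{q}\int_{0}^{R}b(r)r^{N-1}\abs{u^{+}}^{q}\,dr$ is non-positive, and $G^{+}_{\omega}(r,s)=\int_{0}^{s}\gp{\omega}(r,t)\,dt\ge0$ for every $(r,s)$, since $\gp{\omega}(r,t)\,t\ge0$ for all $t$ (by \ref{it:AR*} on $[0,R]$ and by the explicit linear piece on $(R,R+1)$). Hence $I^{+}_{\omega,\lambda}(u)\le\normh{u}^{2}$ for all $u\in K$, and evaluating at $u=v^{+}_{\omega}\in K$ and combining with the previous paragraph yields $\normh{v^{+}_{\omega}}^{2}\ge\rho_{+}(R)^{2}/8$. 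Thus $\varrho_{1}(R)=\rho_{+}(R)/(2\sqrt{2})$ works.

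I expect the only real obstacle to be the bookkeeping of the first step: checking carefully that nothing in the construction of $\rho_{+}(R)$, $\lambda_{*}(R)$ and of the local minimum of Lemma \ref{lem:localmin} secretly depends on the frozen profile $\omega$. The uniformity in the gradient variable $\xi$ built into \ref{it:fat0*}, \ref{it:AR*} and \ref{it:fsuper*} is precisely what makes these constants $\omega$-free; once that is granted, both bounds are direct.
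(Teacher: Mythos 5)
Your proof is correct in substance but takes a genuinely different route from the paper. You pin the mountain-pass value between the $\omega$-independent lower bound $\rho_{+}(R)^{2}/8$ coming from the sphere estimate of Lemma \ref{lem:locmin} and the trivial upper bound $\Psi(u)\le \normh{u}^{2}$ on $K$ (after noting $G^{+}_{\omega}\ge 0$ and the sublinear term is negative), obtaining $\normh{v^{+}_{\omega}}^{2}\ge\rho_{+}(R)^{2}/8$. The paper instead works directly from the two scalar identities available for a mountain-pass critical point: the weak Euler--Lagrange equation $\int r^{N-1}|v'|^{2}/\sqrt{1-|v'|^{2}} = \lambda\int r^{N-1}b|v|^{q} + \int r^{N-1}\gp{\omega}(r,v)v$ and the sign condition $I^{+}_{\omega,\lambda}(v)>0$; eliminating the $\lambda b |v|^{q}$ term between them and using the pointwise inequality $\frac{s^{2}}{\sqrt{1-s^{2}}}-q(1-\sqrt{1-s^{2}})\ge(1-\frac{q}{2})s^{2}$ together with $0\le \gp{\omega}(r,s)s\le\epsilon|s|^{2}+C_{\epsilon}|s|^{\alpha}$ gives the polynomial inequality $(1-\tfrac{q}{2}-\epsilon C(N,2,R))\normh{v}^{2}\le C_{\epsilon}C(N,\alpha,R)\normh{v}^{\alpha}$, which forces $\normh{v}$ away from zero since $\alpha>2$. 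The paper's route is a little more self-contained: it only needs $I^{+}_{\omega,\lambda}(v)>0$, not the quantitative level estimate, and it never has to re-examine the construction of $\lambda_{*}(R)$ and $\rho_{+}(R)$. Both arguments ultimately rest on the same fact -- uniformity in the third variable $\xi$ in \ref{it:fat0*}--\ref{it:fsuper*} makes the growth constants $\omega$-free -- which you rightly flag as the main bookkeeping issue. One small caveat in your write-up: you invoke $\min_{B_{\rho_{+}(R)}}I^{+}_{\omega,\lambda}>-R^{N}$ to argue $\normh{u^{+}_{\omega}}>\rho_{+}(R)$, but that inequality is Lemma \ref{lem:localmin} and only holds for $\lambda<\lambda_{**}$, whereas Lemma \ref{lem:ex4grad} provides the mountain-pass solution for all $\lambda<\lambda_{*}(R)$; the fix is to take the explicit profile $\bar{u}_{+}$ of Lemma \ref{lem:globalmin}, with $\normh{\bar{u}_{+}}^{2}=R^{N}/N>\rho_{+}(R)^{2}$, as the endpoint $e$ of the mountain-pass paths, exactly as the paper does when proving Theorem \ref{thm:ex6}.
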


\begin{proof}
	Follows from \ref{it:fat0} (which is a consequence of
	\ref{it:fat0*}) that there is a constant $\tilde{c} > 0$ which
	does not depend upon $\omega$ such that
	\begin{equation}
		\label{f+-}
		\abs{\gp{\omega}(r, s)} \leq \tilde{c} \abs{s}, \qquad \forall
		(r, s) \in [0,+\infty) \times \mathbb{R}.
	\end{equation}
	and also that for all $\epsilon > 0$ and $\alpha \in (2, 
	\frac{2N}{N-2})$ there is a constant 
	$C_{\epsilon}$ (which does not depend on $\omega$) such that for 
	\begin{equation*}
		0 \leq \gp{\omega}(r, s) s \leq \epsilon \abs{s}^{2} +
		C_{\epsilon} \abs{s}^{\alpha}.
	\end{equation*}
	We also remark that 
	\begin{equation*}
		\frac{s^{2}}{\sqrt{1-s^{2}}} - q\left(1 - \sqrt{1 - 
		s^{2}}\right) \geq \left( 1 - \frac{q}{2} \right) s^{2} 
		\qquad \text{for all } s \in (-1,1).
	\end{equation*}
	
	Since $v^{+}_{\omega}$ is a weak solution, it holds (where, for
	simplicity of notation, we let $v = v^{+}_{\omega}$)
	\begin{equation*}
		\int^{R}_{0} \frac{r^{N-1} \abs{v'}^{2}}{\sqrt{1 -
		\abs{v'}^{2}}} \, dr = \int^{R}_{0} r^{N-1} \lambda b(r)
		\abs{v}^{q} \, dr + \int^{R}_{0}r^{N-1} \gp{\omega}(r, v) v \,
		dr,
	\end{equation*}
	while from the fact that $v$ is a mountain pass critical point we
	deduce that
	\begin{multline*}
		I^{+}_{\omega, \lambda} (v) = \int^{R}_{0}
		r^{N-1}(1 - \sqrt{1 - \abs{v'}^{2}}) \, dr \\
		- \frac{\lambda }{q} \int^{R}_{0} r^{N-1} b(r) \abs{v}^{q} \, dr -
		\int^{R}_{0} r^{N-1} G^{+}_{\omega}(r, v) \, dr > 0.
	\end{multline*}
	Using the above, we have that
	\begin{multline*}
		\int^{R}_{0} r^{N-1}(1 - \sqrt{1 - \abs{v'}^{2}}) \, dr -
		\int^{R}_{0} r^{N-1} G^{+}_{\omega}(r, v) \, dr \\
		> \frac{\lambda }{q} \int^{R}_{0} r^{N-1} b(r) \abs{v}^{q} \, dr =
		\frac{1}{q} \int^{R}_{0} \frac{r^{N-1} \abs{v'}^{2}}{\sqrt{1 -
		\abs{v'}^{2}}} \, dr - \frac{1}{q} \int^{R}_{0}r^{N-1}
		\gp{\omega}(r, v) v \, dr,
	\end{multline*}
	and hence
	\begin{align*}
		\left( 1 - \frac{q}{2} \right) &\int^{R}_{0} r^{N-1} 
		\abs{v'}^{2} \, dr \\
		&\leq \int^{R}_{0} \frac{r^{N-1} \abs{v'}^{2}}{\sqrt{1 -
		\abs{v'}^{2}}} \, dr - q \int^{R}_{0} r^{N-1}(1 - \sqrt{1 -
		\abs{v'}^{2}}) \, dr \\
		&< \int^{R}_{0}r^{N-1} \gp{\omega}(r, v) v \, dr - q
		\int^{R}_{0} r^{N-1} G^{+}_{\omega}(r, v) \, dr \\
		&\leq \int^{R}_{0}r^{N-1} \left( \epsilon \abs{v}^{2} +
		C_{\epsilon} \abs{v}^{\alpha}\right) \, dr.
	\end{align*}
	Taking $\epsilon$ small enough so that $1 -\frac{q}{2} -
	\epsilon C(N,2,R) > 0$ we find that
	\begin{equation*}
		\left( 1 - \frac{q}{2} - \epsilon C(N,2,R) \right)
		\normh{v}^{2} \leq C_{\epsilon} C(N,\alpha,R)
		\normh{v}^{\alpha}
	\end{equation*}
	and the Lemma follows.
\end{proof}

A similar result holds for the global minimum of  
functional $I^{+}_{\omega, \lambda}$.
\begin{lem}
	\label{inf*}
	Assume that $\omega \in C^{1}[0,R] \cap K$. Then there exists a
	positive constant $\varrho_{2}(R)$, independent of $\omega$,
	such that
	\begin{equation*}
		\normh{u_\omega^{+}} \geq \varrho_{2}(R),
	\end{equation*}
	where $u_\omega^{+}$ is the (positive) global minimum whose
	existence is stated in Lemma \ref{lem:globalmin}.
\end{lem}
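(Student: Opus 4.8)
The strategy is to sandwich $u^{+}_{\omega}$ between the lower bound for $I^{+}_{\omega,\lambda}$ used in the proof of Lemma~\ref{lem:globalmin} and the fact that the infimum of $I^{+}_{\omega,\lambda}$ is at most $-R^{N}$, a value independent of $\omega$. For the latter: since $\gp{\omega}$ satisfies the analogue of \ref{it:fsuper} with the very same inequality \eqref{eq:stimaaa*}, plugging the barrier $\bar v_{+}(r)=R(1-\frac{r}{R})\in K$ into $I^{+}_{\omega,\lambda}$ and repeating verbatim the computation in Lemma~\ref{lem:globalmin} gives
\[
	I^{+}_{\omega,\lambda}(u^{+}_{\omega}) \;=\; \inf_{u\in\mathcal{A}} I^{+}_{\omega,\lambda}(u) \;\le\; I^{+}_{\omega,\lambda}(\bar v_{+}) \;\le\; -R^{N},
\]
and the right-hand side involves only $a_{1},a_{2},\theta,N,R$ from \ref{it:fsuper*}, not $\omega$.

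For the lower bound I would use $1-\sqrt{1-\abs{u'}^{2}}\ge 0$, estimate the sublinear term by Poincar\'e (Lemma~\ref{estimate3}), $\frac{\lambda}{q}\int_{0}^{R} b(r) r^{N-1}\abs{u^{+}}^{q}\,dr\le \frac{\lambda b_{1}}{q}C(N,q,R)\normh{u}^{q}$, and bound $G^{+}_{\omega}$ from above by integrating \eqref{f+-}: since $\gp{\omega}(r,t)=0$ for $t<0$ and $\abs{\gp{\omega}(r,t)}\le\tilde c\,\abs{t}$, one gets $G^{+}_{\omega}(r,s)\le\frac{\tilde c}{2}(s^{+})^{2}\le\frac{\tilde c}{2}s^{2}$, hence $\int_{0}^{R} r^{N-1} G^{+}_{\omega}(r,u)\,dr\le\frac{\tilde c}{2}C(N,2,R)\normh{u}^{2}$, with $\tilde c$ the $\omega$-independent constant already produced in the proof of Lemma~\ref{inf}. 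This yields, for every $u\in\mathcal{A}$,
\[
	I^{+}_{\omega,\lambda}(u) \;\ge\; -\frac{\lambda b_{1}}{q}C(N,q,R)\normh{u}^{q} - \frac{\tilde c}{2}C(N,2,R)\normh{u}^{2}.
\]
Evaluating at $u=u^{+}_{\omega}$ and combining with the previous display gives $\varphi(\normh{u^{+}_{\omega}})\ge R^{N}$, where $\varphi(t)=\frac{\lambda b_{1}}{q}C(N,q,R)t^{q}+\frac{\tilde c}{2}C(N,2,R)t^{2}$ is continuous, strictly increasing on $[0,+\infty)$, with $\varphi(0)=0$ and $\varphi(t)\to+\infty$. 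Letting $\varrho_{2}(R)>0$ be the unique solution of $\varphi(\varrho_{2}(R))=R^{N}$, monotonicity forces $\normh{u^{+}_{\omega}}\ge\varrho_{2}(R)$, and since $\varphi$ (hence $\varrho_{2}(R)$) does not depend on $\omega$ the lemma follows.

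The argument is essentially routine; the only thing requiring attention is checking that every constant is genuinely $\omega$-free --- the bound $-R^{N}$ because it is extracted from \ref{it:fsuper*} and the fixed barrier $\bar v_{+}$, the constant $\tilde c$ in \eqref{f+-} because this was already established in Lemma~\ref{inf}, and the embedding constants $C(N,q,R)$ and $C(N,2,R)$ because they depend only on $N$, the exponent, and $R$. I expect the mildest technical point to be writing the upper estimate for $G^{+}_{\omega}$ cleanly across all $s\in\mathbb{R}$ (it vanishes for $s\le0$ and is bounded by $\frac{\tilde c}{2}s^{2}$ for $s\ge 0$).
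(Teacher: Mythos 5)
Your proof is correct and follows the same basic strategy as the paper: pair the $\omega$-free upper bound $I^{+}_{\omega,\lambda}(u^{+}_{\omega})\le -R^{N}$ from Lemma~\ref{lem:globalmin} with the observation that $I^{+}_{\omega,\lambda}(u)$ cannot stay that negative when $\normh{u}$ is small, uniformly in $\omega$. Where the paper disposes of the second half with a one-line ``by continuity'' argument by contradiction along a sequence $\omega_{n}$, you make the needed uniformity in $\omega$ explicit through the lower bound $I^{+}_{\omega,\lambda}(u)\ge-\frac{\lambda b_{1}}{q}C(N,q,R)\normh{u}^{q}-\frac{\tilde c}{2}C(N,2,R)\normh{u}^{2}$ and extract a concrete $\varrho_{2}(R)$; this is a slightly more careful and quantitative rendering of the same idea.
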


\begin{proof}
	It is enough to remark that, according to Lemma
	\ref{lem:globalmin}, the global minimum $u_{\omega}^{+}$ is such
	that $I^{+}_{\omega, \lambda}(u_{\omega}^{+}) \leq -R^{N}$.
	
	Suppose that there exist a sequence of functions $\omega_{n} \in
	C^{1}[0,R] \cap K$ such that the corresponding global minima
	$\{u_{\omega_{n}}\}$ are such that $\lim_{n \to +\infty}
	\normh{u_{\omega_{n}}} = 0$. By the continuity of $I^{+}_{\omega, 
	\lambda}$ we immediately deduce that 
	\begin{equation*}
		- R^{N} \geq \lim_{n \to +\infty} I^{+}_{\omega, 
		\lambda}(u_{\omega_{n}}) = 0,
	\end{equation*}
	a contradiction which proves the Lemma.
\end{proof}

\begin{proof}[Proof of Theorem \ref{thm:thg}]
	We now consider the following problem
	\begin{equation}
		\label{eq:induc}
		\tag{$MP_{\omega}^{+}$}
		\begin{cases}
			-\left( \frac{r^{N-1}u'}{\sqrt{1 - \abs{u'}^{2}}}\right)'
			= \lambda b(r) r^{N-1} \abs{u}^{q-2} u^{+} + r^{N-1}
			\gp{\omega}(r, u) &\text{in }(0, R), \\
			u'(0) = u(R) = 0.
		\end{cases}
	\end{equation}
	Choose any $\omega_0 \in C^{1}[0,R] \cap K$ and define by
	induction a sequence $v_{n}$, where $v_{n}$ is a positive mountain
	pass solution of problem \eqref{eq:induc} with $\omega =
	\omega_{n-1}$ (if there is more than one such solution, choose any
	one) and, for $n \in \mathbb{N}$, $\omega_{n} = v_{n-1}$.
	
	In such a way we construct a sequence of positive mountain pass
	solutions $\{v_n\} \subset C^{1}[0,R] \cap K$. Moreover,
	$I^{+}_{\omega_{n-1}, \lambda} (v_{n}) > 0$. 
	
	Similarly we can define by induction a sequence $u_{n}$, where
	$u_{n}$ is a global minimum solution of problem \eqref{eq:induc}
	with $\omega = \omega_{n-1}$ (if there is more than one such
	solution, choose any one) and, for $n \in \mathbb{N}$, $\omega_{n}
	= u_{n-1}$.

	We will show that $\{u_{n}\}$ and $\{v_{n}\}$ are Cauchy
	sequences. We only give the proof for $\{u_{n}\}$.

	We deduce from the fact that $u_{n}$ and $u_{n+1}$ are positive
	weak solutions of \eqref{eq:induc} and recalling the definition of
	$\gp{\omega}$ we have that
	\begin{multline}
		\label{q1}
		\int_{0}^{R} \frac{r^{N-1} u_{n}'(u_{n+1} - u_{n})'}{\sqrt{1 -
		\abs{u'_{n}}^{2}}} \, dr \\
		= \int_{0}^{R} r^{N-1} \left(\lambda b(r) u_{n }^{q - 1} +
		g(r, u_{n}, \abs{u'_{n-1}}) \right)(u_{n+1} - u_{n})\,
		dr
	\end{multline}
	and
	\begin{multline}
		\label{q2}
		\int_{0}^{R} \frac{r^{N-1} u_{n+1}'(u_{n+1} - u_{n})'}{\sqrt{1
		- \abs{u'_{n+1}}^{2}}} \, dr \\
		= \int_{0}^{R} r^{N-1} \left(\lambda b(r) u_{n+1}^{q - 1} +
		g(r, u_{n+1}, \abs{u'_{n}}) \right)(u_{n+1} - u_{n}) \,
		dr.
	\end{multline}
	By Lemma \ref{C1}, we know $\abs{u_n'(r)} < 1$. Then Mean Value
	Theorem shows that
	\begin{multline*}
		\int_{0}^{R}r^{N-1} \frac{\abs{(u_{n+1} - u_{n})'}^{2}}{(1 -
		\abs{\psi'}^{2})^{\frac{3}{2}}} \, dr = \int_{0}^{R} r^{N-1}
		\lambda b(r) (u_{n+1}^{q-1} - u_{n}^{q-1} ) (u_{n+1} - u_{n})
		\, dr \\
		+ \int_{0}^{R} r^{N-1} \left(g(r, u_{n+1}, \abs{u'_{n}}
		) - g(r, u_{n}, \abs{u'_{n-1}} ) \right)(u_{n+1} -
		u_{n}) \, dr ,
	\end{multline*}
	where $\psi' = u'_{n} + \theta(u'_{n+1} - u'_{n})$ for some
	$\theta \in (0,1)$. As a consequence we have
	\begin{align*}
		\int_{0}^{R} r^{N-1} &\abs{(u_{n+1} - u_{n})'}^{2} \, dr \leq
		\int_{0}^{R} r^{N-1} \lambda b(r) ( u_{n+1}^{q-1} -
		u_{n}^{q-1} )(u_{n+1} - u_{n}) \, dr \\
		&\qquad + \int_{0}^{R} r^{N-1} \left( g(r, u_{n+1},
		\abs{u'_{n}} ) - g(r, u_{n}, \abs{u'_{n}})
		\right)(u_{n+1} - u_{n})\, dr \\
		&\qquad + \int_{0}^{R} r^{N-1} \left( g(r, u_{n},
		\abs{u'_{n}}) - g(r, u_{n}, \abs{u'_{n-1}})
		\right)(u_{n+1} - u_{n})\, dr .
	\end{align*}
	From $\abs{x^{q-1} - y^{q-1}} \leq q(\abs{x} + \abs{y})^{q-2}
	\abs{x - y}$ and using assumption \ref{it:lip} we deduce that
	\begin{multline*}
		\normh{u_{n+1} - u_{n}}^{2} \leq \lambda q b_{1} \int_{0}^{R}
		r^{N-1} ( u_{n+1} + u_{n} )^{q-2} \abs{ u_{n+1} - u_{n}}^{2}
		\, dr \\
		+ L_{1} \int_{0}^{R} r^{N-1} \abs{u_{n+1} - u_{n}}^{2} \, dr 
		\\
		+ L_{2} \int_{0}^{R} r^{N-1} \labs{ \abs{u_{n}'} -
		\abs{u_{n-1}'} } \abs{u_{n+1} - u_{n}} \, dr .
	\end{multline*}
	Recalling that $0 \leq u_{n} \leq R$ and that $u'_{n} \leq 0$ (see
	Lemma \ref{lem:positivity}) we get 
	\begin{align*}
		&\normh{u_{n+1} - u_{n}}^{2} \leq (\lambda (2R)^{q-2} q b_{1} +
		L_{1}) \int_{0}^{R} r^{N-1} \abs{u_{n+1} - u_{n}}^{2} \, dr \\
		&\qquad + L_{2}\left(\int_{0}^{R} r^{N-1} \abs{u'_{n}
		-u'_{n-1}}^{2} \, dr \right)^{\frac{1}{2}} \left( \int_{0}^{R}
		r^{N-1} \abs{u_{n+1} - u_{n}}^{2} \, dr
		\right)^{\frac{1}{2}}\\
		&\quad\leq (\lambda (2R)^{q-2}q b_{1} + L_{1} ) C_{2}
		\normh{u_{n+1} - u_{n}}^{2} + \sqrt{C_{2}} L_{2} \normh{
		u_{n}- u_{n-1}}\normh{ u_{n+1}-u_{n}},
	\end{align*}
	where $C_{2} = C(N, 2, R)$ is the Sobolev embedding constant. 
	Let
	$0 < \bar{\lambda} < \lambda_{*}(R)$ be small enough so that
	\begin{equation*}
		\lambda (2R)^{q-2}q b_{1}  C_{2} < \frac{1}{4}
	\end{equation*}
	for all $\lambda\in (0,\bar{\lambda})$. Then follows from assumption
	\eqref{eq:lipconst} that
	\begin{equation*}
		(\lambda (2R)^{q-2}q b_{1} + L_{1}) C_{2} < \frac{1}{2} \qquad
		\hbox{and} \qquad L_{2} \sqrt{C_{2}} < \frac{1}{2}
	\end{equation*}
	and
	\begin{equation*}
		\normh{u_{n+1}-u_n}\leq \frac{L_{2}\sqrt{C_{2}}}{1-(\lambda
		b_{1} (2R)^{q-2}q + L_{1})C_{2}}
		\normh{u_{n}-u_{n-1}}:=k\normh{u_{n}-u_{n-1}},
	\end{equation*}
	with $k<1$ and $\{u_n\}$ is a Cauchy sequence. Then $u_n$
	converges strongly to some $u \in \mathcal{A}$. From Lemma
	\ref{inf}, we know $\normh{u} \geq \varrho_{1}(R)$, hence it is a
	nontrivial solution.
\end{proof}

\end{document}